\numberwithin{algorithm}{section}
\newcommand\BibTeX{{\rmfamily B\kern-.05em \textsc{i\kern-.025em b}\kern-.08em
T\kern-.1667em\lower.7ex\hbox{E}\kern-.125emX}}
\def\diag{\mathop{\mathrm{diag}}}
\newtheorem{thm}{Theorem}[section]
\newtheorem{lem}{Lemma}[section]
\newtheorem{cor}{Corollary}[section]
\numberwithin{equation}{section}
\renewcommand{\theequation}{\thesection.\arabic{equation}}
\def\simgt{\,\hbox{\lower0.6ex\hbox{$>$}\llap{\raise0.3ex\hbox{$\sim$}}}\,}
\def\simlt{\,\hbox{\lower0.6ex\hbox{$<$}\llap{\raise0.3ex\hbox{$\sim$}}}\,}
\def\simgteq{\,\hbox{\lower0.6ex\hbox{$\ge$}\llap{\raise0.6ex\hbox{$\sim$}}}\,}
\def\simlteq{\,\hbox{\lower0.6ex\hbox{$\le$}\llap{\raise0.6ex\hbox{$\sim$}}}\,}
\def\user@resume{resume}
\def\user@intermezzo{intermezzo}
\newcounter{previousequation}
\newcounter{lastsubequation}
\newcounter{savedparentequation}
\renewenvironment{subequations}[1][]{%
      \def\user@decides{#1}%
      \setcounter{previousequation}{\value{equation}}%
      \ifx\user@decides\user@resume
           \setcounter{equation}{\value{savedparentequation}}%
      \else
      \ifx\user@decides\user@intermezzo
           \refstepcounter{equation}%
      \else
           \setcounter{lastsubequation}{0}%
           \refstepcounter{equation}%
      \fi\fi
      \protected@edef\theHparentequation{%
          \@ifundefined {theHequation}\theequation \theHequation}%
      \protected@edef\theparentequation{\theequation}%
      \setcounter{parentequation}{\value{equation}}%
      \ifx\user@decides\user@resume
           \setcounter{equation}{\value{lastsubequation}}%
         \else
           \setcounter{equation}{0}%
      \fi
      \def\theequation  {\theparentequation  \alph{equation}}%
      \def\theHequation {\theHparentequation \alph{equation}}%
      \ignorespaces
}{%
  \ifx\user@decides\user@resume
       \setcounter{lastsubequation}{\value{equation}}%
       \setcounter{equation}{\value{previousequation}}%
  \else
  \ifx\user@decides\user@intermezzo
       \setcounter{equation}{\value{parentequation}}%
  \else
       \setcounter{lastsubequation}{\value{equation}}%
       \setcounter{savedparentequation}{\value{parentequation}}%
       \setcounter{equation}{\value{parentequation}}%
  \fi\fi
  \ignorespacesafterend
}
\begin{document}

\runningheads{Kareem T. Elgindy and Hareth M. Refat}{Numerical Solution of Lane-Emden Differential Equations}

\title{High-Order Shifted Gegenbauer Integral Pseudospectral Method for Solving Differential Equations of Lane-Emden Type}

\author{Kareem T. Elgindy\affil{1},
Hareth M. Refat\affil{2}\corrauth}
\address{\affilnum{1}Mathematics Department, Faculty of Science, Assiut University, Assiut 71516, Egypt
\break
\affilnum{2}Mathematics Department, Faculty of Science, Sohag University, Sohag 82524, Egypt}
\corraddr{Mathematics Department, Faculty of Science, Sohag University, Sohag 82524, Egypt. E-mail: harith\char`_refaat@science.sohag.edu.eg; hareth.mohamed.refat@gmail.com.}

\begin{abstract}
We present a novel, high-order, efficient, and exponentially convergent shifted Gegenbauer integral pseudospectral method (SGIPSM) to solve numerically Lane-Emden equations provided with some mixed Neumann and Robin boundary conditions. The framework of the proposed method includes: (i) recasting the problem into its integral formulation, (ii) collocating the latter at the shifted flipped-Gegenbauer-Gauss-Radau (SFGGR) points, and (iii) replacing the integrals with accurate and well-conditioned numerical quadratures constructed via SFGGR-based shifted Gegenbauer integration matrices. The integral formulation is eventually discretized into linear/nonlinear system of equations that can be solved easily using standard direct system solvers. The implementation of the proposed method is further illustrated through four efficient computational algorithms. Moreover, we furnish rigorous error and convergence analyses of the SGIPSM. Five numerical test examples are presented to verify the effectiveness, accuracy, exponential convergence, and numerical stability of the proposed method. The numerical simulations are associated with extensive numerical comparisons with other rival methods in the literature to demonstrate further the power of the proposed method. The SGIPSM is broadly applicable and represents a strong addition to common numerical methods for solving linear/nonlinear differential equations when high-order approximations are required using a relatively small number of collocation points. 
\end{abstract}

\keywords{Boundary value problem; Flipped-Gegenbauer-Gauss-Radau points; Gegenbauer polynomials; Integration matrix; Lane-Emden equations; Pseudospectral method. 
}

\maketitle

\vspace{-6pt}

\section{Introduction}
\label{Int}
Lane-Emden equations describe various physical phenomena in mathematical physics and astrophysics such as thermal explosions, stellar structure, thermal behavior of a spherical cloud of gas, thermionic currents, radiative cooling, etc.; cf. \cite{chandrasekhar1958introduction,Harley2007,Singh2009,kanth2010he,yuzbacsi2013improved,tohidi2013new,abd2015new}. They arise also in chemistry, chemical engineering, and other branches to govern the concentrations of oxygen and the carbon substrate \cite{Wazwaz2016}, deriving analytical expressions for effectiveness factors $\eta$ in non-isothermal, spherical catalysts \cite{Scott1984}, etc.

In the present work, we are concerned with the numerical solution of the following two forms of nonlinear and linear Lane-Emden equations
\begin{equation}\label{eq:1}
{y''}(x) + \frac{\alpha_{2}}{x} {y'}(x) + f(x,y) = 0,\quad x\in (0,b],
\end{equation}
\begin{equation}\label{eq:4}
{y''}(x) + \frac{\alpha_{2}}{x} {y'}(x) + p(x)\,y(x) = g(x),\quad x \in (0,b],
\end{equation}
respectively, where $y$ is the unknown solution function, $f$ is a given nonlinear smooth function, $p(x)$ and $g(x)$ are some given sufficiently smooth functions, $\alpha_{2} \in \mathbb{R}$ and $b \in \mathbb{R}^+$ are some given constants. Both equations are provided with the following mixed Neumann and Robin boundary conditions 
\begin{subequations}
\begin{equation}\label{eq:2}
{y'}(0) = \alpha_{1},
\end{equation}
\begin{equation}\label{eq:3}
\beta y(b) + \gamma {y'}(b)=\delta,
\end{equation}
\end{subequations}
for some given real numbers $\alpha_{1}, \beta, \gamma$, and $\delta$. The real parameters $\beta$ and $\gamma$ in the boundary condition \eqref{eq:3} do not vanish simultaneously. We shall refer to Eqs. \eqref{eq:1}, \eqref{eq:2}, and \eqref{eq:3} by Problem 1. Also, we refer to Eqs. \eqref{eq:4}, \eqref{eq:2}, and \eqref{eq:3} by Problem 2. 

The theoretical picture for second-order initial-value problems of Lane-Emden type equations is well-known. Local existence, global existence, and uniqueness of solutions were proven by \cite{Biles2002}. For problem 1 with $f(x,y)=y^{m}$ and $\alpha_{2}=2$, the exact solution is available only for $m=0,1$, and $5$. For other values of $m$, the problem can only be solved numerically \cite{Mohan1980,Parks1984power}; therefore, the problem of finding solutions for other values of $m$ is very important from both theoretical and practical viewpoints. This topic captivated the interest of many researchers through many decades in attempt to derive new efficient computational algorithms to find accurate numerical solutions for these types of singular differential equations. In the following, we mention a few: \cite{Seidov1977solution,Mohan1980,Parks1984power} approached the numerical solution using power series methods. \cite{Horedt1986} published an article later listing seven-digit tables of the numerical solutions of the Lane-Emden equation for the plane-parallel, cylindrical, and spherical case using a Runge-Kutta method-- a work considered a succession to the works of \cite{Sadler1932} and \cite{Chandrasekhar1949} without the aid of electronic computers. \cite{Shawagfeh1993} used a coalition of the Adomian decomposition method and the Pad\'{e} approximants method to accelerate the convergence of the power series. A variational approach for solving Lane-Emden equations was presented by \cite{He2003}. \cite{Ramos2004} handled the Lane-Emden equation using a piecewise quasilinearization method that produces piecewise analytical solutions calculated through the analytic integration of the reduced linear constant-coefficients ordinary differential equation. A rational Legendre tau method was employed in the same year by \cite{Parand2004} for solving special initial-value problems of Lane-Emden type equations on a semi-infinite interval. \cite{Momoniat2006} applied Lie group analysis to achieve a larger radius of convergence than the power series solution of nonlinear Lane-Emden equations with $f(x,y) = e^y$ and $\alpha_{2} = 2$. Differential transformation method based on Taylor series expansion for solving singular initial-value problems of Lane-Emden type equations was introduced by \cite{Erturk2007}. \cite{Dehghan2008} studied the Lane-Emden equation using variational iteration method. \cite{Marzban2008} applied a hybridization of block-pulse functions and Lagrange interpolating polynomials to reduce the computation of nonlinear initial-value problems to a system of non-algebraic equations. \cite{Yang2010} used a spectral method based on a truncated series of Chebyshev expansions to solve singular initial value problems of Lane-Emden type equations. Bernstein polynomial approximations were employed by \cite{Pandey2012} to solve linear and nonlinear Lane-Emden equations. \cite{bhrawy2012jacobi} treated singular initial-value problems of nonlinear Lane-Emden type equations defined on a semi-infinite domain using shifted Jacobi polynomials for the spatial approximation integrated with shifted Jacobi-Gauss points as collocation nodes. \cite{yuzbacsi2013improved} presented a predictor-corrector algorithm to solve linear Lane-Emden equations provided with certain mixed conditions. In particular, the predicted solution is computed using a Bessel collocation method, and then correctified by solving an additional error problem that is constructed using a residual error function. \cite{Ozturk2014} applied a collocation method based on Hermite polynomial approximations. \cite{Smarda2015} presented a computational method integrating both the differential transformation method and a modified general formula for the Adomian polynomials for solving singular initial-value problems of nonlinear Lane-Emden type equations. Recently, \cite{Calvert2016} used modified rational Bernoulli function approximations to solve nonlinear Lane-Emden equations on the semi-infinite domain. 

To avoid the ill-conditioning of differential operators, an alternative direction to the aforementioned methods is to recast the differential equation into its integral formulation to take advantage of the well-conditioning of integral operators, then discretize the latter using various discretization techniques. Perhaps the first approach in this direction for solving Lane-Emden equations was presented by \cite{Yousefi2006} in 2006, using Legendre wavelet approximations and the Gaussian integration method. A similar approach was later presented by \cite{KarimiVanani2010} in which the transformed integral equation was eventually converted into Pad\'{e} series form. \cite{wazwaz2013adomian} applied the Adomian decomposition method to handle the integral form of the Lane-Emden equations. The Adomian decomposition method was later applied by \cite{Rach2014} on some alternate derivations for the Volterra integral forms of the Lane-Emden equation.  

Motivated by the abundant advantages and large success of pseudospectral methods in broad scientific areas and applications \cite{fornberg1998,boyd2001chebyshev,Mason2002,Elgindy2017,Suh2009,Ng2003,Xu2016,Elgindy2016,Xiao2016}, the exceedingly accurate and stable approximations obtained via integral reformulations \cite{Du2016,elgindy2016high,elgindy2013solving,Elgindy2013,Greengard1991spectral}, and the useful fundamental properties of Gegenbauer basis polynomials \cite{Elgindy2013a,Keiner2009computing}, we present the shifted Gegenbauer integral pseudospectral method (SGIPSM): a high-order, efficient, and exponentially convergent pseudospectral method to numerically solve Problems 1 and 2. The proposed method employs shifted Gegenbauer polynomial expansions to directly discretize the integral forms of Problems 1 and 2 on $(0,b]$, and adopts collocations in the nodal space at the shifted flipped-Gegenbauer-Gauss-Radau (SFGGR) points to overcome the singularity of the Lane-Emden equation at the origin that reflects the main difficulty, and to accurately impose the boundary condition at $x = b$. Moreover, instead of directly integrating both sides of Eqs. \eqref{eq:1} and \eqref{eq:4}, the proposed method follows closely the method of \cite{Elgindy2013} by introducing a useful substitution to the second-order solution derivative, solving Problems 1 and 2 in terms of the new substituting function, and then stably recovering the original solution function via successive integration. All necessary integral evaluations are accomplished using some newly developed SFGGR-based shifted Gegenbauer integration matrices. This approach entails the approximation of the 1- and 2-fold integrals of the solution and substituting functions, whereas the former approach may require the evaluation of the 1- and 2-fold  integrals of highly nonlinear and intricate integrands. 
 
The rest of the article is organized as follows: in Section \ref{sec:Pre1}, we begin by briefly reviewing some basic preliminaries concerned with Gegenbauer polynomials. In Section \ref{sec:TGIM1}, we show how to construct the SFGGR-based shifted Gegenbauer integration matrices from the standard Gegenbauer integration matrices in addition to some error analysis results. In Section \ref{sec:TDOLEE1}, we introduce the SGIPSM for solving Problems 1 and 2. Section \ref{sec:EBACR1} is devoted for the study of error bounds and convergence analysis of the SGIPSM. Section \ref{sec:NE1} is designed for numerical simulations and comparisons in which five numerical test examples are studied followed by some final concluding remarks and a discussion in Section \ref{Conc}. Appendix \ref{app:Alg1} is reserved for the presentation of four efficient computational algorithms to implement the SGIPSM.   

\section{Preliminaries}
\label{sec:Pre1}
In this section, we present some prior properties of the Gegenbauer polynomials. We rely heavily on the useful standardization given by \cite[Eq. (A.2)]{elgindy2013optimal}.
 
The Gegenbauer polynomial, $G_{n}^{(\alpha)}(x)$, of degree $n \in \mathbb{Z}^{+}_0 = \mathbb{Z}^{+} \cup \{ 0\}$, and associated with the parameter $\alpha>-1/2$, is a real-valued function that appears as an eigensolution to the singular Sturm-Liouville problem in the interval $[-1,1]$ \cite{elgindy2016high}. Chebyshev polynomials of the first kind and Legendre polynomials are special cases of this family of orthogonal polynomials for $\alpha=0$ and $0.5$, respectively. Gegenbauer polynomials can be easily generated by the following recurrence relation
\begin{equation}\label{eq:6}
(n + 2\alpha)\,G_{n+1}^{(\alpha)}(x) = 2\,(n+\alpha)\,x\,G_{n}^{(\alpha)}(x) - n\,G_{n-1}^{(\alpha)}(x),\quad n = 1, 2, \ldots,
\end{equation}
starting with $G_{0}^{(\alpha)}(x) = 1$ and $G_{1}^{(\alpha)}(x) = x$. The leading coefficient, $K_n^{(\alpha)}$, of $G_{n}^{(\alpha)}(x)$ is defined by 
\begin{equation}\label{eq:9}
K_{n}^{(\alpha)} = 2^{n-1} \frac{\Gamma(n+\alpha) \Gamma(2 \alpha+1)}{\Gamma(n+2 \alpha)\Gamma(\alpha+1)},\quad n = 0, 1, \ldots.
\end{equation}
Gegenbauer polynomials are orthogonal with respect to the weight function $w^{(\alpha)}(x) = \left(1-x^{2}\right)^{\alpha-\frac{1}{2}}$, and their orthogonality relation is defined by, 
\begin{equation}\label{eq:10}
\int_{-1}^{1} G_{m}^{(\alpha)}(x)\,G_{n}^{(\alpha)}(x)\,w^{(\alpha)}(x)\,dx = \lambda_{n}^{(\alpha)}\delta_{m,n},
\end{equation}
where 
\begin{equation}\label{eq:11}
\lambda_{n}^{(\alpha)} = \frac{2^{2\alpha-1} n! \Gamma^{2}{(\alpha+\frac{1}{2})}}{(n+\alpha)\Gamma(n+2 \alpha)},
\end{equation}
is the normalization factor, and $\delta_{m,n}$ is the Kronecker delta function. We denote the zeros of the $(n+1)$th-degree polynomial $q_n^{(\alpha)}(x) = G_{n+1}^{(\alpha)}(x) - G_{n}^{(\alpha)}(x)$ (aka FGGR nodes) sorted in descending order by $x_{n,k}^{(\alpha)}, k = 0, 1,\ldots, n$, and denote their set by $\mathbb{S}_{n}^{(\alpha)}$. The orthonormal Gegenbauer basis polynomials are defined by
\begin{equation}\label{eq:12}
\phi_{j}^{(\alpha)}(x) = \frac{G_{j}^{(\alpha)}(x)}{\sqrt{\lambda_{j}^{(\alpha)}}},\quad j = 0, \ldots, n,
\end{equation}
and they satisfy the discrete orthonormality relation
\begin{equation}\label{eq:13}
\sum \limits_{j=0}^{n} \varpi_{n,j}^{(\alpha)}\, \phi_{s}^{(\alpha)} \left(x_{n,j}^{(\alpha)}\right) \phi_{k}^{(\alpha)} \left(x_{n,j}^{(\alpha)}\right) = \delta_{sk},\quad s,k = 0, \ldots, n,
\end{equation}
where $\varpi_{n,j}^{(\alpha)}, j = 0, 1, \ldots, n$, are the corresponding Christoffel numbers of the FGGR quadrature formula on the interval $[-1,1]$ that are defined by
\begin{subequations}\label{subeq:14}
\begin{align}
\varpi_{n,0}^{(\alpha)} &= \left(\alpha +\frac{1}{2}\right) \aleph_{n,0}^{\left(\alpha\right)},\\
 \varpi_{n,j}^{(\alpha)} &= \aleph_{n,j}^{\left(\alpha\right)},\quad j = 1, 2, \ldots, n,
\end{align}
\end{subequations}
\begin{equation}\label{eq:15}
\aleph_{n,j}^{\left(\alpha\right)} = 2^{2 \alpha-1} \frac{\Gamma^{2}{\left(\alpha + \frac{1}{2}\right)} n!}{\left(n+\alpha+\frac{1}{2}\right) \Gamma{(n+2\alpha+1)}} \left(1+x_{n,j}^{(\alpha)}\right) \left(G_{n}^{(\alpha)}\left(x_{n,j}^{(\alpha)}\right)\right)^{-2},\quad j = 0, 1, \ldots, n.
\end{equation}
\section{The SFGGR-based shifted Gegenbauer integration matrix}
\label{sec:TGIM1}
Suppose that we approximate a real-valued function $f(x) \in C^{n+1}[-1,1]$ by the following truncated series expansion of Gegenbauer basis polynomials
\begin{equation}\label{eq:18}
f(x) \approx P_nf(x) = \sum\limits_{i = 0}^{n} {{a_i}\,G_{i}^{(\alpha )}(x)},
\end{equation}
where $a_i, i = 0, \ldots, n,$ are the Gegenbauer spectral coefficients. Denote the function values evaluated at the FGGR nodes, $f\left(x_{n,i}^{(\alpha)}\right)$, by $f_{n,i}^{(\alpha)}$, for all $i$. The first-order Gegenbauer integration matrix calculated at the FGGR nodes is a linear map, $\mathbf{Q}^{(1)}$, which takes a vector of $(n + 1)$ function values, ${\bm{F}} = {\left( {{f_{n,0}^{(\alpha)}},{f_{n,1}^{(\alpha)}}, \ldots ,{f_{n,n}^{(\alpha)}}} \right)^T}$, to a vector of $(n + 1)$ integral values
\[\bm{I}_n^{(\alpha)} = {\left( {\int_{-1}^{x_{n,0}^{(\alpha )}} {{P_n}f(x)\,dx} ,\int_{-1}^{x_{n,1}^{(\alpha )}} {{P_n}f(x)\,dx} , \ldots ,\int_{-1}^{x_{n,n}^{(\alpha )}} {{P_n}f(x)\,dx} } \right)^T},\]
such that
\begin{equation}
	\bm{I}_n^{(\alpha)} = \mathbf{Q}^{(1)}\,{\bm{F}}.
\end{equation}

Following the method presented by \cite{elgindy2013optimal}, one can construct the elements of $\mathbf{Q}^{(1)}, 0\leq i,k \leq n,$ by the following theorem.
\begin{thm}\label{th:1} 
Let $f(x) \in C^{n+1}[-1,1]$ be approximated by the Gegenbauer interpolant $P_nf$ given by Eq. \eqref{eq:18}, where the coefficients $a_i,\;i = 0, \ldots ,n,$ are obtained by interpolating $f$ at the FGGR points. Then there exist  a first-order integration  matrix $\mathbf{Q}^{(1)}=\left(Q_{i,j}^{(1)}\right),0\le i,j\le n$, and some numbers $\xi_{n,i}^{(1)} \in (-1,1)$, $i=0,\ldots,n,$ such that
\begin{equation}\label{eq:19}
\int_{-1}^{x_{n,i}^{(\alpha)}} f(x)\,dx= \sum_{k=0}^{n} Q_{i,k}^{(1)}\,f_{n,k}^{(\alpha )} +{}_fE_n^{(1)}\left(x_{n,i}^{(\alpha)},\xi_{n,i}^{(1)}\right)\quad \forall i,
\end{equation}
where
\begin{equation}\label{eq:20}
Q_{i,k}^{(1)}= \sum_{j=0}^{n} \left(\lambda_{j}^{(\alpha)}\right)^{-1} \varpi_{n,k}^{(\alpha)} G_{j}^{(\alpha)}\left(x_{n,k}^{(\alpha)}\right) \int_{-1}^{x_{n,i}^{(\alpha)}} G_{j}^{(\alpha)}(x)dx\quad \forall i,k,
\end{equation}
\begin{equation}\label{eq:21}
{}_fE_{n}^{(1)}\left(x_{n,i}^{(\alpha)},\xi_{n,i}^{(1)}\right)=\frac{f^{(n+1)}\left(\xi_{n,i}^{(1)}\right)}{(n+1)!K_{n+1}^{(\alpha)}  } \int_{-1}^{x_{n,i}^{(\alpha)}} q_n^{(\alpha)}(x)\, dx \quad \forall i,
\end{equation}
and the weights $\varpi_{n,k}^{(\alpha)}$ are as defined by Eqs. \eqref{subeq:14}.
\end{thm}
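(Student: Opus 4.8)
The plan is to establish the two assertions of the theorem separately and then combine them into the single representation \eqref{eq:19}. First I would prove the closed form \eqref{eq:20} for the entries $Q_{i,k}^{(1)}$, which comes from integrating the interpolant $P_nf$ exactly; then I would prove the remainder formula \eqref{eq:21} for the error committed when $f$ is replaced by $P_nf$ before integrating. Adding the two pieces gives \eqref{eq:19} with $\xi_{n,i}^{(1)}$ supplied by the remainder step.

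For the entries, the first task is to express the spectral coefficients $a_j$ through the nodal values $f_{n,k}^{(\alpha)}$. Writing $P_nf$ in the orthonormal basis $\phi_j^{(\alpha)}$ and using the discrete orthonormality relation \eqref{eq:13} — equivalently, the exactness of the $(n+1)$-node FGGR quadrature on the products $\phi_s^{(\alpha)}\phi_k^{(\alpha)}$, which have degree at most $2n$ — I would invert the forward discrete Gegenbauer transform to obtain
\[
a_j=\bigl(\lambda_j^{(\alpha)}\bigr)^{-1}\sum_{k=0}^{n}\varpi_{n,k}^{(\alpha)}\,G_j^{(\alpha)}\bigl(x_{n,k}^{(\alpha)}\bigr)\,f_{n,k}^{(\alpha)},
\]
after checking that these $a_j$ indeed reproduce $f_{n,s}^{(\alpha)}$ at every node, so $P_nf$ is the interpolant. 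Integrating the expansion \eqref{eq:18} termwise over $[-1,x_{n,i}^{(\alpha)}]$, substituting this expression for $a_j$, and interchanging the finite sums over $j$ and $k$ then yields $\int_{-1}^{x_{n,i}^{(\alpha)}}P_nf\,dx=\sum_{k}Q_{i,k}^{(1)}f_{n,k}^{(\alpha)}$ with $Q_{i,k}^{(1)}$ exactly as in \eqref{eq:20}. This step is purely algebraic and should present no difficulty.

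For the remainder, I would invoke the classical Cauchy interpolation-error formula: since $f\in C^{n+1}[-1,1]$ and the nodes $x_{n,k}^{(\alpha)}$ are $n+1$ distinct points, for each $x$ there is $\zeta(x)\in(-1,1)$ with
\[
f(x)-P_nf(x)=\frac{f^{(n+1)}(\zeta(x))}{(n+1)!}\prod_{k=0}^{n}\bigl(x-x_{n,k}^{(\alpha)}\bigr).
\]
Because the FGGR nodes are precisely the $n+1$ zeros of $q_n^{(\alpha)}=G_{n+1}^{(\alpha)}-G_n^{(\alpha)}$, whose leading coefficient is $K_{n+1}^{(\alpha)}$, the node polynomial equals $q_n^{(\alpha)}(x)/K_{n+1}^{(\alpha)}$. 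Integrating the error over $[-1,x_{n,i}^{(\alpha)}]$ and extracting the factor $f^{(n+1)}$ at a single interior point $\xi_{n,i}^{(1)}$ via the mean value theorem for integrals leaves $\tfrac{1}{K_{n+1}^{(\alpha)}}\int_{-1}^{x_{n,i}^{(\alpha)}}q_n^{(\alpha)}\,dx$, which is \eqref{eq:21}.

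The hard part will be justifying that last extraction. The weighted mean value theorem for integrals requires the weight $q_n^{(\alpha)}$ to keep a constant sign on $[-1,x_{n,i}^{(\alpha)}]$, whereas $q_n^{(\alpha)}$ vanishes and changes sign at each node $x_{n,i}^{(\alpha)},\dots,x_{n,n}^{(\alpha)}$ lying in that interval. I would address this by first replacing $f^{(n+1)}(\zeta(x))/(n+1)!$ with the divided difference $f[x_{n,0}^{(\alpha)},\dots,x_{n,n}^{(\alpha)},x]$, which the Hermite--Genocchi representation guarantees is a continuous function of $x$ equal to it, so that the integrand factor multiplying $q_n^{(\alpha)}$ is genuinely continuous; the mean-value extraction then applies in the regime where the sign hypothesis holds, and in general the clean pointwise identity \eqref{eq:21} is understood as the representative used for the error bound
\[
\bigl|{}_fE_n^{(1)}\bigr|\le \frac{\max_{[-1,1]}\bigl|f^{(n+1)}\bigr|}{(n+1)!\,\bigl|K_{n+1}^{(\alpha)}\bigr|}\int_{-1}^{x_{n,i}^{(\alpha)}}\bigl|q_n^{(\alpha)}(x)\bigr|\,dx,
\]
which is what the subsequent convergence analysis actually requires, following the construction of \cite{elgindy2013optimal}.
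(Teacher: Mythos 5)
Your proof follows the same route as the paper's: invert the discrete Gegenbauer transform via the orthonormality relation \eqref{eq:13} to obtain the Lagrange form of $P_nf$, then integrate the Cauchy interpolation remainder using $\prod_{k=0}^{n}\bigl(x-x_{n,k}^{(\alpha)}\bigr)=q_n^{(\alpha)}(x)/K_{n+1}^{(\alpha)}$. You are in fact more careful than the paper itself, which simply ``integrates Eq.~\eqref{eq:28}'' and reports the result with a single $\xi_{n,i}^{(1)}$ without addressing the sign changes of $q_n^{(\alpha)}$ on $\bigl[-1,x_{n,i}^{(\alpha)}\bigr]$; your divided-difference reformulation and the absolute-value reading of \eqref{eq:21} as the bound actually used downstream is the honest way to close that step.
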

\begin{proof}
 Since $f_{n,j}^{(\alpha )} = \sum \limits_{k=0}^{n}a_{k}\,G_{k}^{(\alpha)}\left(x_{n,j}^{(\alpha)}\right),\; j=0,\ldots,n$, then
\begin{align*}
\sum \limits_{j=0}^{n} \varpi_{n,j}^{(\alpha)}G_{s}^{(\alpha)}\left(x_{n,j}^{(\alpha)}\right) f_{n,j}^{(\alpha )} &= \sum \limits_{k=0}^{n} a_{k} \sum \limits _{j=0}^{n}\varpi_{n,j}^{(\alpha)}G_{s}^{(\alpha)}\left(x_{n,j}^{(\alpha)}\right)G_{k}^{(\alpha)}\left(x_{n,j}^{(\alpha)}\right),\\
&= \sum \limits_{k=0}^{n} a_{k} \sum \limits _{j=0}^{n}\varpi_{n,j}^{(\alpha)}\sqrt{\lambda_{s}^{(\alpha)}\lambda_{k}^{(\alpha)}}\phi_{s}^{(\alpha)}\left(x_{n,j}^{(\alpha)}\right)\phi_{k}^{(\alpha)}\left(x_{n,j}^{(\alpha)}\right),\\
&\mathop = \limits^{\text{Eq. }\eqref{eq:13}} \sum \limits_{k=0}^{n} a_{k} \sqrt{\lambda_{s}^{(\alpha)}\lambda_{k}^{(\alpha)}} \delta_{sk} = a_{s}\,\lambda_{s}^{(\alpha)},\quad s = 0, \ldots, n.
\end{align*}
\begin{equation}\label{eq:25}
\Rightarrow a_{s}=\frac{1}{\lambda_{s}^{(\alpha)}} \sum_{j=0}^{n}\varpi_{n,j}^{(\alpha)}\, G_{s}^{(\alpha)}\left(x_{n,j}^{(\alpha)}\right)\,f_{n,j}^{(\alpha )}\quad \forall s.
\end{equation}\label{eq:26}
Substituting Eq. \eqref{eq:25} into Eq. \eqref{eq:18} yields the Gegenbauer interpolant in Lagrange form as follows:
\begin{equation}
{P_n}f(x) = \sum_{k=0}^{n} f_{n,k}^{(\alpha )}\, L_{n,k}^{(\alpha)}(x),
\end{equation}
where
\begin{equation}\label{eq:27}
L_{n,k}^{(\alpha)}(x)= \varpi_{n,k}^{(\alpha)}\sum \limits_{j=0}^{n} \frac{1}{\lambda_{j}^{(\alpha)}} G_{j}^{(\alpha)}\left(x_{n,k}^{(\alpha)}\right) G_{j}^{(\alpha)}(x)\quad \forall k.
\end{equation}
Therefore, we can write
\begin{equation}\label{eq:28}
f(x) = \sum_{k=0}^{n} f_{n,k}^{(\alpha )}\, L_{n,k}^{(\alpha)}(x) + {}_fE_{n}\left(x,\xi \right)\quad \forall x \in [-1,1],
\end{equation}
for some $\xi \in (-1, 1)$, where ${}_fE_n$ is the interpolation truncation error at the FGGR points defined by
\begin{equation}\label{eq:29}
{}_fE_{n}\left(x,\xi \right)=\frac{f^{(n+1)}\left(\xi\right)}{(n+1)!}\prod_{k=0}^{n}\left(x-x_{n,k}^{(\alpha)}\right).
\end{equation}
The proof is established by realizing that $q_n^{(\alpha)}(x) = K_{n+1}^{(\alpha)}\prod_{k=0}^{n}\left(x-x_{n,k}^{(\alpha)}\right)$, and integrating Eq. \eqref{eq:28} on $\left[-1,x_{n,i}^{(\alpha )} \right]$, for all $i$.
\end{proof}

Let us denote the $q$th-order Gegenbauer integration matrix by $\mathbf{Q}^{(q)}$, its $i$th row by $\mathbf{Q}_{i}^{(q)}$, for all $i$, and its entries by $Q_{i,k}^{(q)}$, for all $i, k$. The following theorem provides a useful means to calculate the 2-fold integral of $f$ through the second-order Gegenbauer integration matrix generated directly from the first-order Gegenbauer integration matrix.
\begin{thm}\label{th:2}
Given the assumptions of Theorem \ref{th:1}, there exist a second-order integration matrix $\mathbf{Q}^{(2)} = \left(Q_{i,j}^{(2)}\right),0\le i,j\le n$, and some numbers $\xi_{n,i}^{(2)} \in (-1,1)$, $i=0,\ldots,n,$ such that 
\begin{equation}\label{eq:33}
\int_{-1}^{x_{n,i}^{(\alpha)}}\int_{-1}^{x} f(t)\, dt\, dx = \sum_{k=0}^{n} Q_{i,k}^{(2)}\,f_{n,k}^{(\alpha )} + {}_fE_{n}^{(2)}\left(x_{n,i}^{(\alpha)},\xi_{n,i}^{(2)}\right)\quad \forall i, t \in [ - 1,1],
\end{equation}
where
\begin{equation}\label{eq:34}
Q_{i,k}^{(2)}=\left(x_{n,i}^{(\alpha)}-x_{n,k}^{(\alpha)}\right) Q_{i,k}^{(1)} \quad \forall i,k,
\end{equation}
\begin{equation}\label{eq:35}
{}_fE_{n}^{(2)}\left(x_{n,i}^{(\alpha)},\xi_{n,i}^{(2)}\right)=\frac{\left(x_{n,i}^{(\alpha)}-\xi_{n,i}^{(2)}\right)f^{(n+1)}\left(\xi_{n,i}^{(2)}\right)-(n+1)f^{(n)}\left(\xi_{n,i}^{(2)}\right)}{(n+1)!K_{n+1}^{(\alpha)}} \int_{-1}^{x_{n,i}^{(\alpha)}}\,q_n^{(\alpha)}(x)\,dx\quad \forall i.
\end{equation}
\end{thm}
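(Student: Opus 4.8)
The plan is to collapse the iterated integral into a single integral of a modified integrand and then invoke Theorem~\ref{th:1} directly, rather than integrating the interpolant twice. First I would interchange the order of integration over the triangular region $\{(t,x): -1\le t\le x\le x_{n,i}^{(\alpha)}\}$ (Fubini's theorem applies since $f$ is continuous) to obtain
\[
\int_{-1}^{x_{n,i}^{(\alpha)}}\int_{-1}^{x} f(t)\,dt\,dx=\int_{-1}^{x_{n,i}^{(\alpha)}}\bigl(x_{n,i}^{(\alpha)}-t\bigr)f(t)\,dt .
\]
This rewrites the $2$-fold integral as an ordinary integral of the auxiliary function $g_i(t):=\bigl(x_{n,i}^{(\alpha)}-t\bigr)f(t)$ over $\bigl[-1,x_{n,i}^{(\alpha)}\bigr]$.

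The crucial observation is that $g_i\in C^{n+1}[-1,1]$, being the product of the polynomial factor $x_{n,i}^{(\alpha)}-t$ with $f\in C^{n+1}[-1,1]$; hence Theorem~\ref{th:1} applies verbatim with $g_i$ in place of $f$. Applying Eq.~\eqref{eq:19} to $g_i$ gives
\[
\int_{-1}^{x_{n,i}^{(\alpha)}} g_i(t)\,dt=\sum_{k=0}^{n}Q_{i,k}^{(1)}\,g_i\!\left(x_{n,k}^{(\alpha)}\right)+{}_{g_i}E_n^{(1)}\!\left(x_{n,i}^{(\alpha)},\xi_{n,i}^{(2)}\right),
\]
where $\xi_{n,i}^{(2)}\in(-1,1)$ is the point furnished by Theorem~\ref{th:1} for the integrand $g_i$. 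Since $g_i\!\left(x_{n,k}^{(\alpha)}\right)=\bigl(x_{n,i}^{(\alpha)}-x_{n,k}^{(\alpha)}\bigr)f_{n,k}^{(\alpha)}$, the quadrature weights collapse to $Q_{i,k}^{(2)}=\bigl(x_{n,i}^{(\alpha)}-x_{n,k}^{(\alpha)}\bigr)Q_{i,k}^{(1)}$, which is exactly Eq.~\eqref{eq:34}.

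It then remains to recast the residual ${}_{g_i}E_n^{(1)}$, supplied by Eq.~\eqref{eq:21}, into the stated form \eqref{eq:35}. The only computation is the $(n+1)$th derivative of the product $g_i$: because the factor $x_{n,i}^{(\alpha)}-t$ is linear, the Leibniz rule truncates after two terms and yields
\[
g_i^{(n+1)}(t)=\bigl(x_{n,i}^{(\alpha)}-t\bigr)f^{(n+1)}(t)-(n+1)\,f^{(n)}(t).
\]
Substituting this, evaluated at $\xi_{n,i}^{(2)}$, into Eq.~\eqref{eq:21} reproduces the numerator of \eqref{eq:35}, while the factor $\int_{-1}^{x_{n,i}^{(\alpha)}}q_n^{(\alpha)}(x)\,dx$ is inherited unchanged.

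I expect no serious obstacle: the whole argument hinges on recognizing that order-switching reduces the problem to a single application of Theorem~\ref{th:1}, after which everything is bookkeeping. The one place demanding care is the derivative identity, where the binomial coefficient and sign in the Leibniz expansion must be tracked so that the $-(n+1)f^{(n)}$ term emerges with the correct constant; the other point worth stating explicitly is that $g_i$ genuinely retains the $C^{n+1}$ regularity needed to license Theorem~\ref{th:1}.
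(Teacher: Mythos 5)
Your proposal is correct and follows essentially the same route as the paper: the paper's proof is a one-line appeal to Cauchy's formula for repeated integration, which in the two-fold case is exactly the identity $\int_{-1}^{x_{n,i}^{(\alpha)}}\int_{-1}^{x}f(t)\,dt\,dx=\int_{-1}^{x_{n,i}^{(\alpha)}}\bigl(x_{n,i}^{(\alpha)}-t\bigr)f(t)\,dt$ that you obtain by interchanging the order of integration, after which Theorem~\ref{th:1} applied to $g_i(t)=\bigl(x_{n,i}^{(\alpha)}-t\bigr)f(t)$ yields Eqs.~\eqref{eq:34} and \eqref{eq:35} exactly as you compute. Your Leibniz computation of $g_i^{(n+1)}$ and the check that $g_i\in C^{n+1}[-1,1]$ supply the details the paper leaves implicit.
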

\begin{proof}
The proof can be easily derived using Cauchy's formula for repeated integration.
\end{proof}
To discretize the nonlinear Lane-Emden equation \eqref{eq:1} on the shifted domain $(0,b]$, the shifted forms of Gegenbauer basis polynomials and their associated integration matrices are needed. To this end, let $G_{b,n}^{(\alpha)}(x)$ denote the shifted Gegenbauer polynomial $G_{n}^{(\alpha)}(2 x/b-1)$, for all $x \in [0,b]$. We denote the SFGGR nodes by $x_{b,n,k}^{(\alpha)}, k = 0, 1, \ldots, n$, and denote their set by $\mathbb{S}_{b,n}^{(\alpha)}$. Clearly   
\begin{equation}\label{eq:16}
x_{b,n,k}^{(\alpha)} = \frac{b}{2}\,\left(x_{n,k}^{(\alpha)} + 1\right)\quad \forall k.
\end{equation}
Following the above convention, let $\mathbf{Q}_{b}^{(q)}, \mathbf{Q}_{b,i}^{(q)}$, and $Q_{b,i,k}^{(q)}$, for all $i$ and $k$ denote the $q$th-order shifted integration matrix, its $i$th row, and its entries, respectively, and denote $f\left(x_{b,n,i}^{(\alpha)}\right)$ by $f_{b,n,i}^{(\alpha)}$, for all $i$. The following two theorems highlight the generation of the first-and second-order shifted Gegenbauer integration matrices $\mathbf{Q}_{b}^{(1)}$ and $\mathbf{Q}_{b}^{(2)}$, and mark the truncation errors of their associated quadratures.
\begin{thm}\label{th:3} 
Let $f(x) \in C^{n+1}[0,b]$ be approximated by a truncated series expansion of shifted Gegenbauer basis polynomials of the following form:
\begin{equation}\label{eq:37}
f(x) \approx \sum\limits_{i = 0}^{n} a_i\,G_{b,i}^{(\alpha )}(x),
\end{equation}
where $a_i, i = 0, \ldots ,n,$ are the shifted Gegenbauer spectral coefficients obtained by interpolating $f$ at the SFGGR points. Then there exist a first-order integration matrix $\mathbf{Q}_{b}^{(1)} = \left(Q_{b,i,j}^{(1)}\right),\; 0\le i,j\le n$, and some numbers $\xi_{b,n,i}^{(1)} \in (0,b), i=0,\ldots,n,$ such that 
\begin{equation}\label{eq:38}
\int_{0}^{x_{b,n,i}^{(\alpha)}} f(x)\,dx= \sum_{k=0}^{n} Q_{b,i,k}^{(1)}\,f_{b,n,k}^{(\alpha)} + {}_fE_{b,n}^{(1)}\left(x_{b,n,i}^{(\alpha)},\xi_{b,n,i}^{(1)}\right)\quad \forall i,
\end{equation}
where
\begin{equation}\label{eq:39}
\mathbf{Q}_{b}^{(1)} = \frac{b}{2}\mathbf{Q}^{(1)},
\end{equation}
\begin{equation}\label{eq:40}
{}_fE_{b,n}^{(1)}\left( {x_{b,n,i}^{(\alpha )},\xi _{b,n,i}^{(1)}} \right) = \frac{{{f^{(n + 1)}}\left( {\xi _{b,n,i}^{(1)}} \right)}}{{(n + 1)!K_{b,n + 1}^{(\alpha )}}}\int_0^{x_{b,n,i}^{(\alpha )}} {{q_{b,n}^{(\alpha)}}} (x){\mkern 1mu} dx\quad \forall i,
\end{equation}
$q_{b,n}^{(\alpha)}(x) = q_n^{(\alpha)}\left(2x/b-1\right)$, for all $x \in (0,b)$, and $K_{b,n}^{(\alpha )} = {\left( {2/b} \right)^{n}}K_n^{(\alpha )}$ is the leading coefficient of the shifted Gegenbauer polynomial $G_{b,n}^{(\alpha )}(x)$.
\end{thm}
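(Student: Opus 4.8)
The plan is to reduce the entire statement to Theorem~\ref{th:1} by means of the affine change of variables $x = (b/2)(t+1)$, which maps the reference interval $[-1,1]$ onto the shifted domain $[0,b]$. First I would introduce the pulled-back function $\tilde f(t) = f\bigl((b/2)(t+1)\bigr)$, which inherits the regularity $\tilde f \in C^{n+1}[-1,1]$ from the hypothesis $f \in C^{n+1}[0,b]$, and observe via \eqref{eq:16} that $\tilde f\bigl(x_{n,k}^{(\alpha)}\bigr) = f\bigl(x_{b,n,k}^{(\alpha)}\bigr) = f_{b,n,k}^{(\alpha)}$, so that the nodal data of $\tilde f$ on the FGGR grid coincide with the nodal data of $f$ on the SFGGR grid. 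Pushing the substitution through the integral then gives
\begin{equation*}
\int_0^{x_{b,n,i}^{(\alpha)}} f(x)\,dx = \frac{b}{2}\int_{-1}^{x_{n,i}^{(\alpha)}} \tilde f(t)\,dt \quad \forall i,
\end{equation*}
after which Theorem~\ref{th:1} applied to $\tilde f$ supplies both the unshifted quadrature and its error term.

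The matrix identity \eqref{eq:39} then falls out immediately: inserting \eqref{eq:19}--\eqref{eq:20} for $\tilde f$ into the display above and using $\tilde f\bigl(x_{n,k}^{(\alpha)}\bigr) = f_{b,n,k}^{(\alpha)}$ produces $\int_0^{x_{b,n,i}^{(\alpha)}} f(x)\,dx = \sum_{k=0}^n (b/2)\,Q_{i,k}^{(1)}\,f_{b,n,k}^{(\alpha)} + (\text{error})$, whence $Q_{b,i,k}^{(1)} = (b/2)\,Q_{i,k}^{(1)}$ for all $i,k$, i.e. $\mathbf{Q}_b^{(1)} = (b/2)\,\mathbf{Q}^{(1)}$.

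The only genuine work lies in transporting the error term \eqref{eq:21} of $\tilde f$ into the shifted form \eqref{eq:40}, and this is where I expect the main (if mild) obstacle to be. Here I would track three scalings in tandem. The chain rule gives $\tilde f^{(n+1)}(t) = (b/2)^{n+1} f^{(n+1)}\bigl((b/2)(t+1)\bigr)$, so setting $\xi_{b,n,i}^{(1)} = (b/2)\bigl(\xi_{n,i}^{(1)}+1\bigr) \in (0,b)$ converts $\tilde f^{(n+1)}\bigl(\xi_{n,i}^{(1)}\bigr)$ into $(b/2)^{n+1} f^{(n+1)}\bigl(\xi_{b,n,i}^{(1)}\bigr)$; the same substitution applied to the integral factor, together with $q_{b,n}^{(\alpha)}(x) = q_n^{(\alpha)}(2x/b-1)$, gives
\begin{equation*}
\int_{-1}^{x_{n,i}^{(\alpha)}} q_n^{(\alpha)}(t)\,dt = \frac{2}{b}\int_0^{x_{b,n,i}^{(\alpha)}} q_{b,n}^{(\alpha)}(x)\,dx;
\end{equation*}
and finally the leading-coefficient rescaling $K_{b,n+1}^{(\alpha)} = (2/b)^{n+1} K_{n+1}^{(\alpha)}$ must be invoked to absorb the surviving powers of $b/2$.

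Assembling these, the prefactor $b/2$ from the change of variables combines with the $(b/2)^{n+1}$ from the derivative and the $2/b$ from the integral to leave a net factor $(b/2)^{n+1}$, which is exactly cancelled on writing $K_{n+1}^{(\alpha)} = (b/2)^{n+1} K_{b,n+1}^{(\alpha)}$; what remains is precisely the claimed expression \eqref{eq:40}. The delicate point is merely to confirm that these powers of $b/2$ cancel in the stated order, not to establish anything conceptually new, since the existence of the intermediate point $\xi_{n,i}^{(1)}$—and hence of $\xi_{b,n,i}^{(1)}$—is already guaranteed by Theorem~\ref{th:1}.
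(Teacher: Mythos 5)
Your proposal is correct and follows essentially the same route as the paper: the affine substitution $x = (b/2)(t+1)$, the pull-back $\tilde f$ (the paper's $\bar f$), an application of Theorem~\ref{th:1}, and the identification $\xi_{b,n,i}^{(1)} = (b/2)\bigl(\xi_{n,i}^{(1)}+1\bigr)$. Your explicit bookkeeping of the three scalings — $(b/2)^{n+1}$ from the derivative, $2/b$ from the $q_n^{(\alpha)}$ integral, and $K_{n+1}^{(\alpha)} = (b/2)^{n+1}K_{b,n+1}^{(\alpha)}$ — is in fact more detailed than the paper's proof, which leaves that cancellation implicit.
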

\begin{proof}
Using the change of variable $x = b\left(y+1\right)/2$, and Theorem \ref{th:1}, we find that
	\begin{equation}\label{eq:41}
	\int_{0}^{x_{b,n,i}^{(\alpha)}} f(x)\,dx = \frac{b}{2} \int_{-1}^{x_{n,i}^{(\alpha)}} \bar{f}(y)\,dy =\frac{b}{2}\left(\sum\limits_{k=0}^{n} Q_{i,k}^{(1)}\, \bar{f}\left(x_{n,k}^{(\alpha)}\right) + \frac{\bar{f}^{(n+1)}\left(\xi_{n,i}^{(1)}\right)}{(n+1)!K_{n+1}^{(\alpha)}  } \int_{-1}^{x_{n,i}^{(\alpha)}} q_n^{(\alpha)}(x)\, dx\right)\quad \forall i,
		\end{equation}
for some $\xi_{n,i}^{(1)} \in (-1,1)$, $i=0,\ldots,n$, where $\bar{f}(y)=f(b(y+1)/2)$, for all $y \in [-1,1]$, and $\xi_{b,n,i}^{(1)} = b\left(\xi_{n,i}^{(1)}+1\right)/2$, for all $i.$
\end{proof}
\begin{thm}\label{th:4}
Given the assumptions of Theorem \ref{th:3}, there exist a second-order integration matrix $\mathbf{Q}_{b}^{(2)}=\left(Q_{b,i,j}^{(2)}\right),\; 0\le i,j\le n$, and some numbers $\xi_{b,n,i}^{(2)} \in (0,b)$,\; $i=0,\ldots,n,$ such that    
\begin{equation}\label{eq:44}
\int_{0}^{x_{b,n,i}^{(\alpha)}}\int_{0}^{x} f\left(t\right)\, dt\, dx = \sum_{k=0}^{n} Q_{b,i,k}^{(2)}\,f_{b,n,k}^{(\alpha)} + {}_fE_{b,n}^{(2)}\left(x_{b,n,i}^{(\alpha)},\xi_{b,n,i}^{(2)}\right)\quad \forall i, t \in [0,b],
\end{equation}
where
\begin{equation}\label{eq:45}
Q_{b,i,k}^{(2)} = \left(x_{b,n,i}^{(\alpha)}-x_{b,n,k}^{(\alpha)}\right) Q_{b,i,k}^{(1)}\quad \forall i, k,
\end{equation}
\begin{equation}\label{eq:46}
{}_fE_{b,n}^{(2)}\left( {x_{b,n,i}^{(\alpha )},\xi _{b,n,i}^{(2)}} \right) = \frac{{\left( {x_{b,n,i}^{(\alpha )} - \xi _{b,n,i}^{(2)}} \right){f^{(n + 1)}}\left( {\xi _{b,n,i}^{(2)}} \right) - (n + 1){f^{(n)}}\left( {\xi _{b,n,i}^{(2)}} \right)}}{{(n + 1)!K_{b,n + 1}^{(\alpha )}}}\int_0^{x_{b,n,i}^{(\alpha )}} {{q_{b,n}^{(\alpha)}}} (x)\, dx\quad \forall i.
\end{equation}
\end{thm}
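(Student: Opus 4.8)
The plan is to collapse the iterated integral into a single weighted integral and then invoke the first-order shifted quadrature of Theorem \ref{th:3}. First I would apply Cauchy's formula for repeated integration (exactly as in the proof of Theorem \ref{th:2}) to write, for each fixed $i$, $\int_0^{x_{b,n,i}^{(\alpha)}}\int_0^x f(t)\,dt\,dx = \int_0^{x_{b,n,i}^{(\alpha)}} \bigl(x_{b,n,i}^{(\alpha)}-t\bigr) f(t)\,dt$. The crucial observation is that the new integrand is a single function $g_{b,i}(t) := \bigl(x_{b,n,i}^{(\alpha)}-t\bigr) f(t)$, and since the prefactor is an affine (hence $C^{\infty}$) polynomial while $f \in C^{n+1}[0,b]$, we have $g_{b,i}\in C^{n+1}[0,b]$; thus Theorem \ref{th:3} applies verbatim to $g_{b,i}$. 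One must interpolate this weighted integrand, not $f$ itself: interpolating $f$ and then multiplying by the affine factor would produce a degree-$(n+1)$ polynomial that the first-order rule does not integrate exactly, and the clean matrix formula would not emerge. Because $g_{b,i}$ depends on $i$, only the $i$th-node quadrature of Theorem \ref{th:3} is used for this particular integrand, which is legitimate since the single integral in \eqref{eq:44} runs up to the very same endpoint $x_{b,n,i}^{(\alpha)}$.

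Next I would extract the quadrature matrix. Evaluating $g_{b,i}$ at the SFGGR nodes gives $g_{b,i}\bigl(x_{b,n,k}^{(\alpha)}\bigr) = \bigl(x_{b,n,i}^{(\alpha)}-x_{b,n,k}^{(\alpha)}\bigr) f_{b,n,k}^{(\alpha)}$, so the first-order expansion of Theorem \ref{th:3} immediately produces the main sum $\sum_{k=0}^n Q_{b,i,k}^{(1)} \bigl(x_{b,n,i}^{(\alpha)}-x_{b,n,k}^{(\alpha)}\bigr) f_{b,n,k}^{(\alpha)}$, which identifies $Q_{b,i,k}^{(2)} = \bigl(x_{b,n,i}^{(\alpha)}-x_{b,n,k}^{(\alpha)}\bigr) Q_{b,i,k}^{(1)}$ as claimed in \eqref{eq:45}; in particular the diagonal entries vanish. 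As a consistency check one may combine this with \eqref{eq:16} and \eqref{eq:39} to recover the clean scaling $\mathbf{Q}_b^{(2)} = (b/2)^2\,\mathbf{Q}^{(2)}$, the second-order analogue of \eqref{eq:39}.

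The main (though ultimately routine) obstacle is the error term. Applying the error formula \eqref{eq:40} of Theorem \ref{th:3} to $g_{b,i}$ yields an error proportional to $g_{b,i}^{(n+1)}$ evaluated at some interior point, so the only genuine computation is differentiating the product $\bigl(x_{b,n,i}^{(\alpha)}-t\bigr) f(t)$ exactly $n+1$ times. Since the affine factor has vanishing derivatives of order two and higher, Leibniz's rule leaves only two surviving terms, giving $g_{b,i}^{(n+1)}(\xi) = \bigl(x_{b,n,i}^{(\alpha)}-\xi\bigr) f^{(n+1)}(\xi) - (n+1) f^{(n)}(\xi)$. Substituting this into \eqref{eq:40}, and naming $\xi_{b,n,i}^{(2)}$ the interior point furnished by Theorem \ref{th:3} for the function $g_{b,i}$, reproduces exactly the error expression \eqref{eq:46}, with the same weight $K_{b,n+1}^{(\alpha)}$ and the same integral $\int_0^{x_{b,n,i}^{(\alpha)}} q_{b,n}^{(\alpha)}(x)\,dx$. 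An equivalent route, paralleling the proof of Theorem \ref{th:3}, is to use the double substitution $x=b(y+1)/2$ to reduce \eqref{eq:44} to the unshifted identity of Theorem \ref{th:2} applied to $\bar f(y)=f(b(y+1)/2)$; the scaling relations $K_{b,n+1}^{(\alpha)} = (2/b)^{n+1} K_{n+1}^{(\alpha)}$ and $\bar f^{(m)}=(b/2)^m f^{(m)}$ then convert the unshifted error back into \eqref{eq:46}, the only care needed being a consistent tracking of the $(b/2)^2$ factors.
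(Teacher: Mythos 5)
Your proposal is correct and follows essentially the same route the paper intends: the paper's proof of Theorem \ref{th:4} is only the one-line remark that it "follows similar to that of Theorem \ref{th:2}," which in turn rests on Cauchy's formula for repeated integration, and your argument simply carries out that sketch in full — collapsing the iterated integral to $\int_0^{x_{b,n,i}^{(\alpha)}}\bigl(x_{b,n,i}^{(\alpha)}-t\bigr)f(t)\,dt$, applying the first-order shifted quadrature of Theorem \ref{th:3} to the weighted integrand, and using Leibniz's rule to obtain the two-term derivative in \eqref{eq:46}. The details (vanishing diagonal of $\mathbf{Q}_b^{(2)}$, the scaling $\mathbf{Q}_b^{(2)}=(b/2)^2\,\mathbf{Q}^{(2)}$, and the alternative change-of-variable route) all check out.
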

\begin{proof}
The proof follows similar to that of Theorem \ref{th:2}.
\end{proof}
In the rest of the article, by $f(\bm{x})$, we mean ${[f({x_1}),f({x_2}), \ldots ,f({x_n})]^T}$, for any real-valued function $f: \bm{\Omega}_1 \subseteq \mathbb{R} \to \mathbb{R}$, and $\bm{x} = [x_1, x_2, \ldots, x_n]^T \in \mathbb{R}^n$. Similarly, by $g(\bm{x}, \bm{y})$, we mean ${[g({x_1},y_1),g({x_2},y_2), \ldots ,g({x_n},y_n)]^T}$, for any real-valued function $g: \bm{\Omega}_2 \subseteq \mathbb{R}^2 \to \mathbb{R}$, and $\bm{y} = [y_1, y_2, \ldots, y_n]^T \in \mathbb{R}^n$.
\section{The SGIPSM}
\label{sec:TDOLEE1}
In this section, we present the SGIPSM to solve numerically Problems 1 and 2. The derivation of the proposed SGIPSM is divided into two cases according to whether $\beta \neq 0$ or $\beta = 0$; therefore, we refer to these two cases by Assumptions 1 and 2, respectively.

We commence our numerical scheme by recasting Problem 1 into its integral formulation. By introducing the useful substitution
\begin{equation}\label{eq:47}
{y''}(x) = \phi(x)\quad \forall x \in (0,b],
\end{equation}
for some unknown function $\phi (x)$, we can recover the solution function and its derivative in terms of $\phi (x)$ by recursive integration; in particular
\begin{align}
{y'}(x) & =\alpha_{1}+\int_{0}^{x} \phi(t)\,dt,\label{eq:48}\\
y(x) &= y(0) + \alpha_{1} x+\int_{0}^{x} \int_{0}^{t_{2}} \phi(t_{1})\,dt_{1}\,dt_{2},\quad t_i \in (0,b]\;\forall i.\label{eq:49}
\end{align}
Substituting Eqs. \eqref{eq:47}, \eqref{eq:48}, and \eqref{eq:49} into Eq. \eqref{eq:1} yields the integral nonlinear Lane-Emden equation as follows:
\begin{equation}\label{eq:50}
\phi(x)+\frac{\alpha_{2}}{x} \left(\alpha_{1}+\int_{0}^{x} \phi(t)\,dt \right)+f\left(x,y(0)+\alpha_{1} x+\int_{0}^{x} \int_{0}^{t_{2}} \phi(t_{1})\,dt_{1}\,dt_{2}\right)=0 \quad \forall x \in (0,b].
\end{equation}
If Assumption 1 holds, then we can approximate $y(0)$ using Eqs. \eqref{eq:3}, \eqref{eq:48}, and \eqref{eq:49} as follows:
\begin{equation}\label{eq:51}
	y(0) \approx \frac{1}{\beta}\left(\delta - \gamma \left(\alpha_{1} + \mathbf{Q}_{b,0}^{(1)}\, \bm{\Phi}\right) \right) - \alpha_{1}b - \mathbf{Q}_{b,0}^{(2)}\,\bm{\Phi},
	\end{equation}
where $\bm{\Phi} = \phi\left(\bm{x}_{b,n}^{(\alpha)}\right)$, and\;$ {\bm{x}_{b,n}^{(\alpha)}}=\left(x_{b,n,0}^{(\alpha)},x_{b,n,1}^{(\alpha)}, \ldots,x_{b,n,n}^{(\alpha)}\right)^{T}$. Let $\mathbf{D}_{\bm{v}}= \diag \left(v_{0}, \ldots,v_{n}\right)$, for any vector $\bm{v}=\left(v_{i}\right)\in \mathbb{R}^{n+1},\;i=0, \ldots,n$, and denote the vector of reciprocals $\left(1/v_{0}, \ldots,1/v_{n}\right)^{T}$ by $\bm{v}^{\div}$. Moreover, let $\bm{1}_{n}$ and $\mathbf{I}_{n}$ denote the all ones column vector and the identity matrix each of size $n$, respectively. Then collocating the integral nonlinear Lane-Emden Eq. \eqref{eq:50} at the SFGGR nodes yields the following (n+1)th-order nonlinear system
\begin{subequations}
\begin{equation}\label{eq:52}
\mathbf{H}_{b,n}^{(\alpha)}\, \bm{\Phi} + f\left(\bm{x}_{b,n}^{(\alpha)},\bar{\bm{x}}_{b,n}^{(\alpha)} + \bm{\Theta}_{b,n}\, \bm{\Phi}\right) =-\alpha_{1} \alpha_{2} \left(\bm{x}_{b,n}^{(\alpha)}\right)^{\div},
\end{equation}
where
\begin{equation}\label{eq:53}
\mathbf{H}_{b,n}^{(\alpha)}=\mathbf{I}_{n+1} + \alpha_{2} \mathbf{D}_{\left(\bm{x}_{b,n}^{(\alpha)}\right)^{\div}}\,\mathbf{Q}_{b}^{(1)},
\end{equation}
\begin{equation}\label{eq:54} 
\bar{\bm{x}}_{b,n}^{(\alpha)}=\frac{1}{\beta} \left(\delta-\gamma \alpha_{1} \right) \bm{1}_{n+1}+\alpha_{1} \left(\bm{x}_{b,n}^{(\alpha)}-b\bm{1}_{n+1}\right),
\end{equation}
\begin{equation}\label{eq:55}
\mathbf{\Theta }_{b,n}=\left(\mathbf{Q}_{b}^{(2)}-\left(\mathbf{Q}_{b,0}^{(2)}+\frac{\gamma}{\beta} \mathbf{Q}_{b,0}^{(1)}\right) \otimes \bm{1}_{n+1}\right),
\end{equation}
\end{subequations}
and ``$\otimes$'' denotes the Kronecker product. The solution of the nonlinear system \eqref{eq:52} produces the approximate solution vector $\bm{\Phi}$. The original solution vector, $y\left(\bm{x_{b,n}^{(\alpha)}}\right)$, can then be recovered using Eq. \eqref{eq:49} as follows: 
\begin{equation}\label{eq:56}
y\left(\bm{x_{b,n}^{(\alpha)}}\right) \approx \bar{\bm{x}}_{b,n}^{(\alpha)} + \mathbf{\Theta}_{b,n}\, \bm{\Phi}.
\end{equation} 
We can find the approximate solution at any point $x \in (0,b]$ using interpolation at the SFGGR nodes as follows:
\begin{equation}\label{eq:57}
y(x) \approx \sum_{k=0}^{n} y_{b,n,k}^{(\alpha)}\, L_{b,n,k}^{(\alpha)}(x)\quad \forall x \in (0,b],
\end{equation}
where $y_{b,n,k}^{(\alpha)} = y\left(x_{b,n,k}^{(\alpha)}\right)$, for all $k$,  
\begin{equation}
	L_{b,n,k}^{(\alpha )}(x) = \varpi _{b,n,k}^{(\alpha )}\sum\limits_{j = 0}^n {\frac{1}{{\lambda _{b,j}^{(\alpha )}}}} G_{b,j}^{(\alpha )}\left( {x_{b,n,k}^{(\alpha )}} \right)G_{b,j}^{(\alpha )}(x)\quad \forall k,
\end{equation}
is the $n$th-degree shifted Lagrange interpolating polynomial, $\lambda _{b,j}^{(\alpha )} = {\left( {b/2} \right)^{2\,\alpha }}\lambda _j^{(\alpha )}$, and $\varpi _{b,n,j}^{(\alpha )} = {\left( {b/2} \right)^{2\,\alpha }}\varpi _{n,j}^{(\alpha )}$, for all $j$. This technique can be carried out using Algorithm \ref{algorithm:1} in \ref{app:Alg1}. For Problem 2, Eqs. \eqref{eq:52} can be reduced into the following linear algebraic system
\begin{subequations}
\begin{equation}\label{eq:58}
\mathcal{A}\,{\bm{\Phi}} = \mathcal{B},
\end{equation}
where
\begin{align}
\mathcal{A} &= \mathbf{H}_{b,n}^{(\alpha)} + \mathbf{D}_{p\left(\bm{x_{b,n}^{(\alpha)}}\right)}\,\mathbf{\Theta }_{b,n},\label{eq:CoeffAB1}\\
\mathcal{B} &= g\left(\bm{x}_{b,n}^{(\alpha)}\right)-\alpha_{1} \alpha_{2}\left(\bm{x}_{b,n}^{(\alpha)}\right)^{\div} -\mathbf{D}_{p\left(\bm{x_{b,n}^{(\alpha)}}\right)}\, \bar{\bm{x}}_{b,n}^{(\alpha)}.\label{eq:CoeffAB2}
\end{align}
\end{subequations}
The original solution vector $y\left(\bm{x_{b,n}^{(\alpha)}}\right)$ can again be recovered using Eq. \eqref{eq:56}. This technique can be implemented using Algorithm \ref{algorithm:2} in \ref{app:Alg1}.

Now consider the second scenario when Assumption 2 holds. In this case, we can readily impose the boundary condition \eqref{eq:3} via Eq. \eqref{eq:48} by the following approximation:
\begin{equation}\label{eq:8new}
	 \mathbf{Q}_{b,0}^{(1)}\, \bm{\Phi} \approx \frac{\delta}{\gamma} - \alpha_{1}.
	\end{equation}
Collocating Eq. \eqref{eq:50} at the SFGGR nodes and combining the result with Eq. \eqref{eq:8new} gives the following $(n+2)$th-order nonlinear system in the $(n+2)$ unknowns $\bm{\Phi}$ and $y(0)$:
\begin{subequations}
\begin{equation}\label{eq:9new}
{\mathbf{\hat H}}_{b,n}^{(\alpha )}\bm{\Phi}  + \hat {f}_{b,n}^{(\alpha )}(\bm{\Phi}) = {\bm{\hat x}}_{b,n}^{(\alpha )},
\end{equation}
where
\begin{equation}\label{eq:hatH1}
{\mathbf{\hat H}}_{b,n}^{(\alpha )} = \left[ {{\mathbf{H}}_{b,n}^{(\alpha )};{\mathbf{Q}}_{b,0}^{(1)}} \right],
\end{equation}
\begin{equation}\label{eq:fbn1}
	\hat {f}_{b,n}^{(\alpha )}(\bm{\Phi}) = \left[ {f\left( {{\bm{x}}_{b,n}^{(\alpha )}, y(0)\,{{\bm{1}}_{n + 1}} + {\alpha _1}\,{\bm{x}}_{b,n}^{(\alpha )} + {\mathbf{Q}}_b^{(2)}\,\bm{\Phi} } \right);0} \right],
\end{equation}
\begin{equation}\label{hatx1}
	{\bm{\hat x}}_{b,n}^{(\alpha )} = \left[ { - {\alpha _1}\,{\alpha _2}\,{{\left( {{\bm{x}}_{b,n}^{(\alpha )}} \right)}^ \div };\frac{\delta }{\gamma } - {\alpha _1}} \right],
\end{equation}
\end{subequations}
and ``$[.;.]$'' is the vertical matrix concatenation along columns defined by ``${[{.^T},{.^T}]^T}$.'' The solution of the nonlinear system \eqref{eq:9new} produces the approximate solution vector $\bm{\Psi} = \left[\bm{\Phi};y(0)\right]$ of Problem 1. The original solution vector, $y\left(\bm{x_{b,n}^{(\alpha)}}\right)$, can then be recovered using Eq. \eqref{eq:49} as follows: 
\begin{equation}\label{eq:14new}
y\left(\bm{x_{b,n}^{(\alpha)}}\right) \approx y(0)\,\bm{1}_{n+1}+ \alpha_{1} \, \bm{x}_{b,n}^{(\alpha)} + \mathbf{Q}_{b}^{(2)}\, \bm{\Phi}.
\end{equation} 
This technique can be achieved using Algorithm \ref{algorithm:3} in \ref{app:Alg1}. For Problem 2, we can easily show after some mathematical manipulation that Eqs. \eqref{eq:9new} can be reduced into the following linear algebraic system
\begin{subequations}\label{eq:58new}
\begin{equation}\label{eq:58newC1}
\mathcal{C}\,{\bm{\Psi}} = \mathcal{D},
\end{equation}
where
\begin{align}
{\mathcal{C}} &= \left[ {\begin{array}{*{20}{c}}
{{\mathbf{H}}_{b,n}^{(\alpha )} + {{\mathbf{D}}_{p\left( {{\bm{x}}_{b,n}^{(\alpha )}} \right)}}\,{\mathbf{Q}}_b^{(2)}}&{p\left( {{\bm{x}}_{b,n}^{(\alpha )}} \right)}\\
{{\mathbf{Q}}_{b,0}^{(1)}}&0
\end{array}} \right],\label{eq:C1}\\
{\mathcal{D}} &= {\bm{\hat g}}_{b,n}^{(\alpha )} + {\bm{\hat x}}_{b,n}^{(\alpha )},\label{eq:D1}\\
{\bm{\hat g}}_{b,n}^{(\alpha )} &= \left[ {g\left( {{\bm{x}}_{b,n}^{(\alpha )}} \right) - {\alpha _1}\,{{\mathbf{D}}_{p\left( {{\bm{x}}_{b,n}^{(\alpha )}} \right)}}\,{\bm{x}}_{b,n}^{(\alpha )};0} \right].\label{eq:g1}
\end{align}
\end{subequations}
The original solution vector, $y\left(\bm{x_{b,n}^{(\alpha)}}\right)$, can again be recovered using Eq. \eqref{eq:14new}. This technique can be accomplished using Algorithm \ref{algorithm:4} in \ref{app:Alg1}. 
\section{Error bounds and convergence analysis}
\label{sec:EBACR1}
We begin this section by stating one technical lemma and three theorems that are needed for the subsequent derivation of the error estimates of the SGIPSM. Let $\mathbb{R}_0^+ = \mathbb{R}^+ \cup \{ 0\}$, and $\|f\|_{\infty,\mathbb{S}}=\sup\left\{\,\left|f(x)\right|:x\in \mathbb{S}\,\right\}$, be the uniform norm (or sup norm) of a real-valued bounded function $f$ defined on a set $\mathbb{S}\subset \mathbb{R}$. The following lemma is a modified version of \cite[Lemma 4.1]{elgindy2016high}. 
\begin{lem}\label{lem:5}
The uniform norm of the $n$th-degree Gegenbauer polynomial, ${G_n^{(\alpha )}}$, and its associated shifted form, ${G_{b,n}^{(\alpha )}}$, is given by
$$ {\left\| {G_n^{(\alpha )}} \right\|_{\infty ,[ - 1,1]}}=\left\| {G_{b,n}^{(\alpha )}} \right\|_{\infty ,[0,b]} =
\begin{cases}
1,\quad n \geq 0,\;  \alpha \geq 0, \\
\displaystyle{\binom{\alpha+\frac{n}{2}-1}{\frac{n}{2}} \frac{n! \Gamma{(2 \alpha)}}{\Gamma{(n+2\alpha)}}}, \quad \frac{n}{2} \in \mathbb{Z}_0^ +  \wedge \frac{-1}{2}<\alpha < 0,\\
\displaystyle{\frac{n \Gamma{\left(\alpha+\frac{1}{2}\right)} \Gamma{\left(\frac{n}{2}\right)}  }{\sqrt{\pi} \sqrt{n\,(2 \alpha+n)}\, \Gamma{\left(\frac{n}{2}+\alpha\right)}}} ,\quad \frac{{n + 1}}{2} \in {\mathbb{Z}^ + } \wedge \frac{-1}{2}<\alpha < 0, \\
D^{(\alpha)}n^{-\alpha}, \quad \frac{-1}{2}<\alpha<0,\; n\rightarrow \infty,\\
\end{cases}
$$
where $D^{(\alpha)} > 1$ is a constant dependent on $\alpha$, but independent of $n$.
\end{lem}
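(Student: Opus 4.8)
The plan is to reduce the shifted statement to the unshifted one and then treat the four regimes separately. Since $G_{b,n}^{(\alpha)}(x)=G_n^{(\alpha)}(2x/b-1)$ and the affine map $x\mapsto 2x/b-1$ sends $[0,b]$ bijectively onto $[-1,1]$, the two suprema coincide and it suffices to evaluate $\|G_n^{(\alpha)}\|_{\infty,[-1,1]}$. For the first case $\alpha\ge0$, I would first note that the recurrence \eqref{eq:6} forces $G_n^{(\alpha)}(1)=1$ for every $n$ (immediate induction), and then use the cosine expansion $G_n^{(\alpha)}(\cos\theta)=\sum_{k}c_{k}\cos((n-2k)\theta)$ whose coefficients satisfy $c_k\ge0$ when $\alpha\ge0$. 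All terms attain their maximum simultaneously at $\theta=0$, so $|G_n^{(\alpha)}(x)|\le G_n^{(\alpha)}(1)=1$ with equality at $x=\pm1$; the sub-case $\alpha=0$ reduces to $\|T_n\|_{\infty,[-1,1]}=1$.

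For $-1/2<\alpha<0$ the coefficients $c_k$ no longer share a common sign, and the global maximum moves into the interior. Here I would invoke the classical monotonicity of the successive relative maxima of $|G_n^{(\alpha)}|$ (Szeg\H{o}): for $\alpha<0$ these maxima \emph{decrease} as one moves from the centre of $[-1,1]$ toward the endpoints, so the global maximum is attained at the innermost critical point. When $n$ is even this point is exactly $x=0$, and case 2 then follows by evaluating the explicit value $G_n^{(\alpha)}(0)$ and taking absolute values; this is a routine $\Gamma$-function computation (one checks, e.g., that for $n=2,\alpha=-1/4$ it reproduces $\max_{[-1,1]}|3x^2-2|=2$ at $x=0$).

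The odd-$n$ formula, case 3, is the crux and the step I expect to be the main obstacle. Now $x=0$ is a \emph{zero} of $G_n^{(\alpha)}$, so the dominant maximum sits at the first off-centre critical point, a zero of $G_{n-1}^{(\alpha+1)}$ for which there is no elementary closed form. My plan is to substitute the Darboux--Szeg\H{o} asymptotic representation $G_n^{(\alpha)}(\cos\theta)\sim A_n(\theta)\cos\!\big((n+\alpha)\theta-\alpha\pi/2\big)$, with amplitude $A_n(\theta)$ proportional to $(\sin\theta)^{-\alpha}$, at this innermost extremum near $\theta=\pi/2$ where the envelope peaks, and then simplify the resulting $\Gamma$-factors to the stated expression. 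The genuine difficulty is that the closed form is the value at the \emph{asymptotic} location of the first maximum and is therefore exact only in the limit: for $n=3,\alpha=-1/4$ it gives $\approx 2.19$ against the true maximum $\approx 2.14$, so care is needed regarding the precise sense in which the equality is to be read.

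Finally, case 4 should drop out of cases 2 and 3 by Stirling's asymptotics $\Gamma(n/2)/\Gamma(n/2+\alpha)\sim(n/2)^{-\alpha}$: both the even and odd expressions collapse to $D^{(\alpha)}n^{-\alpha}$ with $D^{(\alpha)}=2^{\alpha}\Gamma(\alpha+\tfrac12)/\sqrt{\pi}$, after applying the duplication formula to combine the $\Gamma$-factors. To close the statement I would verify $D^{(\alpha)}>1$ on $(-1/2,0)$ by noting $D^{(0)}=1$ together with $\tfrac{d}{d\alpha}\log D^{(\alpha)}=\log 2+\psi(\alpha+\tfrac12)<0$ there, so that $D^{(\alpha)}$ strictly exceeds $1$ for every $\alpha\in(-1/2,0)$.
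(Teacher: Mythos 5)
First, a point of comparison: the paper does not actually prove this lemma. It is imported wholesale as ``a modified version of'' Lemma 4.1 of \cite{elgindy2016high}, so there is no in-paper argument to measure yours against. Judged on its own terms, your reduction to $[-1,1]$ via the affine map, your case 1 (nonnegativity of the cosine-expansion coefficients for $\alpha\ge 0$ together with $G_n^{(\alpha)}(1)=1$ from the recurrence), your case 2 (global maximum at $x=0$ for even $n$ and $-1/2<\alpha<0$, then a $\Gamma$-computation), and your case 4 (Stirling plus the duplication formula giving $D^{(\alpha)}=2^{\alpha}\Gamma(\alpha+\frac{1}{2})/\sqrt{\pi}$, with $D^{(\alpha)}>1$ from $D^{(0)}=1$ and $\log 2+\psi(\alpha+\frac{1}{2})<0$) are all sound.

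The genuine gap is case 3, and you have half-diagnosed it yourself: your numerical check ($n=3$, $\alpha=-1/4$ gives $\approx 2.19$ against a true maximum $\approx 2.14$; already $n=1$ gives $(2\alpha+1)^{-1/2}>1$ against a true maximum of $1$) shows that the displayed expression is \emph{not} the uniform norm but a strict upper bound for it, so the stated equality cannot be proved because it is false as written. What your proposal lacks is the mechanism that produces this particular closed form for every finite odd $n$, not merely asymptotically. The missing idea is the Sonin-type envelope $g(x)=\left(G_n^{(\alpha)}(x)\right)^2+\frac{1-x^2}{n(n+2\alpha)}\left(G_n^{(\alpha)\prime}(x)\right)^2$: the Gegenbauer differential equation gives $g'(x)=\frac{4\alpha x}{n(n+2\alpha)}\left(G_n^{(\alpha)\prime}(x)\right)^2$, so for $-1/2<\alpha<0$ the function $g$ is maximized at $x=0$ and $\left\|G_n^{(\alpha)}\right\|_{\infty,[-1,1]}^2\le g(0)$. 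For even $n$ this recovers case 2 exactly (since $G_n^{(\alpha)\prime}(0)=0$), and for odd $n$ it yields $\left\|G_n^{(\alpha)}\right\|_{\infty,[-1,1]}\le\left|G_n^{(\alpha)\prime}(0)\right|/\sqrt{n(n+2\alpha)}$, which is precisely the case-3 expression. Your Darboux--Szeg\H{o} route only controls the $n\to\infty$ limit and cannot deliver this finite-$n$ bound. Replacing it by the envelope argument, and reading the second and third branches with ``$\le$'' rather than ``$=$'' (which is all the paper ever needs, since the lemma enters Theorems \ref{th:6}--\ref{th:8} only as an upper estimate), closes the proof.
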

The following three theorems highlight the error bounds of the Gegenbauer quadratures associated with $\mathbf{Q}^{(1)}, \mathbf{Q}_b^{(1)}$, and $\mathbf{Q}_b^{(2)}$, respectively.
\begin{thm}
\label{th:6}
Assume that $f(x) \in C^{n+1}[-1,1]$ and $\left\| f^{(n + 1)} \right\|_{\infty ,[-1,1]} = A \in \mathbb{R}^{+}$, for some constant $A$ independent of $n$. Moreover, let $\int_{-1}^{x_{n,i}^{(\alpha)}}f(x)\,dx$ be approximated by the Gegenbauer quadrature, $\sum\nolimits_{k = 0}^n {Q_{i,k}^{(1)}\,f_{n,k}^{(\alpha )}},\; i = 0, \ldots ,n.$ Then there exist some positive constants ${B^{(\alpha )}}$ and ${C^{(\alpha )}}$ dependent on $\alpha$ and independent of n such that the quadrature truncation error is bounded by the following inequalities 
\begin{equation}\label{eq:59}
\left|{}_fE_{n}^{(1)}\left(x_{n,i}^{(\alpha)},\xi_{n,i}^{(1)}\right)\right| \leq \frac{A \Gamma{(n+2\alpha+1)} \Gamma{(\alpha+1)} \left(x_{n,i}^{(\alpha)}+1\right) }{2^{n} (n+1)! \Gamma{(n+\alpha+1)} \Gamma{(2 \alpha+1)}}{\left\| q_n^{(\alpha)} \right\|_{\infty ,[ - 1,1]}},\quad i = 0, \ldots, n,
\end{equation}
where
\begin{equation}\label{eq:60}
{\left\| q_n^{(\alpha)} \right\|_{\infty ,[-1,1]}}=
\begin{cases}
2, \quad n \geq 0 ,\;\alpha \geq 0, \\
\displaystyle{\frac{\Gamma{\left(\alpha+\frac{1}{2}\right)} \Gamma{\left(\frac{n+1}{2}\right)}}{\sqrt{\pi} \Gamma{\left(\alpha+\frac{n+1}{2}\right)}}\left(1+\sqrt{\frac{n+1}{2\alpha+n+1}}\right)},\quad \frac{n}{2} \in \mathbb{Z}_0^ +  \wedge \frac{-1}{2}<\alpha < 0,\\
\displaystyle{\frac{{\Gamma \left( {\alpha  + \frac{1}{2}} \right)\left( {\sqrt {n\left( {2\alpha  + n} \right)}  + n} \right)\Gamma \left( {\frac{n}{2}} \right)}}{{2 \sqrt \pi \, \Gamma \left( {\frac{n}{2} + \alpha  + 1} \right)}}}
,\quad \frac{{n + 1}}{2} \in {\mathbb{Z}^ + } \wedge \frac{-1}{2}<\alpha < 0.
\end{cases}
\end{equation}
Moreover, when $n \rightarrow \infty$, we have
\begin{equation}\label{ineq:asymineqalphanonneg1}
\left| {{}_fE_n^{(1)}\left( {x_{n,i}^{(\alpha )},\xi _{n,i}^{(1)}} \right)} \right| \le {B^{(\alpha )}}{2^{1- n}}{e^n}\frac{{1 + x_{n,i}^{(\alpha )}}}{{{n^{n + \frac{3}{2} - \alpha }}}},
\end{equation}
for $\alpha  \ge 0$, and
\begin{equation}\label{ineq:asymineqalphaneg1}
\left| {{}_fE_n^{(1)}\left( {x_{n,i}^{(\alpha )},\xi _{n,i}^{(1)}} \right)} \right|\mathop  < \limits_ \sim  {C^{(\alpha )}}{\left( {\frac{e}{2}} \right)^n}\frac{{1 + x_{n,i}^{(\alpha )}}}{{{n^{n + \frac{3}{2}}}}},
\end{equation}
for  $- 1/2 < \alpha  < 0$, where $\mathop  < \limits_ \sim$ means ``less than or asymptotically equal to.'' 
\end{thm}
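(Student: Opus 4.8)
The plan is to start directly from the exact error representation \eqref{eq:21} of Theorem \ref{th:1} and peel off the three ingredients of \eqref{eq:59} in turn. First I would take absolute values in \eqref{eq:21}, replace $\left|f^{(n+1)}\left(\xi_{n,i}^{(1)}\right)\right|$ by its uniform bound $A$, and estimate the definite integral by $\left|\int_{-1}^{x_{n,i}^{(\alpha)}}q_n^{(\alpha)}(x)\,dx\right| \le \left\|q_n^{(\alpha)}\right\|_{\infty,[-1,1]}\left(x_{n,i}^{(\alpha)}+1\right)$, using that the length of $\left[-1,x_{n,i}^{(\alpha)}\right]$ is $x_{n,i}^{(\alpha)}+1$. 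It then remains only to substitute the closed form of the leading coefficient $K_{n+1}^{(\alpha)}$ from \eqref{eq:9} (which is strictly positive for $\alpha>-1/2$, so no sign issues arise) and to rearrange the resulting Gamma factors; this reproduces \eqref{eq:59} verbatim.

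The second task is to evaluate $\left\|q_n^{(\alpha)}\right\|_{\infty,[-1,1]}$ for $q_n^{(\alpha)} = G_{n+1}^{(\alpha)}-G_{n}^{(\alpha)}$, and here I would split on the sign of $\alpha$. For $\alpha \ge 0$, the triangle inequality together with Lemma \ref{lem:5} gives $\left\|q_n^{(\alpha)}\right\|_\infty \le \left\|G_{n+1}^{(\alpha)}\right\|_\infty + \left\|G_n^{(\alpha)}\right\|_\infty = 2$; equality is confirmed by evaluating at the endpoint $x=-1$, where $G_m^{(\alpha)}(-1)=(-1)^m$ forces $\left|q_n^{(\alpha)}(-1)\right|=2$, yielding the first line of \eqref{eq:60}. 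For $-1/2<\alpha<0$ I would again bound $\left\|q_n^{(\alpha)}\right\|_\infty \le \left\|G_{n+1}^{(\alpha)}\right\|_\infty + \left\|G_n^{(\alpha)}\right\|_\infty$ and insert the two parity-dependent expressions of Lemma \ref{lem:5}, taking $G_n^{(\alpha)}$ in its even/odd branch and $G_{n+1}^{(\alpha)}$ in the opposite branch; collapsing the two terms onto a common prefactor via the Legendre duplication formula and the recurrence $\Gamma(z+1)=z\,\Gamma(z)$ produces the remaining two lines of \eqref{eq:60}.

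Finally, for the asymptotic estimates \eqref{ineq:asymineqalphanonneg1} and \eqref{ineq:asymineqalphaneg1} I would feed the values of $\left\|q_n^{(\alpha)}\right\|_\infty$ back into \eqref{eq:59} and apply Stirling's formula. The two facts I would use are the Gamma ratio $\Gamma(n+2\alpha+1)/\Gamma(n+\alpha+1)\sim n^{\alpha}$ and $(n+1)! \sim \sqrt{2\pi}\,n^{n+3/2}e^{-n}$, so that the $\alpha$-dependent constants collapse into $B^{(\alpha)}$ (resp. $C^{(\alpha)}$). For $\alpha\ge 0$, $\left\|q_n^{(\alpha)}\right\|_\infty = 2$ is constant and the surviving powers assemble into $2^{1-n}e^{n}\left(1+x_{n,i}^{(\alpha)}\right)\big/n^{n+3/2-\alpha}$. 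For $-1/2<\alpha<0$, I would instead use the asymptotic branch $\left\|G_n^{(\alpha)}\right\|_\infty \sim D^{(\alpha)}n^{-\alpha}$ of Lemma \ref{lem:5}, whence $\left\|q_n^{(\alpha)}\right\|_\infty \lesssim 2D^{(\alpha)}n^{-\alpha}$; the factor $n^{-\alpha}$ then cancels the $n^{\alpha}$ coming from the Gamma ratio, which is precisely why the denominator exponent drops from $n+3/2-\alpha$ to $n+3/2$ and gives \eqref{ineq:asymineqalphaneg1}.

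I expect the main obstacle to be the $-1/2<\alpha<0$ case of \eqref{eq:60}: reconciling the two parity branches of Lemma \ref{lem:5} at degrees $n$ and $n+1$ and simplifying the mixed Gamma-function products into a single compact prefactor requires careful bookkeeping with the duplication formula, and one must be attentive that it is the triangle-inequality upper bound (tight at $x=-1$ only when $\alpha\ge 0$) that the subsequent error estimate actually requires. The Stirling step is routine by comparison, the only care being the consistent treatment of the $(n+1)$ versus $n$ arguments so that the stated leading constants and the factor $1+x_{n,i}^{(\alpha)}$ emerge cleanly.
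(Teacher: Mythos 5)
Your proposal is correct and follows essentially the same route as the paper: bound the exact error representation \eqref{eq:21} using $A$, the length of the integration interval, the closed form \eqref{eq:9} of $K_{n+1}^{(\alpha)}$, and Lemma \ref{lem:5} applied to $q_n^{(\alpha)}=G_{n+1}^{(\alpha)}-G_n^{(\alpha)}$ via the triangle inequality, then pass to the asymptotics. The only cosmetic difference is that the paper outsources the Stirling-type estimate of the Gamma ratio and factorial to an external lemma of \cite{elgindy2016high} (arriving at the intermediate bound $B^{(\alpha)}(1+x_{n,i}^{(\alpha)})\,n^{\alpha-3/2}(e/(2n))^{n}\|q_n^{(\alpha)}\|_{\infty}$ before inserting $\|q_n^{(\alpha)}\|_{\infty}\lesssim D_1^{(\alpha)}(n+1)^{-\alpha}+D_2^{(\alpha)}n^{-\alpha}$ in the negative-$\alpha$ case), whereas you carry out the equivalent Stirling computation directly — the resulting cancellation of $n^{\alpha}$ against $n^{-\alpha}$ for $-1/2<\alpha<0$ is exactly the mechanism the paper uses.
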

\begin{proof}
Inequality \eqref{eq:59} is determined through Eqs. \eqref{eq:9} and \eqref{eq:21}, and Lemma \ref{lem:5}. For \,$n\rightarrow \infty$, we find through \cite[Lemma 4.2]{elgindy2016high} that
\begin{equation}
\left|{}_fE_{n}^{(1)}\left(x_{n,i}^{(\alpha)},\xi_{n,i}^{(1)}\right)\right| \leq \frac{B^{(\alpha)}\left(1+x_{n,i}^{(\alpha)}\right)}{n^{\frac{3}{2}-\alpha} (\frac{2n}{e})^{n}}  \left \|q_{n}^{(\alpha)}\right \|_{\infty,[-1,1]},
\label{eq:63}
\end{equation}
where $B^{(\alpha)}$ is a constant depend on $\alpha$ and independent of $n$. Inequality \eqref{ineq:asymineqalphanonneg1} is established by combining this result with Lemma \ref{lem:5} such that
$$
\left|{}_fE_{n}^{(1)}\left(x_{n,i}^{(\alpha)},\xi_{n,i}^{(1)}\right)\right| \leq \frac{B^{(\alpha)} \left(1+x_{n,i}^{(\alpha)}\right)}{n^{\frac{3}{2}-\alpha}(\frac{2n}{e})^{n}}
\begin{cases}
2,\quad \alpha \geq 0, \\
D_{1}^{(\alpha)}(n+1)^{-\alpha}+D_{2}^{(\alpha)}n^{-\alpha},\quad \frac{-1}{2}<\alpha < 0, \\
\end{cases}
$$
where $D_1^{(\alpha)}, D_2^{(\alpha)} > 1$  are constants dependent on $\alpha$ and independent of $n$. The asymptotic inequality \eqref{ineq:asymineqalphaneg1} is derived by realizing that $n+1 \sim n$ as $n \to \infty$, and setting $C^{(\alpha)} = B^{(\alpha )} \left(D_{1}^{(\alpha )}+D_{2}^{(\alpha )}\right)$.
\end{proof}
\begin{thm}\label{th:7}
Assume that $f(x) \in C^{n+1}[0,b]$ and $\left\| f^{(n + 1)} \right\|_{\infty ,[0,b]} = A \in \mathbb{R}^{+}$, for some constant $A$ independent of $n$. Moreover, let $\int_{0}^{x_{b,n,i}^{(\alpha)}}f(x)\,dx$ be approximated by the shifted Gegenbauer quadrature, $\sum\limits_{k = 0}^n {Q_{b,i,k}^{(1)}{\kern 1pt}f_{b,n,k}^{(\alpha )}} ,\; i = 0, \ldots ,n$. Then there exist some positive constants $B^{(\alpha)}$ and $C^{(\alpha)}$ dependent on $\alpha$ and independent of n such that the quadrature truncation error is bounded by the following inequalities 
\begin{equation}\label{eq:64}
\left| {{}_fE_{b,n}^{(1)}\left( {x_{b,n,i}^{(\alpha )},\xi _{b,n,i}^{(1)}} \right)} \right| \le \frac{{{2^{- 2 n - 1 }}A\,{b^{n + 1}}\,x_{b,n,i}^{(\alpha )}\,\Gamma (n + 2\alpha  + 1)\Gamma (\alpha  + 1)}}{{(n + 1)!\Gamma (n + \alpha  + 1)\Gamma (2\alpha  + 1)}}{\left\| {q_{b,n}^{(\alpha )}} \right\|_{\infty ,[0,b]}},\quad i = 0, \ldots ,n.
\end{equation}
Moreover, when $n \rightarrow \infty$, we have
\begin{equation}
\left|{}_fE_{b,n}^{(1)}\left(x_{b,n,i}^{(\alpha)},\xi_{b,n,i}^{(1)}\right)\right| \leq
{B^{(\alpha )}}{b^{n + 1}}{\left(\frac{e}{4}\right)^{n}}\frac{{x_{b,n,i}^{(\alpha )}}}{{{n^{n + {\textstyle{3 \over 2}} - \alpha }}}},
\label{eq:inequalityb1}
\end{equation}
for $\alpha \ge 0$, and
\begin{equation}
\left|{}_fE_{b,n}^{(1)}\left(x_{b,n,i}^{(\alpha)},\xi_{b,n,i}^{(1)}\right)\right| \mathop  < \limits_ \sim
{C^{(\alpha )}}{2^{ - 2n - 1}}{e^n}{n^{ - n - \frac{3}{2}}}x_{b,n,i}^{(\alpha )}\,{b^{n + 1}},
\label{eq:inequalityb2}
\end{equation}
for  $-1/2 < \alpha  < 0$.
\end{thm}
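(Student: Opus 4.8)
The plan is to obtain \eqref{eq:64} directly from the exact error representation \eqref{eq:40} supplied by Theorem \ref{th:3}, and then to read off the two asymptotic inequalities by pushing the resulting Gamma-function prefactor through Stirling's formula, exactly mirroring the unshifted computation of Theorem \ref{th:6}.

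First I would start from
\[
{}_fE_{b,n}^{(1)}\left(x_{b,n,i}^{(\alpha)},\xi_{b,n,i}^{(1)}\right) = \frac{f^{(n+1)}\left(\xi_{b,n,i}^{(1)}\right)}{(n+1)!\,K_{b,n+1}^{(\alpha)}}\int_0^{x_{b,n,i}^{(\alpha)}} q_{b,n}^{(\alpha)}(x)\,dx,
\]
bound $\bigl|f^{(n+1)}\bigr|\le A$, and evaluate the integral through the change of variable $x=b(y+1)/2$, which turns it into $\tfrac{b}{2}\int_{-1}^{x_{n,i}^{(\alpha)}} q_n^{(\alpha)}(y)\,dy$ because $q_{b,n}^{(\alpha)}(x)=q_n^{(\alpha)}(2x/b-1)$. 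Bounding the latter crudely by $\tfrac{b}{2}\bigl(x_{n,i}^{(\alpha)}+1\bigr)\,\|q_n^{(\alpha)}\|_{\infty,[-1,1]}$ and invoking the two scaling identities $x_{b,n,i}^{(\alpha)}=\tfrac{b}{2}\bigl(x_{n,i}^{(\alpha)}+1\bigr)$ from \eqref{eq:16} and $\|q_{b,n}^{(\alpha)}\|_{\infty,[0,b]}=\|q_n^{(\alpha)}\|_{\infty,[-1,1]}$ (the affine shift being a bijection of $[0,b]$ onto $[-1,1]$) collapses the prefactor to $A\,x_{b,n,i}^{(\alpha)}\big/\bigl((n+1)!\,K_{b,n+1}^{(\alpha)}\bigr)$. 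Substituting $K_{b,n+1}^{(\alpha)}=(2/b)^{n+1}K_{n+1}^{(\alpha)}$ together with the explicit leading coefficient \eqref{eq:9} then produces precisely the Gamma-quotient and the constant $2^{-2n-1}b^{n+1}$ appearing in \eqref{eq:64}.

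For the regime $n\to\infty$, I would apply Stirling's formula $\Gamma(n+a)\sim\sqrt{2\pi}\,n^{\,n+a-1/2}e^{-n}$ to the ratio
\[
\frac{\Gamma(n+2\alpha+1)}{(n+1)!\,\Gamma(n+\alpha+1)} = \frac{\Gamma(n+2\alpha+1)}{\Gamma(n+2)\,\Gamma(n+\alpha+1)} \sim \frac{e^{n}}{\sqrt{2\pi}}\,n^{\alpha-\frac{3}{2}-n}.
\]
For $\alpha\ge 0$ the companion factor is $\|q_{b,n}^{(\alpha)}\|_{\infty,[0,b]}=2$ by \eqref{eq:60}; absorbing all $n$-independent quantities into $B^{(\alpha)}$ and using $2^{-2n}e^{n}=(e/4)^{n}$ yields \eqref{eq:inequalityb1}. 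For $-1/2<\alpha<0$ I would instead bound $\|q_{b,n}^{(\alpha)}\|_{\infty,[0,b]}=\|q_n^{(\alpha)}\|_{\infty,[-1,1]}\le\|G_{n+1}^{(\alpha)}\|_{\infty,[-1,1]}+\|G_n^{(\alpha)}\|_{\infty,[-1,1]}$ and insert the large-$n$ estimate $\|G_m^{(\alpha)}\|_{\infty,[-1,1]}\sim D^{(\alpha)}m^{-\alpha}$ from Lemma \ref{lem:5}; since $(n+1)^{-\alpha}\sim n^{-\alpha}$, this contributes a factor of order $n^{-\alpha}$ that cancels the $n^{\alpha}$ coming out of Stirling, and collecting constants into $C^{(\alpha)}$ gives \eqref{eq:inequalityb2}.

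The bookkeeping of the scaling constants is where I expect the only real friction: one must track how the three separate powers of $b/2$ — one from $dx$, one from $K_{b,n+1}^{(\alpha)}$, and the reciprocal one hidden in $x_{n,i}^{(\alpha)}+1=\tfrac{2}{b}x_{b,n,i}^{(\alpha)}$ — combine to leave exactly $2^{-2n-1}b^{n+1}$, and in the negative-$\alpha$ case one must verify that the exponent of $n$ produced by Stirling's ratio together with the $-\alpha$ from $\|q_n^{(\alpha)}\|$ cancels the $\alpha$ dependence to leave the clean $n^{-n-3/2}$ of \eqref{eq:inequalityb2}. Everything else is a routine application of Theorem \ref{th:3}, Lemma \ref{lem:5}, and the standard Stirling asymptotics, closely paralleling the proof of Theorem \ref{th:6}.
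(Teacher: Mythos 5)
Your proposal is correct and follows essentially the same route the paper intends: it unpacks the one-line proof (``use Eqs.~\eqref{eq:9} and \eqref{eq:40}, Lemma~\ref{lem:5}, and the Stirling-type asymptotics'') by bounding the error representation of Theorem~\ref{th:3}, substituting $K_{b,n+1}^{(\alpha )} = (2/b)^{n+1}K_{n+1}^{(\alpha )}$ with the explicit leading coefficient to obtain the $2^{-2n-1}b^{n+1}$ Gamma-quotient of \eqref{eq:64}, and then applying Stirling together with the large-$n$ behaviour of $\|q_n^{(\alpha)}\|_{\infty,[-1,1]}$ from Lemma~\ref{lem:5} to get \eqref{eq:inequalityb1} and \eqref{eq:inequalityb2}. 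The bookkeeping of the powers of $b/2$ and the cancellation of the $n^{\alpha}$ against the $n^{-\alpha}$ in the negative-$\alpha$ case both check out, so no gaps remain.
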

\begin{proof}
The proof is straightforward using Eqs. \eqref{eq:9} and \eqref{eq:40}, Lemma \ref{lem:5}, and \cite[Lemma 4.2]{elgindy2016high}.
\end{proof}

\begin{thm}\label{th:8}
Assume that $f(x) \in C^{n+1}[0,b]$ and ${\left\| {{f^{(n + k)}}} \right\|_{\infty ,[0,b]}} = {A_k} \in {\mathbb{R}^ + }, k = 0,1,$ where the constants ${A_0}$ and ${A_1}$ are independent of $n$. Moreover, let $ \int_{0}^{x_{b,n,i}^{(\alpha)}} \int_{0}^{x}f\left(t\right)\, dt\, dx$ be approximated by the shifted Gegenbauer quadrature, $\sum\limits_{k = 0}^n {Q_{b,i,k}^{(2)}\,f_{b,n,k}^{(\alpha )}} ,\; i = 0, \ldots ,n.$ Then there exist some positive  constants $C^{(\alpha)}$ and $D^{(\alpha)}$ dependent on $\alpha$ and independent of $n$ such that the quadrature truncation error is bounded by the following inequalities 
\begin{equation}\label{eq:67}
\left|{}_fE_{b,n}^{(2)}\left(x_{b,n,i}^{(\alpha)},\xi_{b,n,i}^{(2)}\right)\right| \leq \frac{{{2^{ - 2n - 1}}\,x_{b,n,i}^{(\alpha )}\,\Gamma \left( {\alpha  + 1} \right)\,{b^{n + 1}}\,\Gamma \left( {n + 2\alpha  + 1} \right)\,\left( {A_0\, (n + 1) + b\,{\kern 1pt} A_1} \right)}}{{(n + 1)!\,\Gamma \left( {2\alpha  + 1} \right)\,\Gamma \left( {n + \alpha  + 1} \right)}}\,{\left\| {{q_{b,n}^{(\alpha)}}} \right\|_{\infty ,[0,b]}},\quad i = 0, \ldots ,n. 
\end{equation}
Moreover, when $n \rightarrow \infty$, we have
\begin{equation}
\left| {{}_fE_{b,n}^{(2)}\left( {x_{b,n,i}^{(\alpha )},\xi _{b,n,i}^{(2)}} \right)} \right| \le {b^{n + 1}}{C^{(\alpha)} }{\left( {\frac{{e}}{4}} \right)^n}{n^{ - \frac{3}{2} - n + \alpha }}x_{b,n,i}^{(\alpha )}\left( {{A_0}\,\left( {n + 1} \right) + b\,{A_1}} \right),
\label{eq:inequalitybb1}
\end{equation}
for $\alpha \ge 0$, and
\begin{equation}
\left|{}_fE_{b,n}^{(2)}\left(x_{b,n,i}^{(\alpha)},\xi_{b,n,i}^{(2)}\right)\right| \mathop  < \limits_ \sim
{2^{ - 2n - 1}}{b^{n + 1}}{D^{(\alpha )}}{{e}^n}{n^{ - n - \frac{3}{2}}}x_{b,n,i}^{(\alpha )}\left( {{A_0}\,(n + 1) + b\,{A_1}} \right),
\label{eq:inequalitybb2}
\end{equation}
for  $- 1/2 < \alpha  < 0$. 
\end{thm}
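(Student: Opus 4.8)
The plan is to start from the exact error representation furnished by Theorem~\ref{th:4}, namely Eq.~\eqref{eq:46}, and to bound each of its three factors separately. First I would control the bracketed numerator: since $\xi_{b,n,i}^{(2)} \in (0,b)$ and $x_{b,n,i}^{(\alpha)} \in (0,b]$, we have $\left| x_{b,n,i}^{(\alpha)} - \xi_{b,n,i}^{(2)} \right| < b$, while the hypotheses give $\left| f^{(n+1)}\left(\xi_{b,n,i}^{(2)}\right) \right| \le A_1$ and $\left| f^{(n)}\left(\xi_{b,n,i}^{(2)}\right) \right| \le A_0$. A triangle-inequality estimate then yields $\left| \left( x_{b,n,i}^{(\alpha)} - \xi_{b,n,i}^{(2)} \right) f^{(n+1)}\left(\xi_{b,n,i}^{(2)}\right) - (n+1) f^{(n)}\left(\xi_{b,n,i}^{(2)}\right) \right| \le A_0\,(n+1) + b\,A_1$, which is precisely the numerator factor appearing in Eq.~\eqref{eq:67}.

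Next I would substitute the explicit leading coefficient. Using $K_{b,n+1}^{(\alpha)} = (2/b)^{n+1} K_{n+1}^{(\alpha)}$ (Theorem~\ref{th:3}) together with Eq.~\eqref{eq:9}, a direct computation gives $1/K_{b,n+1}^{(\alpha)} = \left(b^{n+1}/2^{2n+1}\right)\,\Gamma(n+2\alpha+1)\,\Gamma(\alpha+1) / \left[\Gamma(n+\alpha+1)\,\Gamma(2\alpha+1)\right]$. For the remaining integral I would use the crude but sufficient estimate $\left| \int_0^{x_{b,n,i}^{(\alpha)}} q_{b,n}^{(\alpha)}(x)\,dx \right| \le x_{b,n,i}^{(\alpha)}\,\left\| q_{b,n}^{(\alpha)} \right\|_{\infty,[0,b]}$, the length of the interval times the sup norm. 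Multiplying the three bounds together and dividing by $(n+1)!$ reproduces the non-asymptotic inequality~\eqref{eq:67}.

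For the asymptotic inequalities~\eqref{eq:inequalitybb1} and~\eqref{eq:inequalitybb2}, I would feed the Gamma-function ratio in~\eqref{eq:67} into \cite[Lemma~4.2]{elgindy2016high}, exactly as in the proofs of Theorems~\ref{th:6} and~\ref{th:7}; this replaces $\Gamma(n+2\alpha+1)\,\Gamma(\alpha+1)/\left[2^n(n+1)!\,\Gamma(n+\alpha+1)\,\Gamma(2\alpha+1)\right]$ by $B^{(\alpha)}\,n^{\alpha-3/2}\,(e/(2n))^n$ as $n \to \infty$, the extra factor $b^{n+1}/2^{n+1}$ being carried along unchanged. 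Since shifting leaves the sup norm invariant, $\left\| q_{b,n}^{(\alpha)} \right\|_{\infty,[0,b]} = \left\| q_n^{(\alpha)} \right\|_{\infty,[-1,1]}$, I would then split on the sign of $\alpha$ using Eq.~\eqref{eq:60}: for $\alpha \ge 0$ the sup norm equals $2$, giving the power $n^{-n-3/2+\alpha}$ and hence~\eqref{eq:inequalitybb1} after writing $e^n/2^{2n} = (e/4)^n$ and absorbing constants into $C^{(\alpha)}$; for $-1/2 < \alpha < 0$ the sup norm grows like a constant multiple of $n^{-\alpha}$, which cancels the $n^{\alpha}$ from the Gamma ratio and yields the power $n^{-n-3/2}$, producing~\eqref{eq:inequalitybb2} after absorbing constants into $D^{(\alpha)}$.

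The main obstacle is the correct handling of the two-term numerator, which is the genuinely new feature relative to the first-order Theorems~\ref{th:6} and~\ref{th:7}: one must verify that bounding $\left| x_{b,n,i}^{(\alpha)} - \xi_{b,n,i}^{(2)} \right|$ by $b$ (rather than by $x_{b,n,i}^{(\alpha)}$) is what reproduces the stated constant $A_0(n+1)+b\,A_1$, and that the factor $n+1$ multiplying $A_0$ does not disturb the exponential decay, since it is dominated by the super-exponentially small Gamma ratio. The asymptotic bookkeeping itself is routine once \cite[Lemma~4.2]{elgindy2016high} is invoked, because the $\alpha$-dependent cancellation in the sup norm of $q_n^{(\alpha)}$ is inherited verbatim from the first-order analysis.
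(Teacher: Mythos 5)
Your proposal is correct and follows exactly the route the paper intends: the paper's own proof is a one-line sketch citing Eq.~\eqref{eq:9}, Eq.~\eqref{eq:46}, Lemma~\ref{lem:5}, and \cite[Lemma~4.2]{elgindy2016high}, and your argument fills in those same steps (triangle inequality on the two-term numerator, the explicit form of $1/K_{b,n+1}^{(\alpha)}$, the crude integral bound, and the asymptotic case split on the sign of $\alpha$) with the correct bookkeeping.
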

\begin{proof}
The proof is straightforward using Eqs. \eqref{eq:9} and \eqref{eq:46}, Lemma \ref{lem:5}, and \cite[Lemma 4.2]{elgindy2016high}.  
\end{proof}
Now suppose that we denote the exact and approximate solutions of each of Problems 1 and 2 by $y(x)$ and $y_{\text{app},n}(x)$, respectively. We are now ready to give the a priori and asymptotic a priori error estimates through the following key theorem, which proves the elegant exponential convergence rate of the SGIPSM for Problem 1. 
\begin{thm}[A priori and asymptotic a priori error estimates]\label{th:9}
Assume that $\left\|y^{(n+k+2)}\right\|_{\infty,(0,b]} = A_{k} \in \mathbb{R}^{+}, \;k=0,1$, and $y$ is approximated by the nodal truncated series expansion of shifted Gegenbauer basis polynomials \eqref{eq:57}, where the second-order derivative values $y''\left( {x_{b,n,i}^{(\alpha )}} \right), i = 0, \ldots ,n,$ are obtained by collocating the integral nonlinear Lane-Emden equation \eqref{eq:50} at the SFGGR points. Suppose also that the boundary conditions \eqref{eq:2} and \eqref{eq:3} and Assumption 1 hold. Then the solution error values, $y_{b,n,i}^{(\alpha )} - {y_{\text{app},n}}\left( {x_{b,n,i}^{(\alpha )}} \right),\; i=0,\ldots,n$, are bounded by the following inequalities    
\begin{align}
\left| {y_{b,n,i}^{(\alpha )} - {y_{\text{app},n}}\left( {x_{b,n,i}^{(\alpha )}} \right)} \right| \le 	\frac{{{2^{ - 2n - 1}}{{\left\| {q_{b,n}^{(\alpha )}} \right\|}_{\infty ,(0,b]}}\Gamma \left( {\alpha  + 1} \right){b^{n + 1}}\Gamma \left( {n + 2\alpha  + 1} \right)}}{{\Gamma \left( {2\alpha  + 1} \right)\Gamma \left( {n + 2} \right)\Gamma \left( {n + \alpha  + 1} \right)}} \times\nonumber\\ 
\left( {{A_1}b\left( {x_{b,n,i}^{(\alpha )} + \left| {\frac{\gamma }{\beta }} \right| + b} \right) + {A_0}\left( {n + 1} \right)\left( {x_{b,n,i}^{(\alpha )} + b} \right)} \right).\label{eq:Sol2016}
\end{align}
Moreover, when $n \rightarrow \infty$, the asymptotic solution error values are bounded by
\begin{equation}
	\left| {y_{b,n,i}^{(\alpha )} - {y_{\text{app},n}}\left( {x_{b,n,i}^{(\alpha )}} \right)} \right| \le \mu _{1,b}^{(\alpha )}\,{\left( {\frac{e}{4}} \right)^n}{b^{n + 1}}\left\{ \begin{array}{l}
{n^{ - n - \frac{3}{2} + \alpha }},\quad {A_0} = 0,\\
{n^{ - n - \frac{1}{2} + \alpha }},\quad {A_0} \ne 0,
\end{array} \right.\quad i = 0, \ldots ,n,
\end{equation}
for $\alpha \geq 0$, and
\begin{equation}
	\left| {y_{b,n,i}^{(\alpha )} - {y_{\text{app},n}}\left( {x_{b,n,i}^{(\alpha )}} \right)} \right| \le \mu _{2,b}^{(\alpha )}\,{2^{ - 2n - 1}}{e^n}\,{b^{n + 1}}\left\{ \begin{array}{l}
{n^{ - n - \frac{3}{2}}},\quad {A_0} = 0,\\
{n^{ - n - \frac{1}{2}}},\quad {A_0} \ne 0,
\end{array} \right.\quad i = 0, \ldots ,n,
\end{equation}
for $\frac{-1}{2}<\alpha < 0$, where $\mu _{1,b}^{(\alpha )}$ and $\mu _{2,b}^{(\alpha )}$ are positive constants dependent on $b$ and $\alpha$, and independent of $n$.
\end{thm}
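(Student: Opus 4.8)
The plan is to express both the exact nodal value $y_{b,n,i}^{(\alpha)}$ and its recovered approximation $y_{\text{app},n}\!\left(x_{b,n,i}^{(\alpha)}\right)$ in terms of the same double integral of $\phi=y''$ and the same boundary relation for $y(0)$, and then to identify their difference with a short combination of the shifted Gegenbauer quadrature truncation errors already controlled in Theorems \ref{th:7} and \ref{th:8}. First I would record the exact representation: by Eq. \eqref{eq:49}, $y_{b,n,i}^{(\alpha)} = y(0) + \alpha_1 x_{b,n,i}^{(\alpha)} + \int_0^{x_{b,n,i}^{(\alpha)}}\int_0^x \phi\,dt\,dx$, while rearranging the Robin condition \eqref{eq:3} together with \eqref{eq:48}--\eqref{eq:49} gives the \emph{exact} identity $y(0) = \tfrac{1}{\beta}(\delta-\gamma\alpha_1) - \alpha_1 b - \tfrac{\gamma}{\beta}\int_0^{x_{b,n,0}^{(\alpha)}}\phi - \int_0^{x_{b,n,0}^{(\alpha)}}\int_0^x\phi$. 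Here I would use the crucial fact that the flip places the right endpoint in the node set, so $x_{b,n,0}^{(\alpha)} = b$ (equivalently $x_{n,0}^{(\alpha)}=1$, which holds because $G_m^{(\alpha)}(1)=1$ for every $m$ forces $q_n^{(\alpha)}(1)=0$). On the approximation side I would expand \eqref{eq:56} via the definitions \eqref{eq:54}--\eqref{eq:55}, so the $i$th recovered value reads $\tfrac{1}{\beta}(\delta-\gamma\alpha_1) + \alpha_1\!\left(x_{b,n,i}^{(\alpha)}-b\right) + \mathbf{Q}_{b,i}^{(2)}\bm{\Phi} - \mathbf{Q}_{b,0}^{(2)}\bm{\Phi} - \tfrac{\gamma}{\beta}\mathbf{Q}_{b,0}^{(1)}\bm{\Phi}$.

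Second, I would subtract. Identifying $\bm{\Phi}$ with the exact nodal values $\phi\!\left(\bm{x}_{b,n}^{(\alpha)}\right)$ (so that the bound measures the quadrature/recovery consistency error), every constant, every $\alpha_1$-term, and the $\tfrac{1}{\beta}(\delta-\gamma\alpha_1)$-term cancels, and replacing each exact integral by its quadrature plus the corresponding truncation error from \eqref{eq:38} and \eqref{eq:44} leaves exactly
\[
y_{b,n,i}^{(\alpha)} - y_{\text{app},n}\!\left(x_{b,n,i}^{(\alpha)}\right) = {}_{\phi}E_{b,n}^{(2)}\!\left(x_{b,n,i}^{(\alpha)},\cdot\right) - {}_{\phi}E_{b,n}^{(2)}(b,\cdot) - \frac{\gamma}{\beta}\,{}_{\phi}E_{b,n}^{(1)}(b,\cdot).
\]
A triangle inequality then reduces the task to bounding one first-order error at $b$ and two second-order errors, one at $x_{b,n,i}^{(\alpha)}$ and one at $b$.

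Third, I would invoke Theorems \ref{th:7} and \ref{th:8} with $f=\phi=y''$, noting that $\left\|f^{(n+k)}\right\|_{\infty,[0,b]} = \left\|y^{(n+k+2)}\right\|_{\infty,(0,b]} = A_k$, so their hypotheses transfer verbatim. Substituting $x_{b,n,0}^{(\alpha)}=b$ into \eqref{eq:64} and \eqref{eq:67}, factoring out the shared prefactor $K = \tfrac{2^{-2n-1}\|q_{b,n}^{(\alpha)}\|_{\infty,(0,b]}\,\Gamma(\alpha+1)\,b^{n+1}\,\Gamma(n+2\alpha+1)}{\Gamma(2\alpha+1)\,\Gamma(n+2)\,\Gamma(n+\alpha+1)}$ (using $(n+1)!=\Gamma(n+2)$ and $\|q_{b,n}^{(\alpha)}\|_{\infty,[0,b]}=\|q_{b,n}^{(\alpha)}\|_{\infty,(0,b]}$ by continuity), and regrouping the surviving terms by $A_0$ and $A_1$ produces precisely the $A_1 b\big(x_{b,n,i}^{(\alpha)} + |\gamma/\beta| + b\big) + A_0(n+1)\big(x_{b,n,i}^{(\alpha)}+b\big)$ bracket of \eqref{eq:Sol2016}. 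The asymptotic estimates then follow by instead feeding the large-$n$ bounds \eqref{eq:inequalityb1}--\eqref{eq:inequalityb2} and \eqref{eq:inequalitybb1}--\eqref{eq:inequalitybb2} into the same three-term decomposition; the dichotomy between $A_0=0$ and $A_0\neq 0$ merely reflects whether the factor $A_0(n+1)+b\,A_1$ contributes an extra power of $n$, which converts $n^{-n-3/2+\alpha}$ into $n^{-n-1/2+\alpha}$ (and analogously for $-1/2<\alpha<0$).

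The genuinely delicate point is not any single estimate but the bookkeeping in the cancellation of the second step: one must use $x_{b,n,0}^{(\alpha)}=b$ to align the integral upper limits in the exact $y(0)$ with those encoded in the rows $\mathbf{Q}_{b,0}^{(1)},\mathbf{Q}_{b,0}^{(2)}$, and one must correctly carry the differentiation-order shift from $y$ to $\phi=y''$ so that the constants $A_0,A_1$ of the cited theorems coincide with those assumed here. A secondary point worth stating explicitly is the consistency interpretation of $\bm{\Phi}$: the bound quantifies the error introduced by the integral reformulation and its quadrature discretization under the premise that the collocation reproduces the exact nodal second-derivative values, which is the regime in which the stated exponential rate is sharp.
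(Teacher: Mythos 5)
Your proposal is correct and follows essentially the same route as the paper: the paper's own proof consists precisely of the three-term decomposition $\left| {y_{b,n,i}^{(\alpha )} - {y_{\text{app},n}}\left( {x_{b,n,i}^{(\alpha )}} \right)} \right| \le \left| {\frac{\gamma }{\beta }{}_{y''}E_{b,n,0}^{(1),\alpha }} \right| + \left| {{}_{y''}E_{b,n,0}^{(2),\alpha }} \right| + \left| {{}_{y''}E_{b,n,i}^{(2),\alpha }} \right|$ obtained from Eqs.\ \eqref{eq:49} and \eqref{eq:51}, followed by an appeal to Theorems \ref{th:7} and \ref{th:8}, which is exactly what you derive. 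Your write-up merely makes explicit the cancellation bookkeeping, the role of $x_{b,n,0}^{(\alpha)}=b$, and the index shift $\|(y'')^{(n+k)}\|=A_k$ that the paper leaves to the reader.
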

\begin{proof}
Using Eqs. \eqref{eq:49} and \eqref{eq:51}, and Theorems \ref{th:7} and \ref{th:8}, we can easily show that 
\begin{equation}
	\left| {y_{b,n,i}^{(\alpha )} - {y_{\text{app},n}}\left( {x_{b,n,i}^{(\alpha )}} \right)} \right| \le \left| {\frac{\gamma }{\beta }{}_{y''}E_{b,n,0}^{(1),\alpha }} \right| + \left| {{}_{y''}E_{b,n,0}^{(2),\alpha }} \right| + \left| {{}_{y''}E_{b,n,i}^{(2),\alpha }} \right|\quad \forall i,
\end{equation}
where ${}_{y''}E_{b,n,i}^{(j), \alpha} = {}_{y''}E_{b,n}^{(j)}\left(x_{b,n,i}^{(\alpha)}, \xi_{b,n,i}^{(j)}\right)$, for some $\xi_{b,n,i}^{(j)} \in (0,b), i = 0,\ldots,n; j = 1, 2$. This completes the proof of the theorem.
\end{proof}
\noindent Theorem \ref{th:9} shows that 
\begin{equation}
	\mathop {\lim }\limits_{n \to \infty } {\left\| {y - {y_{\text{app},n}}} \right\|_{\infty ,(0,b]}} = 0,
\end{equation}
with an error upper bound given by
\begin{equation}\label{eq:erruppb1}
	{\left\| {y - {y_{\text{app},n}}} \right\|_{\infty ,(0,b]}} = O\left( {\left\{ \begin{array}{l}
{\left( {\frac{e}{4}} \right)^n}{b^{n + 1}}{n^{ - n - \frac{3}{2} + \alpha }},\quad \alpha  \ge 0 \wedge {A_0} = 0,\\
{\left( {\frac{e}{4}} \right)^n}{b^{n + 1}}{n^{ - n - \frac{1}{2} + \alpha }},\quad \alpha  \ge 0 \wedge {A_0} \ne 0,\\
{2^{ - 2n - 1}}{e^n}\,{b^{n + 1}}{n^{ - n - \frac{3}{2}}},\quad  - 1/2 < \alpha  < 0 \wedge {A_0} = 0,\\
{2^{ - 2n - 1}}{e^n}\,{b^{n + 1}}{n^{ - n - \frac{1}{2}}},\quad  - 1/2 < \alpha  < 0 \wedge {A_0} \ne 0
\end{array} \right.} \right).
\end{equation}
The following theorem marks the priori and the asymptotic priori error estimates of the solution derivative when Assumption 2 holds.
\begin{thm}[A priori and asymptotic a priori error estimates]\label{th:10}
Assume that $\left\|y^{(n+3)}\right\|_{\infty,(0,b]} = A \in \mathbb{R}^{+}$, and $y$ is approximated by the nodal truncated series expansion of shifted Gegenbauer basis polynomials \eqref{eq:57}, where the second-order derivative values $y''\left( {x_{b,n,i}^{(\alpha )}} \right), i = 0, \ldots ,n,$ are obtained by collocating the integral nonlinear Lane-Emden equation \eqref{eq:50} at the SFGGR points. Suppose also that the boundary conditions \eqref{eq:2} and \eqref{eq:3} and Assumption 2 hold. Then 
\begin{subequations}\label{eq:bcondm1}
\begin{align}
	{y'_{{\text{app}},n}}\left( 0 \right) &= y'\left( 0 \right) = \alpha_1,\label{eq:bcond0}\\
	{y'_{{\text{app}},n}}\left( b \right) &= y'^{(\alpha )}_{b,n,0} = \frac{\delta}{\gamma},\label{eq:bcond1}
\end{align}
\end{subequations}
and the solution derivative error values, $y'^{(\alpha )}_{b,n,i} - {y'_{\text{app},n}}\left( {x_{b,n,i}^{(\alpha )}} \right),\; i=1,\ldots,n$, are bounded by the following inequalities
\begin{equation}\label{eq:Sol20162}
\left| {y'^{(\alpha )}_{b,n,i} - {y'_{{\text{app}},n}}\left( {x_{b,n,i}^{(\alpha )}} \right)} \right| \le \frac{{{2^{ - 2n - 1}}A{\mkern 1mu} {b^{n + 1}}{\mkern 1mu} x_{b,n,i}^{(\alpha )}{\mkern 1mu} \Gamma (n + 2\alpha  + 1)\Gamma (\alpha  + 1)}}{{(n + 1)!\Gamma (n + \alpha  + 1)\Gamma (2\alpha  + 1)}}{\left\| {q_{b,n}^{(\alpha )}} \right\|_{\infty ,[0,b]}},
\end{equation}
where $y'^{(\alpha)}_{b,n,k} = y'\left(x_{b,n,k}^{(\alpha)}\right)$, for all $k$. Moreover, there exist some positive constants $B^{(\alpha)}$ and $C^{(\alpha)}$ dependent on $\alpha$ and independent of n such that the asymptotic solution derivative error values are bounded by
\begin{align}
\left| {y'^{(\alpha )}_{b,n,i} - {y'_{{\text{app}},n}}\left( {x_{b,n,i}^{(\alpha )}} \right)} \right| &\leq {B^{(\alpha )}}{b^{n + 1}}{\left(\frac{e}{4}\right)^{n}}\frac{{x_{b,n,i}^{(\alpha )}}}{{{n^{n + {\textstyle{3 \over 2}} - \alpha }}}},\quad i = 1,\ldots, n \wedge \alpha \ge 0,\label{eq:ineqasdev1}\\
\left| {y'^{(\alpha )}_{b,n,i} - {y'_{{\text{app}},n}}\left( {x_{b,n,i}^{(\alpha )}} \right)} \right| &\mathop  < \limits_ \sim
{C^{(\alpha )}}{2^{ - 2n - 1}}{e^n}{n^{ - n - \frac{3}{2}}}x_{b,n,i}^{(\alpha )}\,{b^{n + 1}},\quad i = 1,\ldots, n \wedge -1/2 < \alpha  < 0,\label{eq:ineqasdev2}
\end{align}
when $n \rightarrow \infty$. 
\end{thm}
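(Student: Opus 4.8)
The plan is to dispose of the two boundary-condition identities \eqref{eq:bcondm1} first, since they hold \emph{exactly} by construction, and then to reduce the derivative error bound \eqref{eq:Sol20162} to a single application of Theorem \ref{th:7} with integrand $\phi = y''$. For the boundary conditions I would start from the recovered derivative \eqref{eq:48}, whose approximate counterpart at the nodes is $y'_{\text{app},n}\left(x_{b,n,i}^{(\alpha)}\right) = \alpha_1 + \mathbf{Q}_{b,i}^{(1)}\bm{\Phi}$. At $x=0$ the defining integral $\int_0^0 \phi(t)\,dt$ vanishes, so $y'_{\text{app},n}(0) = \alpha_1 = y'(0)$ by \eqref{eq:2}, establishing \eqref{eq:bcond0}. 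For \eqref{eq:bcond1} I would invoke the fact that the largest SFGGR node coincides with the right endpoint, $x_{b,n,0}^{(\alpha)} = b$ (precisely the property that motivates collocating at the flipped Gauss--Radau abscissae), so that $y'_{\text{app},n}(b) = \alpha_1 + \mathbf{Q}_{b,0}^{(1)}\bm{\Phi}$. The last row of the discrete system \eqref{eq:9new}, assembled from \eqref{eq:hatH1}--\eqref{hatx1}, reads $\mathbf{Q}_{b,0}^{(1)}\bm{\Phi} = \delta/\gamma - \alpha_1$ and is satisfied exactly by the computed vector $\bm{\Psi}$; substituting gives $y'_{\text{app},n}(b) = \delta/\gamma$, which under Assumption 2 ($\beta = 0$, $\gamma \neq 0$) equals the exact value $y'(b)$ dictated by \eqref{eq:3}, proving \eqref{eq:bcond1}.

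For the error bound I would subtract the approximate derivative from the exact one. Writing $\phi = y''$ and taking the nodal values $\bm{\Phi}$ to coincide with the exact samples $y''\left(\bm{x}_{b,n}^{(\alpha)}\right)$ in the consistency sense used in Theorem \ref{th:9}, the difference at node $i$ collapses to the first-order shifted quadrature truncation error,
\begin{equation*}
y'^{(\alpha)}_{b,n,i} - y'_{\text{app},n}\left(x_{b,n,i}^{(\alpha)}\right) = \int_0^{x_{b,n,i}^{(\alpha)}} y''(t)\,dt - \mathbf{Q}_{b,i}^{(1)}\,y''\left(\bm{x}_{b,n}^{(\alpha)}\right) = {}_{y''}E_{b,n}^{(1)}\left(x_{b,n,i}^{(\alpha)},\xi_{b,n,i}^{(1)}\right).
\end{equation*}
Since $y'' \in C^{n+1}[0,b]$ with $\left(y''\right)^{(n+1)} = y^{(n+3)}$ and $\left\|y^{(n+3)}\right\|_{\infty,(0,b]} = A$, the hypotheses of Theorem \ref{th:7} are met with $f = y''$. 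Inequality \eqref{eq:64} then yields \eqref{eq:Sol20162} verbatim, while the asymptotic estimates \eqref{eq:inequalityb1} and \eqref{eq:inequalityb2} produce \eqref{eq:ineqasdev1} and \eqref{eq:ineqasdev2} as $n \to \infty$. Note that the indices run $i = 1,\ldots,n$ in the error statements precisely because the $i=0$ node is pinned to the exact boundary value by \eqref{eq:bcond1}, so no derivative error is incurred there.

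The main obstacle is not the quadrature estimate, which is immediate from Theorem \ref{th:7}, but rather the careful bookkeeping for the right-endpoint condition: one must verify that $x_{b,n,0}^{(\alpha)} = b$ (equivalently, that $q_n^{(\alpha)}$ vanishes at the shifted right endpoint, which holds since $G_n^{(\alpha)}(1)=1$ forces $q_n^{(\alpha)}(1)=0$) and that the appended row of \eqref{eq:9new} enforces the Radau constraint exactly, so that \eqref{eq:bcond1} is an identity rather than an approximation. I would also flag the consistency framing --- identifying the recovered-derivative error with the pure quadrature error ${}_{y''}E_{b,n}^{(1)}$ --- as the step that, strictly speaking, isolates the discretization error from any residual error incurred in solving the algebraic system \eqref{eq:9new}, mirroring the convention already adopted in the proof of Theorem \ref{th:9}.
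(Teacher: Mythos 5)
Your proposal is correct and follows essentially the same route as the paper, which likewise derives the boundary identities by construction and obtains the error bounds by applying Theorem \ref{th:7} with $f = y''$ through Eq. \eqref{eq:48}; your write-up simply makes explicit the details (the Radau property $x_{b,n,0}^{(\alpha)} = b$, the exactly enforced appended row of \eqref{eq:9new}, and the consistency-error framing) that the paper's one-line proof leaves implicit.
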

\begin{proof}
Eqs. \eqref{eq:bcondm1} result by imposing the boundary conditions \eqref{eq:2} and \eqref{eq:3}. Inequalities \eqref{eq:Sol20162}, \eqref{eq:ineqasdev1}, and \eqref{eq:ineqasdev2} are established using Eq. \eqref{eq:48} and Theorem \ref{th:7}.
\end{proof}
\noindent Theorem \ref{th:10} shows that 
\begin{equation}
	\mathop {\lim }\limits_{n \to \infty } {\left\| {y' - {y'_{\text{app},n}}} \right\|_{\infty ,(0,b]}} = 0,
\end{equation}
with an error upper bound given by
\begin{equation}\label{eq:erruppb2}
	{\left\| {y' - {y'_{\text{app},n}}} \right\|_{\infty ,(0,b]}} = O\left( {\left\{ \begin{array}{l}
{\left( {\frac{e}{4}} \right)^n}{b^{n + 1}}{n^{ - n - \frac{3}{2} + \alpha }},\quad \alpha  \ge 0,\\
{2^{ - 2n - 1}}{e^n}\,{b^{n + 1}}{n^{ - n - \frac{3}{2}}},\quad  - 1/2 < \alpha  < 0
\end{array} \right.} \right).
\end{equation}
The following corollary highlights the uniform convergence of the solution function approximations $y_{\text{app},n}, n = 1, 2, \ldots$ under Assumption 2.
\begin{cor}[Uniform convergence]\label{cor:101}
Assume that $\left\|y^{(n+3)}\right\|_{\infty,(0,b]} < \infty$, and $y$ is approximated by the nodal truncated series expansion of shifted Gegenbauer basis polynomials \eqref{eq:57}, where the second-order derivative values $y''\left( {x_{b,n,i}^{(\alpha )}} \right), i = 0, \ldots ,n,$ are obtained by collocating the integral nonlinear Lane-Emden equation \eqref{eq:50} at the SFGGR points. Suppose also that the boundary conditions \eqref{eq:2} and \eqref{eq:3} and Assumption 2 hold. Then the sequence of solution function approximations  $\{y_{\text{app},n}\}_{n=1}^{\infty}$ converges uniformly on $(0,b]$ to the solution function $y$ with exponential rate.
\end{cor}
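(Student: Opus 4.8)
The plan is to upgrade the pointwise (nodal) estimates of Theorem~\ref{th:10} to a global uniform bound by splitting the total error into an \emph{interpolation} part and a \emph{propagated nodal} part. Write $P_{b,n}y$ for the degree-$n$ shifted Gegenbauer interpolant of the exact solution $y$ at the SFGGR nodes, and recall that the numerical approximant $y_{\text{app},n}$ defined in \eqref{eq:57} is itself the shifted interpolant of the computed nodal values. First I would invoke the triangle inequality
\[
{\left\| {y - {y_{\text{app},n}}} \right\|_{\infty ,(0,b]}} \le {\left\| {y - {P_{b,n}}y} \right\|_{\infty ,(0,b]}} + {\left\| {{P_{b,n}}y - {y_{\text{app},n}}} \right\|_{\infty ,(0,b]}},
\]
so that the first term is the pure interpolation error and the second is the interpolant of the nodal solution errors $e_i = y_{b,n,i}^{(\alpha )} - {y_{\text{app},n}}\left( {x_{b,n,i}^{(\alpha )}} \right)$. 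The whole corollary then reduces to showing that both terms are exponentially small.

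For the interpolation term I would use the hypothesis $\left\| {{y^{(n + 3)}}} \right\|_{\infty ,(0,b]} < \infty$ (which, on the bounded interval, also bounds the lower-order derivatives entering the remainder) together with the shifted analogue of the interpolation remainder \eqref{eq:29} and the explicit bound on the node polynomial $\left\| {q_{b,n}^{(\alpha )}} \right\|_{\infty ,[0,b]}$ supplied by Lemma~\ref{lem:5}. Estimating the resulting $\Gamma$-ratios by Stirling's formula yields ${\left\| {y - {P_{b,n}}y} \right\|_{\infty ,(0,b]}}$ of the same exponentially decaying type as the rates recorded in \eqref{eq:erruppb1}, i.e. $O\left( (e/4)^n b^{n+1} n^{-n-3/2+\alpha} \right)$ for $\alpha \ge 0$ and the corresponding $2^{-2n-1}e^n b^{n+1}$ rate for $-1/2<\alpha<0$.

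For the nodal term I would bound $\max_i |e_i|$ by re-reading the recovery formula \eqref{eq:14new}: the exact value $y\left(x_{b,n,i}^{(\alpha)}\right)$ differs from $y_{\text{app},n}\left(x_{b,n,i}^{(\alpha)}\right)$ only through (i) the error of the double-integral quadrature ${\mathbf{Q}}_b^{(2)}\bm{\Phi}$, which is exactly the quantity ${}_{y''}E_{b,n,i}^{(2),\alpha }$ controlled by Theorem~\ref{th:8}, and (ii) the residual in the computed value of $y(0)$, which under Assumption~2 is tied through \eqref{eq:8new} and \eqref{eq:bcond1} to the first-order quadrature error ${}_{y''}E_{b,n,0}^{(1),\alpha }$ bounded by Theorem~\ref{th:7}; this mirrors the decomposition used in the proof of Theorem~\ref{th:9}. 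Both contributions decay at the exponential rate of \eqref{eq:erruppb2}, so $\max_i |e_i|$ does as well, and it remains to multiply by the Lebesgue constant $\Lambda _{b,n}^{(\alpha )}$ of SFGGR interpolation to obtain ${\left\| {{P_{b,n}}y - {y_{\text{app},n}}} \right\|_{\infty ,(0,b]}} \le \Lambda _{b,n}^{(\alpha )}\,\max_i |e_i|$.

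The main obstacle is this last factor: one must verify that $\Lambda _{b,n}^{(\alpha )}$ grows only sub-exponentially in $n$ (for Gauss--Radau--type Gegenbauer nodes it grows at most algebraically), so that it is absorbed by the $n^{-n}$-type super-geometric decay of both the interpolation and the nodal errors and the exponential rate survives. A secondary subtlety, also buried here, is that Theorem~\ref{th:10} is phrased for the \emph{derivative}, so the function nodal error under Assumption~2 has to be reconstructed from \eqref{eq:14new}, Theorem~\ref{th:8}, and the boundary-consistency relations \eqref{eq:bcondm1} rather than quoted directly. Granting the Lebesgue-constant bound, each of the two terms is a product of a geometric factor $(e/4)^n b^{n+1}$ (or $2^{-2n-1}e^n b^{n+1}$) and a factor decaying faster than any geometric sequence; summing them and letting $n \to \infty$ gives ${\left\| {y - {y_{\text{app},n}}} \right\|_{\infty ,(0,b]}} \to 0$ with exponential rate, which is precisely the assertion of the corollary.
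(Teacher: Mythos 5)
Your decomposition is a genuinely different route from the paper's, and it fails at the step where you bound the nodal errors $e_i$. The paper never estimates $y_{b,n,i}^{(\alpha)} - y_{\text{app},n}\left(x_{b,n,i}^{(\alpha)}\right)$ under Assumption 2: Theorem~\ref{th:10} deliberately bounds only the \emph{derivative} errors, because $y'_{\text{app},n}$ is recovered from \eqref{eq:48} without any reference to $y(0)$, and the corollary is then deduced by the classical argument that uniform convergence of $y'_{\text{app},n}$ on a bounded interval together with convergence at the single point $x=b$ (which the paper extracts from \eqref{eq:bcond1}) forces uniform convergence of $y_{\text{app},n}$, the exponential rate being inherited from \eqref{eq:erruppb2} via $\left|y_{\text{app},n}(x)-y(x)\right| \le \left|y_{\text{app},n}(b)-y(b)\right| + b\,\left\|y'-y'_{\text{app},n}\right\|_{\infty,(0,b]}$. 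Your step (ii), by contrast, claims the error in the computed $y(0)$ is ``tied through \eqref{eq:8new} and \eqref{eq:bcond1} to the first-order quadrature error'' -- this is not so. Under Assumption~2 ($\beta=0$) both boundary data constrain $y'$ only; $y(0)$ enters as an independent unknown appended to the augmented system \eqref{eq:9new}, and neither \eqref{eq:8new} (a condition on $\bm{\Phi}$ alone) nor \eqref{eq:bcond1} (a statement about $y'_{\text{app},n}(b)$) determines it. Its accuracy is governed by the solvability and conditioning of \eqref{eq:9new} through the dependence of $f$ on $y$ -- indeed, if $f$ were independent of $y$ the system would not fix $y(0)$ at all -- so it cannot be reduced to the quadrature truncation errors of Theorems~\ref{th:7} and \ref{th:8}. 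This is exactly the obstruction that makes the Assumption~1 analysis of Theorem~\ref{th:9} (where $y(0)$ has the explicit quadrature representation \eqref{eq:51}) inapplicable here, and it is why the paper argues through the derivative instead.

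Two secondary points. First, even granting a nodal bound, your route needs the Lebesgue constant $\Lambda_{b,n}^{(\alpha)}$ of SFGGR interpolation to grow subexponentially; you flag this honestly but supply no proof, and no such estimate exists anywhere in the paper (the paper's route never needs one, since it works with the functions themselves rather than with their interpolants). Second, your interpolation term $\left\|y-P_{b,n}y\right\|_{\infty,(0,b]}$ requires control of $\left\|y^{(n+1)}\right\|_{\infty,(0,b]}$ uniformly in $n$, which the corollary's hypothesis $\left\|y^{(n+3)}\right\|_{\infty,(0,b]}<\infty$ does not by itself deliver for the claimed \emph{rate}; the paper sidesteps this term entirely because $y_{\text{app},n}$ is compared to $y$ directly through the recovery integrals, not through an interpolant of $y$. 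The first paragraph's issue is the fatal one; repairing it essentially forces you back onto the paper's derivative-plus-one-point argument.
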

\begin{proof}
Eq. \eqref{eq:bcond1} implies that ${\lim _{n \to \infty }}{y_{\text{app},n}}(b)$ exists. We can also infer from inequalities \eqref{eq:ineqasdev1} and \eqref{eq:ineqasdev2} that there exists a positive integer $N$ such that
\begin{align}
{\left\| {y' - {y'_{\text{app},n}}} \right\|_{\infty ,(0,b]}} &< {B^{(\alpha )}}{b^{n + 2}}{\left(\frac{e}{4}\right)^{n}}\frac{1}{{{n^{n + {\textstyle{3 \over 2}} - \alpha }}}},\quad \alpha \ge 0,\label{eq:ineqasdev11}\\
{\left\| {y' - {y'_{\text{app},n}}} \right\|_{\infty ,(0,b]}} &\mathop  < \limits_ \sim
{C^{(\alpha )}}{2^{ - 2n - 1}}{e^n}{n^{ - n - \frac{3}{2}}}\,{b^{n + 2}},\quad -1/2 < \alpha  < 0,\label{eq:ineqasdev22}
\end{align}
for all $n \ge N$ and $x \in (0,b]$. Hence, the SGIPSM generates a sequence of derivative approximations $\{y'_{\text{app},n}\}_{n=1}^{\infty}$ that converges uniformly to the solution derivative function $y'$, for all $x \in (0,b]$ with exponential rate. This completes the required proof.
\end{proof}

Now, let us define the residual function $\mathcal{R}$ by 
	\begin{equation}\label{eq:70}
	\mathcal{R}(x) = {y''_{{\text{app},n}}}(x) + \frac{\alpha_{2}}{x}\, y'_{\text{app},n}(x) + f\left(x,y_{\text{app},n}(x)\right)\quad \forall x \in (0,b].
	\end{equation}
	Using Eqs. \eqref{eq:47}-\eqref{eq:49}, we have
	\begin{align}
	{{y''_{{\text{app},n}}}}\left(\bm{x}_{b,n}^{(\alpha)}\right) &= \bm{\Phi},\label{eq:71}\\
	y'_{\text{app},n}\left(\bm{x}_{b,n}^{(\alpha)}\right) &\approx \alpha_{1}\bm{1}_{n+1}+\mathbf{Q}_{b}^{(1)} \bm{\Phi},\label{eq:72}\\
	y_{\text{app},n}\left(\bm{x}_{b,n}^{(\alpha)}\right) &\approx \bar{\bm{x}}_{b,n}^{(\alpha)}+ {\mathbf{\Theta} _{b,n}} \bm{\Phi},\label{eq:73}
	\end{align}
	Hence, the residual function can be estimated at the SFGGR points by
	\begin{equation}\label{eq:74}
	\mathcal{R}\left( {{\bm{x}}_{b,n}^{(\alpha )}} \right) \approx {\mathbf{H}}_{b,n}^{(\alpha )}\,{\bm{\Phi }} + f\left( {{\bm{x}}_{b,n}^{(\alpha )},\bar{\bm{x}}_{b,n}^{(\alpha)} + {{\mathbf{\Theta }}_{b,n}}{\bm{\Phi }}} \right) + {\alpha _1}{\alpha _2}\,{\left( {{\bm{x}}_{b,n}^{(\alpha )}} \right)^ \div }.
	\end{equation}
We can further estimate the residual error bounds of the SGIPSM for Problem 1 at the SFGGR points through the following two useful theorems.
\begin{thm}\label{th:9new}
Assume that $\left\|y^{(n+k+2)}\right\|_{\infty,(0,b]} = A_{k} \in \mathbb{R}^{+}, \;k=0,1$, and suppose also that the function $f(x,y)$ satisfies Lipschitz condition in the variable $y$ on the set $\mathbb{S}_{b,n}^{(\alpha)} \times \mathbb{R}$ with Lipschitz constant $\lambda >0$. If the boundary conditions \eqref{eq:2} and \eqref{eq:3} and Assumption 1 hold, then the SGIPSM discretizes the integral nonlinear Lane-Emden equation \eqref{eq:50} at the SFGGR nodes $x_{b,n,i}^{(\alpha)} \in  \mathbb{S}_{b,n}^{(\alpha)},\; i=0,\ldots,n$, such that the residual function values $\mathcal{R}\left(x_{b,n,i}^{(\alpha)}\right),\; i=0,\ldots,n$ are bounded by the inequalities    
\begin{equation}
\begin{split}\label{eq:75}
\left|\mathcal{R}\left(x_{b,n,i}^{(\alpha)}\right)\right| &\leq \frac{{{2^{ - 2n - 1}}{{\left\| {q_{b,n}^{(\alpha )}} \right\|}_{\infty ,(0,b]}}\Gamma \left( {\alpha  + 1} \right)\,{b^{n + 1}}\,\Gamma \left( {n + 2\alpha  + 1} \right)}}{{(n + 1)!\,\Gamma \left( {2\alpha  + 1} \right)\,\,\Gamma \left( {n + \alpha  + 1} \right)}} \times \\
& \quad \left( {{A_1}\left( {b\,\lambda \left( {x_{b,n,i}^{(\alpha )} + \left| {\frac{\gamma }{\beta }} \right| + b} \right) + \left| {{\alpha _2}} \right|} \right) + {A_0}\,\lambda \,\left( {n + 1} \right)\left( {x_{b,n,i}^{(\alpha )} + b} \right)} \right),\quad i=0,\ldots,n.
\end{split}
\end{equation}
Moreover, when $n \rightarrow \infty$, the asymptotic residual function values are bounded by
\begin{equation}
\left| {\mathcal{R}\left( {x_{b,n,i}^{(\alpha )}} \right)} \right| \le {\cal L}_{1,\lambda,b}^{(\alpha )}{\left( {\frac{e}{4}} \right)^n}{b^{n + 1}}\left\{ \begin{array}{l}
{n^{ - n - \frac{3}{2} + \alpha }},\quad {A_0} = 0,\\
{n^{ - n - \frac{1}{2} + \alpha }},\quad {A_0} \ne 0,
\end{array} \right.\quad i = 0, \ldots ,n,
\label{eq:76}
\end{equation}
for $\alpha \geq 0$, and
\begin{equation}
\left| {\mathcal{R}\left( {x_{b,n,i}^{(\alpha )}} \right)} \right| \le {\cal L}_{2,\lambda,b}^{(\alpha )}{\left( {\frac{e}{4}} \right)^n}{b^{n + 1}}\left\{ \begin{array}{l}
{n^{ - n - \frac{3}{2}}},\quad {A_0} = 0,\\
{n^{ - n - \frac{1}{2}}},\quad {A_0} \ne 0,
\end{array} \right.\quad i = 0, \ldots ,n,
\label{eq:77}
\end{equation}
for $\frac{-1}{2}<\alpha < 0$, where $\mathcal{L}_{1,\lambda,b}^{(\alpha)}$ and $\mathcal{L}_{2,\lambda,b}^{(\alpha)}$ are positive constants dependent on $\lambda,b,\alpha$, and independent of $n$.
\end{thm}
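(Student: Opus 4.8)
The plan is to express the nodal residual as the action of the Lane--Emden operator on $y_{\text{app},n}$ minus its action on the exact solution $y$ (which vanishes identically at the SFGGR nodes), and then to recycle the quadrature bounds of Theorems~\ref{th:7}--\ref{th:8} together with the solution-error decomposition behind Theorem~\ref{th:9}.

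First I would subtract the exact identity $y''+\frac{\alpha_2}{x}y'+f(x,y)=0$, evaluated at each node $x_{b,n,i}^{(\alpha)}$, from the residual definition~\eqref{eq:70}. This leaves three brackets: a second-derivative difference, a first-derivative difference scaled by $\alpha_2/x_{b,n,i}^{(\alpha)}$, and a difference of $f$-values. Using the successive-integration construction~\eqref{eq:47}--\eqref{eq:49} and the nodal identities~\eqref{eq:71}--\eqref{eq:73}, each bracket collapses to a single quadrature truncation error of $y''$: under the analysis convention inherited from Theorem~\ref{th:9}---namely that the computed nodal values coincide with $y''\left(x_{b,n,i}^{(\alpha)}\right)$---the second-derivative bracket vanishes, the first-derivative bracket equals $-\,{}_{y''}E_{b,n,i}^{(1),\alpha}$, and the $f$-bracket is controlled through the Lipschitz hypothesis by $\lambda$ times the nodal solution error. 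Bounding the latter by the Theorem~\ref{th:9} combination $\left|\frac{\gamma}{\beta}\right|\left|{}_{y''}E_{b,n,0}^{(1),\alpha}\right|+\left|{}_{y''}E_{b,n,0}^{(2),\alpha}\right|+\left|{}_{y''}E_{b,n,i}^{(2),\alpha}\right|$ yields the master inequality
\begin{equation*}
\left|\mathcal{R}\left(x_{b,n,i}^{(\alpha)}\right)\right|\le\frac{|\alpha_2|}{x_{b,n,i}^{(\alpha)}}\left|{}_{y''}E_{b,n,i}^{(1),\alpha}\right|+\lambda\left(\left|\frac{\gamma}{\beta}\right|\left|{}_{y''}E_{b,n,0}^{(1),\alpha}\right|+\left|{}_{y''}E_{b,n,0}^{(2),\alpha}\right|+\left|{}_{y''}E_{b,n,i}^{(2),\alpha}\right|\right).
\end{equation*}

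Next I would substitute the explicit bounds~\eqref{eq:64} and~\eqref{eq:67} with $f=y''$, so that the constants become $A_0=\left\|y^{(n+2)}\right\|_{\infty,(0,b]}$ and $A_1=\left\|y^{(n+3)}\right\|_{\infty,(0,b]}$, and use the endpoint value $x_{b,n,0}^{(\alpha)}=b$ (node $0$ is the right endpoint where the boundary condition is imposed). The factor $1/x_{b,n,i}^{(\alpha)}$ in the $\alpha_2$-term cancels the $x_{b,n,i}^{(\alpha)}$ appearing in~\eqref{eq:64}, isolating the $|\alpha_2|$ contribution, while the Lipschitz-scaled remainder reproduces the Theorem~\ref{th:9} bracket times $\lambda$. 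Collecting the $A_0$- and $A_1$-coefficients then gives~\eqref{eq:75}. For the asymptotic estimates~\eqref{eq:76}--\eqref{eq:77}, I would insert the large-$n$ forms of $\left\|q_{b,n}^{(\alpha)}\right\|_{\infty,(0,b]}$ and the Gamma-ratios from Theorems~\ref{th:7}--\ref{th:8} and Lemma~\ref{lem:5}, treating $A_0=0$ and $A_0\ne0$ separately; the extra factor $(n+1)$ borne by the $A_0$-term is exactly what lifts the power of $n$ from $n^{-n-3/2}$ to $n^{-n-1/2}$.

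The step I expect to be the main obstacle is the bracket-by-bracket identification: certifying that the discrepancies in $y'_{\text{app},n}$ and $y_{\text{app},n}$ reduce to precisely the stated $1$- and $2$-fold Gegenbauer quadrature errors of $y''$, with no residual leftover terms. This rests on the convention---already operative in Theorem~\ref{th:9}---that the computed nodal second-derivative values are identified with $y''\left(x_{b,n,i}^{(\alpha)}\right)$, so that every discrepancy is quadrature-induced, and it demands careful tracking of which error (order $1$ versus $2$, node $0$ versus node $i$) feeds each bracket. Once this accounting is fixed and $x_{b,n,0}^{(\alpha)}=b$ is inserted, the remainder is a direct substitution of already-proven bounds followed by elementary collection of terms.
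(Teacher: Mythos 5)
Your proposal is correct and follows essentially the same route as the paper: subtract the exact equation from the residual definition so that the second-derivative bracket vanishes, bound the first-derivative bracket by $\frac{|\alpha_2|}{x_{b,n,i}^{(\alpha)}}\bigl|{}_{y''}E_{b,n,i}^{(1),\alpha}\bigr|$, control the $f$-bracket via the Lipschitz condition and the Theorem \ref{th:9} decomposition of the nodal solution error, and then substitute the explicit bounds of Theorems \ref{th:7} and \ref{th:8} with $f=y''$ (your master inequality is exactly the paper's intermediate estimate, and your collection of the $A_0$- and $A_1$-terms, using $x_{b,n,0}^{(\alpha)}=b$, reproduces \eqref{eq:75}). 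The paper's own proof is just a terser statement of the same argument.
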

\begin{proof}
Subtracting Eq. \eqref{eq:1} from Eq. \eqref{eq:70}, and using Lipschitz condition together with Eqs. \eqref{eq:3}, \eqref{eq:49}, \eqref{eq:51}, Theorems \ref{th:7} and \ref{th:8} yield
\begin{align}
	\left|\mathcal{R}\left(x_{b,n,i}^{(\alpha)}\right)\right| &\leq \frac{{|{\alpha _2}|}}{{x_{b,n,i}^{(\alpha )}}}\left| {_{y''}E_{b,n,i}^{(1),\alpha }} \right| + \lambda \left| {y_{b,n,i}^{(\alpha )} - {y_{{\text{app}},n}}\left( {x_{b,n,i}^{(\alpha )}} \right)} \right|\quad \forall i\label{eq:78new}\\
	&\le \frac{{|{\alpha _2}|}}{{x_{b,n,i}^{(\alpha )}}}\left| {_{y''}E_{b,n,i}^{(1),\alpha }} \right| + \lambda \left( {\left| {{{\frac{\gamma }{\beta }}_{y''}}E_{b,n,0}^{(1),\alpha }} \right| + \left| {_{y''}E_{b,n,0}^{(2),\alpha }} \right| + \left| {_{y''}E_{b,n,i}^{(2),\alpha }} \right|} \right)\quad \forall i,\label{eq:78}
	\end{align}
from which the proof is established.
	\end{proof}
\begin{thm}\label{th:9newk1}
Assume that $\left\|y^{(n+3)}\right\|_{\infty,(0,b]} < \infty$, and suppose also that the function $f(x,y)$ satisfies Lipschitz condition in the variable $y$ on the set $\mathbb{S}_{b,n}^{(\alpha)} \times \mathbb{R}$ with Lipschitz constant $\lambda >0$. If the boundary conditions \eqref{eq:2} and \eqref{eq:3} and Assumption 2 hold, then there exists a relatively small positive number $0 < \varepsilon \ll 1$ and a positive integer $N$ such that the SGIPSM discretizes the integral nonlinear Lane-Emden equation \eqref{eq:50} at the SFGGR nodes $x_{b,n,i}^{(\alpha)} \in  \mathbb{S}_{b,n}^{(\alpha)},\; i=0,\ldots,n$ with asymptotic residual function values $\mathcal{R}\left(x_{b,n,i}^{(\alpha)}\right),\; i=0,\ldots,n$ bounded by the inequalities    
\begin{equation}
\left| {{\cal R}\left( {x_{b,n,i}^{(\alpha )}} \right)} \right| \le \left\{ \begin{array}{l}
\varepsilon {\kern 1pt} \lambda  + {B^{(\alpha )}}{\kern 1pt} {b^{n + 1}}{\left( {\frac{e}{4}} \right)^n}{n^{ - n - \frac{3}{2} + \alpha }},\quad i = 0, \ldots ,n\, \wedge \,\alpha  \ge 0,\\
\varepsilon {\kern 1pt} \lambda  + {C^{(\alpha )}}{2^{ - 2n - 1}}{b^{n + 1}}{e^n}{n^{ - n - \frac{3}{2}}},\quad i = 0, \ldots ,n\, \wedge \, - 1/2 < \alpha  < 0,
\end{array} \right.
\end{equation}
for all $n \ge N$, where ${B^{(\alpha )}}$ and ${C^{(\alpha )}}$ are positive constants dependent on $\alpha$, and independent of $n$.
\end{thm}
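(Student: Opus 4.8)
The plan is to follow the skeleton of the proof of Theorem \ref{th:9new} almost verbatim, and then diverge at the single place where Assumption~2 genuinely differs from Assumption~1: under Assumption~1 the nodal value $y(0)$ is available in the closed form \eqref{eq:51}, so its error collapses into a combination of quadrature errors and inherits the clean exponential rate, whereas under Assumption~2 the quantity $y(0)$ is recovered only \emph{implicitly} as part of the augmented system \eqref{eq:9new}. The idea is therefore to carry the first-order quadrature error explicitly (it alone produces the stated exponential term) and to absorb the entire solution error into the soft bound $\varepsilon$ supplied by the uniform-convergence result of Corollary \ref{cor:101}.

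First I would subtract the exact equation \eqref{eq:1}, evaluated at a node $x_{b,n,i}^{(\alpha)}$, from the residual definition \eqref{eq:70}, producing a three-term decomposition of $\mathcal{R}\left(x_{b,n,i}^{(\alpha)}\right)$ into the second-derivative discrepancy $\left[y''_{\text{app},n}-y''\right]\left(x_{b,n,i}^{(\alpha)}\right)$, the weighted first-derivative discrepancy $\frac{\alpha_2}{x_{b,n,i}^{(\alpha)}}\left[y'_{\text{app},n}-y'\right]\left(x_{b,n,i}^{(\alpha)}\right)$, and the nonlinear discrepancy $f\left(x_{b,n,i}^{(\alpha)},y_{\text{app},n}\right)-f\left(x_{b,n,i}^{(\alpha)},y\right)$. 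Exactly as in Theorem \ref{th:9new}, the first bracket vanishes since collocation forces $y''_{\text{app},n}\left(x_{b,n,i}^{(\alpha)}\right)=\Phi_i=y''\left(x_{b,n,i}^{(\alpha)}\right)$; by Eq.~\eqref{eq:48} and Theorem \ref{th:10} the second bracket is precisely the first-order quadrature error ${}_{y''}E_{b,n,i}^{(1),\alpha}$; and the Lipschitz hypothesis controls the third by $\lambda\,\bigl|y_{\text{app},n}\left(x_{b,n,i}^{(\alpha)}\right)-y\left(x_{b,n,i}^{(\alpha)}\right)\bigr|$. This gives the working inequality
\[
\left|\mathcal{R}\left(x_{b,n,i}^{(\alpha)}\right)\right| \le \frac{|\alpha_2|}{x_{b,n,i}^{(\alpha)}}\left|{}_{y''}E_{b,n,i}^{(1),\alpha}\right| + \lambda\,\left|y_{\text{app},n}\left(x_{b,n,i}^{(\alpha)}\right)-y\left(x_{b,n,i}^{(\alpha)}\right)\right|,
\]
the exact analogue of \eqref{eq:78new}.

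Next I would bound the two surviving terms separately. For the first, the asymptotic estimate \eqref{eq:inequalityb1} of Theorem \ref{th:7} (equivalently \eqref{eq:ineqasdev1} of Theorem \ref{th:10}) shows that $\frac{|\alpha_2|}{x_{b,n,i}^{(\alpha)}}\bigl|{}_{y''}E_{b,n,i}^{(1),\alpha}\bigr|$ decays like $b^{n+1}(e/4)^{n}n^{-n-3/2+\alpha}$ for $\alpha\ge 0$, while \eqref{eq:inequalityb2} furnishes the companion $2^{-2n-1}e^{n}n^{-n-3/2}b^{n+1}$ estimate for $-1/2<\alpha<0$; the constant factor $|\alpha_2|$ is absorbed into the $\alpha$-dependent constants $B^{(\alpha)}$ and $C^{(\alpha)}$, respectively. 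For the second term I would invoke Corollary \ref{cor:101}, which guarantees that $\|y-y_{\text{app},n}\|_{\infty,(0,b]}\to 0$; hence, for any prescribed $0<\varepsilon\ll 1$ there is a positive integer $N$ with $\|y-y_{\text{app},n}\|_{\infty,(0,b]}<\varepsilon$ for all $n\ge N$, so that $\lambda\,\bigl|y_{\text{app},n}\left(x_{b,n,i}^{(\alpha)}\right)-y\left(x_{b,n,i}^{(\alpha)}\right)\bigr|<\varepsilon\,\lambda$. Adding the two estimates yields precisely the two asserted bounds.

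I expect the only genuinely delicate point to be the second term. Under Assumption~1 the error in $y(0)$ reduces to quadrature errors through \eqref{eq:51} and therefore retains a sharp exponential rate, but under Assumption~2 the nodal value $y(0)$ emerges only from solving the coupled system \eqref{eq:9new}, so no explicit sharp rate for $\bigl|y(0)_{\text{app}}-y(0)\bigr|$ is at hand. Replacing this contribution by the qualitative uniform bound $\varepsilon\lambda$ is exactly what forces the non-vanishing $\varepsilon\lambda$ summand in the statement, and the crux of the argument is the recognition that Corollary \ref{cor:101} supplies precisely the convergence needed to render this contribution arbitrarily small for large $n$, while the explicit first-order quadrature term alone carries the exponential decay.
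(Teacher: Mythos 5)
Your proposal is correct and follows essentially the same route as the paper: the paper's proof likewise invokes the decomposition inequality \eqref{eq:78new} from Theorem \ref{th:9new}, bounds the Lipschitz term by $\varepsilon\lambda$ via the uniform convergence of Corollary \ref{cor:101} for $n \ge N$, and lets the first-order quadrature error (Theorem \ref{th:7}) supply the exponential summand. Your added commentary on why Assumption~2 forces the soft $\varepsilon\lambda$ term, in contrast to the sharp rate available under Assumption~1, is accurate and consistent with the paper's reasoning.
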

\begin{proof}
By the uniform convergence of Corollary \ref{cor:101}, we find that
\[\left| {y_{b,n,i}^{(\alpha )} - {y_{{\text{app}},n}}\left( {x_{b,n,i}^{(\alpha )}} \right)} \right| \le \varepsilon\quad \forall n \ge N.\]
The proof is established by combining this result with inequality \eqref{eq:78new}.
\end{proof}
	Similarly, to furnish the exponential convergence rate of the SGIPSM for Problem 2, notice that the residual term in this case takes the following special form:
	\begin{equation}\label{eq:79}
	\mathcal{R}(x) = L(y_{\text{app},n}(x)) - g(x)\quad \forall x \in (0,b],
	\end{equation}
	where the differential operator $L = \displaystyle{\frac{d^{2}}{dx^{2}} + \frac{\alpha_{2}}{x} \frac{d}{dx}} + p(x)$. The following two corollaries are direct results of Theorems \ref{th:9} and \ref{th:9newk1}.
\begin{cor}\label{cor:3}
Assume that $\left\| y^{(n+k+2)}\right\|_{\infty,(0,b]}=A_{k} \in \mathbb{R}^{+},\; k=0,1$, and $\left\|p\right\|_{\infty,(0,b]} = M \in \mathbb{R}_0^+$. If the boundary conditions \eqref{eq:2} and \eqref{eq:3} and Assumption 1 hold, then the SGIPSM discretizes the integral nonlinear Lane-Emden equation \eqref{eq:50} at the SFGGR nodes $x_{b,n,i}^{(\alpha)} \in \mathbb{S}_{b,n}^{(\alpha)},\; i=0,\ldots,n$, such that the residual function values $\mathcal{R}\left(x_{b,n,i}^{(\alpha)}\right),\; i=0,\ldots,n$ are bounded by the inequalities 
\begin{equation}\label{eq:80}
\begin{split}
\left|\mathcal{R}\left(x_{b,n,i}^{(\alpha)}\right)\right| &\leq \frac{{{2^{ - 2n - 1}}{{\left\| {q_{b,n}^{(\alpha )}} \right\|}_{\infty ,(0,b]}}\Gamma \left( {\alpha  + 1} \right)\,{b^{n + 1}}\,\Gamma \left( {n + 2\alpha  + 1} \right)}}{{(n + 1)!\,\Gamma \left( {2\alpha  + 1} \right)\,\,\Gamma \left( {n + \alpha  + 1} \right)}} \times \\
& \quad \left( {{A_1}\left( {b\,M \left( {x_{b,n,i}^{(\alpha )} + \left| {\frac{\gamma }{\beta }} \right| + b} \right) + \left| {{\alpha _2}} \right|} \right) + {A_0}\,M \,\left( {n + 1} \right)\left( {x_{b,n,i}^{(\alpha )} + b} \right)} \right),\quad i=0,\ldots,n.
\end{split}
\end{equation}
Moreover, when $n \rightarrow \infty$, the asymptotic residual function values are bounded by
\begin{equation}\label{eq:81}
\left| {\mathcal{R}\left( {x_{b,n,i}^{(\alpha )}} \right)} \right| \le {\cal L}_{1,M,b}^{(\alpha )}{\left( {\frac{e}{4}} \right)^n}{b^{n + 1}}\left\{ \begin{array}{l}
{n^{ - n - \frac{3}{2} + \alpha }},\quad {A_0} = 0,\\
{n^{ - n - \frac{1}{2} + \alpha }},\quad {A_0} \ne 0,
\end{array} \right.\quad i = 0, \ldots ,n,
\end{equation}
for $\alpha \geq 0$, and
\begin{equation}
\left| {\mathcal{R}\left( {x_{b,n,i}^{(\alpha )}} \right)} \right| \le {\cal L}_{2,M,b}^{(\alpha )}{\left( {\frac{e}{4}} \right)^n}{b^{n + 1}}\left\{ \begin{array}{l}
{n^{ - n - \frac{3}{2}}},\quad {A_0} = 0,\\
{n^{ - n - \frac{1}{2}}},\quad {A_0} \ne 0,
\end{array} \right.\quad i = 0, \ldots ,n,
\label{eq:82}
\end{equation}
for $\frac{-1}{2}<\alpha < 0$, where $\mathcal{L}_{1,M,b}^{(\alpha)}$ and $\mathcal{L}_{2,M,b}^{(\alpha)}$ are positive constants dependent on $M,b,\alpha$, and independent of $n$.
\end{cor}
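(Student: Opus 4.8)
The plan is to obtain Corollary \ref{cor:3} as an immediate specialization of Theorem \ref{th:9new} to the linear setting of Problem 2. The governing observation is that Problem 2 is precisely the instance of Problem 1 in which the nonlinearity is the affine map $f(x,y) = p(x)\,y - g(x)$; consequently the integral reformulation \eqref{eq:50}, the SFGGR collocation, and the residual \eqref{eq:70} all carry over unchanged, the residual now admitting the operator form \eqref{eq:79}.

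The first step is to check that this affine $f$ meets the Lipschitz hypothesis of Theorem \ref{th:9new}. For every $x \in \mathbb{S}_{b,n}^{(\alpha)}$ and all $y_1, y_2 \in \mathbb{R}$,
\[
\left| {f(x,y_1) - f(x,y_2)} \right| = \left| {p(x)} \right|\left| {y_1 - y_2} \right| \le M\left| {y_1 - y_2} \right|,
\]
so $f$ is Lipschitz in $y$ on $\mathbb{S}_{b,n}^{(\alpha)} \times \mathbb{R}$ with constant $M = \|p\|_{\infty,(0,b]}$. The remaining hypotheses of Theorem \ref{th:9new} --- the derivative bounds $\|y^{(n+k+2)}\|_{\infty,(0,b]} = A_k$, the boundary conditions \eqref{eq:2} and \eqref{eq:3}, and Assumption 1 --- are assumed to hold. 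I would therefore invoke Theorem \ref{th:9new} directly with $\lambda$ replaced by $M$: substituting $\lambda = M$ into its finite-$n$ estimate reproduces inequality \eqref{eq:80} term for term, and substituting into its two asymptotic estimates reproduces \eqref{eq:81} and \eqref{eq:82}, with the constants simply relabeled as $\mathcal{L}_{1,M,b}^{(\alpha)}$ and $\mathcal{L}_{2,M,b}^{(\alpha)}$.

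The one place demanding care is the degenerate case $M = 0$, admissible here because the corollary allows $M \in \mathbb{R}_0^+$ whereas Theorem \ref{th:9new} presumes a strictly positive Lipschitz constant $\lambda > 0$. When $p \equiv 0$ the map $f(x,y) = -g(x)$ does not depend on $y$, so the Lipschitz inequality holds for every $\lambda > 0$; passing to the limit $\lambda \to 0^+$ in the bound of Theorem \ref{th:9new} (equivalently, discarding the $\lambda$-weighted error terms in inequality \eqref{eq:78new}) leaves only the contribution of the singular term $\frac{\alpha_2}{x}\,y'$. I would finish by noting that in this surviving term the factor $x_{b,n,i}^{(\alpha)}$ appearing in the numerator of the bound of Theorem \ref{th:7} for ${}_{y''}E_{b,n,i}^{(1),\alpha}$ cancels the $1/x_{b,n,i}^{(\alpha)}$ coming from $\alpha_2/x$, producing exactly the $A_1|\alpha_2|$ term obtained by setting $M = 0$ in \eqref{eq:80}. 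I expect this boundary case to be the only genuine obstacle; the rest is a transparent relabeling of Theorem \ref{th:9new} requiring no fresh estimation.
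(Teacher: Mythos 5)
Your proposal is correct and matches the paper's intent: the paper states Corollary \ref{cor:3} as a direct consequence of Theorem \ref{th:9new}, obtained exactly as you do by taking $f(x,y)=p(x)\,y-g(x)$, which is Lipschitz in $y$ with constant $M=\left\|p\right\|_{\infty,(0,b]}$, and substituting $\lambda = M$ throughout. Your explicit treatment of the degenerate case $M=0$ (where Theorem \ref{th:9new} formally requires $\lambda>0$) is a small but genuine point of care that the paper passes over in silence, and your resolution --- dropping the $\lambda$-weighted terms in inequality \eqref{eq:78new} so that only the $|\alpha_2|\,A_1$ contribution from the singular first-derivative term survives --- is sound.
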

\begin{cor}\label{cor:4}
Assume that  $\left\|p\right\|_{\infty,(0,b]} = M \in \mathbb{R}_0^+$. If the boundary conditions \eqref{eq:2} and \eqref{eq:3} and Assumption 2 hold, then there exists a relatively small positive number $0 < \varepsilon \ll 1$ and a positive integer $N$ such that the SGIPSM discretizes the integral nonlinear Lane-Emden equation \eqref{eq:50} at the SFGGR nodes $x_{b,n,i}^{(\alpha)} \in  \mathbb{S}_{b,n}^{(\alpha)},\; i=0,\ldots,n$ with asymptotic residual function values $\mathcal{R}\left(x_{b,n,i}^{(\alpha)}\right),\; i=0,\ldots,n$ bounded by the inequalities    
\begin{equation}
\left| {{\cal R}\left( {x_{b,n,i}^{(\alpha )}} \right)} \right| \le \left\{ \begin{array}{l}
\varepsilon {\kern 1pt} M + {B^{(\alpha )}}{\kern 1pt} {b^{n + 1}}{\left( {\frac{e}{4}} \right)^n}{n^{ - n - \frac{3}{2} + \alpha }},\quad i = 0, \ldots ,n\, \wedge \,\alpha  \ge 0,\\
\varepsilon {\kern 1pt} M  + {C^{(\alpha )}}{2^{ - 2n - 1}}{b^{n + 1}}{e^n}{n^{ - n - \frac{3}{2}}},\quad i = 0, \ldots ,n\, \wedge \, - 1/2 < \alpha  < 0,
\end{array} \right.
\end{equation}
for all $n \ge N$, where ${B^{(\alpha )}}$ and ${C^{(\alpha )}}$ are positive constants dependent on $\alpha$, and independent of $n$.
\end{cor}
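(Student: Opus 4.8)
The plan is to recognize that Corollary~\ref{cor:4} is merely the specialization of Theorem~\ref{th:9newk1} to the linear setting of Problem~2, so that essentially no new analysis is required beyond identifying the correct Lipschitz constant. First I would observe that Problem~2, Eq.~\eqref{eq:4}, fits the general nonlinear template~\eqref{eq:1} under the particular choice $f(x,y) = p(x)\,y - g(x)$. With this identification the general residual~\eqref{eq:70} collapses exactly into the linear residual~\eqref{eq:79}, since $y''_{\text{app},n}(x) + (\alpha_2/x)\,y'_{\text{app},n}(x) + p(x)\,y_{\text{app},n}(x) - g(x) = L\bigl(y_{\text{app},n}(x)\bigr) - g(x)$.

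The key step is to verify the Lipschitz hypothesis of Theorem~\ref{th:9newk1} and to pin down its constant. For any $x \in (0,b]$ and $y_1,y_2 \in \mathbb{R}$, the linearity of $f$ in its second argument gives $|f(x,y_1) - f(x,y_2)| = |p(x)|\,|y_1 - y_2| \le \|p\|_{\infty,(0,b]}\,|y_1 - y_2| = M\,|y_1 - y_2|$. Hence $f$ satisfies the required Lipschitz condition on $\mathbb{S}_{b,n}^{(\alpha)} \times \mathbb{R}$ with Lipschitz constant $\lambda = M$. I would note in passing that the degenerate case $M = 0$, i.e.\ $p \equiv 0$, is harmless: it simply nullifies the Lipschitz term, and the stated bound continues to hold.

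With this in hand, all remaining hypotheses of Theorem~\ref{th:9newk1} are met: the smoothness requirement $\|y^{(n+3)}\|_{\infty,(0,b]} < \infty$ is assumed, the boundary conditions~\eqref{eq:2} and~\eqref{eq:3} together with Assumption~2 hold, and the SGIPSM discretization of~\eqref{eq:50} at the SFGGR nodes is identical in both settings. I would therefore invoke Theorem~\ref{th:9newk1} directly and substitute $\lambda = M$ into its conclusion. This yields at once the existence of a relatively small $0 < \varepsilon \ll 1$ and a positive integer $N$ such that, for all $n \ge N$, the asymptotic residual values obey exactly the two displayed inequalities, with the positive constants $B^{(\alpha)}$ and $C^{(\alpha)}$ inherited unchanged from Theorem~\ref{th:9newk1}.

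There is essentially no hard part here, since the result is a direct corollary; the only point requiring genuine care is the correct reduction of the nonlinear framework to the linear one and the attendant replacement of the abstract Lipschitz constant $\lambda$ by the concrete bound $M = \|p\|_{\infty,(0,b]}$. The one subtlety worth flagging is the implicit demand that the exact solution $y$ of Problem~2 inherit enough regularity for $\|y^{(n+3)}\|_{\infty,(0,b]}$ to remain finite, which is exactly what triggers the uniform-convergence estimate underlying Theorem~\ref{th:9newk1}.
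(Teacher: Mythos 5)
Your proposal is correct and follows essentially the same route as the paper, which simply states that the proof is similar to that of Theorem \ref{th:9newk1}: you make the reduction explicit by taking $f(x,y)=p(x)\,y-g(x)$, verifying the Lipschitz condition with constant $\lambda=M=\left\|p\right\|_{\infty,(0,b]}$, and invoking the uniform convergence of Corollary \ref{cor:101} through that theorem. Your added remarks on the degenerate case $M=0$ and on the implicit regularity assumption $\left\|y^{(n+3)}\right\|_{\infty,(0,b]}<\infty$ are apt and, if anything, slightly more careful than the paper's one-line proof.
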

\begin{proof}
The proof is similar to that of Theorem \ref{th:9newk1}.
\end{proof}
\section{Numerical experiments}
\label{sec:NE1}
In this section, we apply the proposed SGIPSM on five test examples of Problems 1 and 2. All numerical experiments were carried out using MATLAB R2011b (7.13.0.564) software installed on a personal laptop equipped with an Intel(R) Core(TM) i3-4500U CPU with 1.70 GHz speed running on Windows 7 Ultimate 32-bit operating system. The exact solutions were calculated using MATHEMATICA 7 with $18$ digits of precision maintained in internal computations. In all numerical tests, by AE and RE, we mean the absolute and relative errors, respectively. MAE and AE$_b$ denote the maximum AE and the AE in the computation of the solution function $y$ at $x = b$, respectively. Furthermore, by $\kappa_{\infty}$, we mean the infinity norm condition number of the resulting linear algebraic system associated with Problem 2. The resulting linear algebraic systems of equations were solved using MATLAB ``mldivide'' algorithm. Nonlinear algebraic systems associated with Problem 1 were solved by MATLAB ``fsolve'' solver with termination tolerance ``TolFun'' set at $10^{-15}$.\\
 
\noindent \textbf{Example 1.} Consider Problem 2 with $\alpha_{1}=0, \alpha_{2}=1, \beta=1, \gamma=0, \delta=0, b=1, p(x)=0$, and $g(x)=\left(8/\left(8-x^2\right)\right)^{2} $. The exact solution is $y(x)=2\log\left(7/\left(8-x^2\right)\right)$. Figure \ref{fig1} shows the plots of the exact solution, approximate solution, AE, and MAE on $[0,1]$; in addition, the figure shows $\kappa_{\infty}$ obtained by the SGIPSM for several values of $n$ and $\alpha$. 

Example 1 was previously solved by \cite{caglar2006b} and later by \cite{yuzbacsi2013improved} using a B-spline method and an improved Bessel's method, respectively. 
Table \ref{table1.2} shows the power of the state-of-the-art SGIPSM when compared with the B-spline method of \cite{caglar2006b}, as illustrated by gaining extra digits of precision using far lesser number of collocation points. The SGIPSM outperforms also the method of \cite{yuzbacsi2013improved} as clearly observed from Table \ref{table1.1}, where higher-order approximations are achieved using the same number of collocation points. Moreover, the SGIPSM performs a single discretization to the integral form of the problem against two discretizations for the method of \cite{yuzbacsi2013improved} applied to the problem and its associated error problem; thus, significantly reducing the necessary computational cost to establish the same level of accuracy.  In addition, thanks to the discretization process employing the SFGGR collocation points, the boundary condition, $y(1) = 0$, is satisfied exactly by the SGIPSM.  
\begin{figure}[H]
\centering
\includegraphics[scale=0.6]{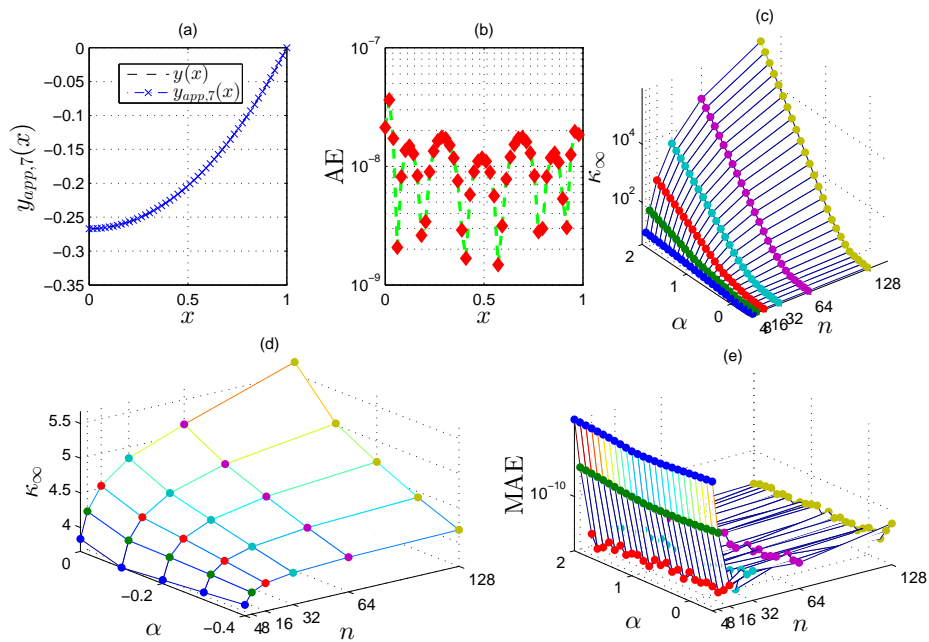}
\caption{The numerical simulation of Example $1$ using the SGIPSM. Figure (a) shows the exact and approximate solution on $[0,1]$ obtained using $n = 7$ and $\alpha=1.1$. Figure (b) shows the corresponding AE in log-lin scale. Both figures were generated using 50 linearly spaced nodes from 0 to 1. Figures (c) and (e) show $\kappa_{\infty}$ and the MAE for $n = 2^{k}, k = 2(1)7$ and $\alpha = -0.4(0.1)2$ with linear and logarithmic scales on the $z$ axis, respectively. Figure (d) shows further $\kappa_{\infty}$ for $\alpha = -0.4(0.1)0$ with logarithmic scale on the $z$ axis.}
\label{fig1}
\end{figure}
\begin{table}[!ht]
\centering
\caption{The REs of the B-spline method \cite{caglar2006b} and the SGIPSM.} 
\begin{tabular}{c c c}
\toprule
$x$ & \textbf{B-spline method} \cite{caglar2006b}  & \textbf{SGIPSM}\\
& $n=20$ & $n=5, \alpha=0.1$ \\
\midrule
  0    & 9.8160e-005  & 1.8405e-006 \\      
 0.05  & 9.8756e-005  & 1.8512e-006 \\
 0.1   & 1.0122e-004  & 3.2213e-006 \\
 0.2   & 1.0231e-004  & 9.0436e-006 \\      
 0.3   & 1.0119e-004  & 3.6570e-006 \\    
 0.4   & 1.0425e-004  & 7.1760e-006 \\
 0.5   & 1.0616e-004  & 1.3045e-005 \\
 0.6   & 1.0911e-004  & 6.5036e-006 \\
 0.7   & 1.0925e-004  & 1.1139e-005 \\      
 0.8   & 1.1398e-004  & 2.5024e-005 \\    
 0.9   & 1.1117e-004  & 2.4411e-006 \\
\midrule
AE$_1$ &  0  &  0\\
\bottomrule
\end{tabular}
\label{table1.2}   
 \end{table}
\begin{table}[!ht]
\centering
\caption{The REs of Bessel's method \cite{yuzbacsi2013improved} and the SGIPSM using eight collocation points.} 
\begin{tabular}{c c c}
\toprule
$x$ & \textbf{Bessel's method} \cite{yuzbacsi2013improved}  & \textbf{SGIPSM}\\
& & $\alpha=1.1$\\
\midrule
  0  & 5.9957e-006   & 7.9499e-008\\      
 0.2 & 6.0946e-006   & 8.6474e-009\\    
 0.4 & 6.8724e-006   & 5.4461e-010\\
 0.6 & 8.8719e-006   & 2.8767e-008\\
 0.8 & 1.5473e-005   & 4.0944e-008\\
\midrule
AE$_1$ &  1.6198e-016  &      0\\
\bottomrule
\end{tabular}
\label{table1.1}   
\end{table}
\noindent \textbf{Example 2.} Consider Problem 1 with $\alpha_{1}=0, \alpha_{2}=2, \beta=1, \gamma=0, \delta=\sqrt{3}/2, b=1$, and $f(x,y)=y^{5}$. The exact solution is $y(x)=1/\sqrt{1+x^{2}/{3}}$. Figure \ref{fig2} shows the plots of the exact solution, approximate solution, AE, and MAE on $[0,1]$ for several values of $n$ and $\alpha$. 
\cite{kanth2007cubic} solved the problem using a cubic spline method combined with a quesilinearization technique that reduces the nonlinear problem into a sequence of linear problems. \cite{turkyilmazoglu2013effective} later approached the solution of the problem using a continuous functions series expansion technique in which the problem is discretized into a system of nonlinear algebraic equations by expanding the solution 
in a truncated Maclaurin series. The expansion coefficients are then calculated through a Galerkin-like method. Table \ref{table2.1} shows the rapid convergence rates and the memory minimizing feature of the SGIPSM against the cubic spline method of \cite{kanth2007cubic}, where the former method produces higher accuracy using relatively much smaller number of collocation points. On the other hand, the latter method requires the solution of a nonlinear system of equations of order $53$, versus only $8$ for the SGIPSM to finally yield approximations of lower accuracy. The SGIPSM gives also greater accuracy than that of \cite{turkyilmazoglu2013effective} for several values of $n$ as clearly observed from Table \ref{table2.2}. The small approximation error at $x = 1$ is due to the inexactness in the computer representation of the real number $\sqrt{3}/2$.


\begin{figure}[H]
\centering
\includegraphics[width=11cm, height=11cm]{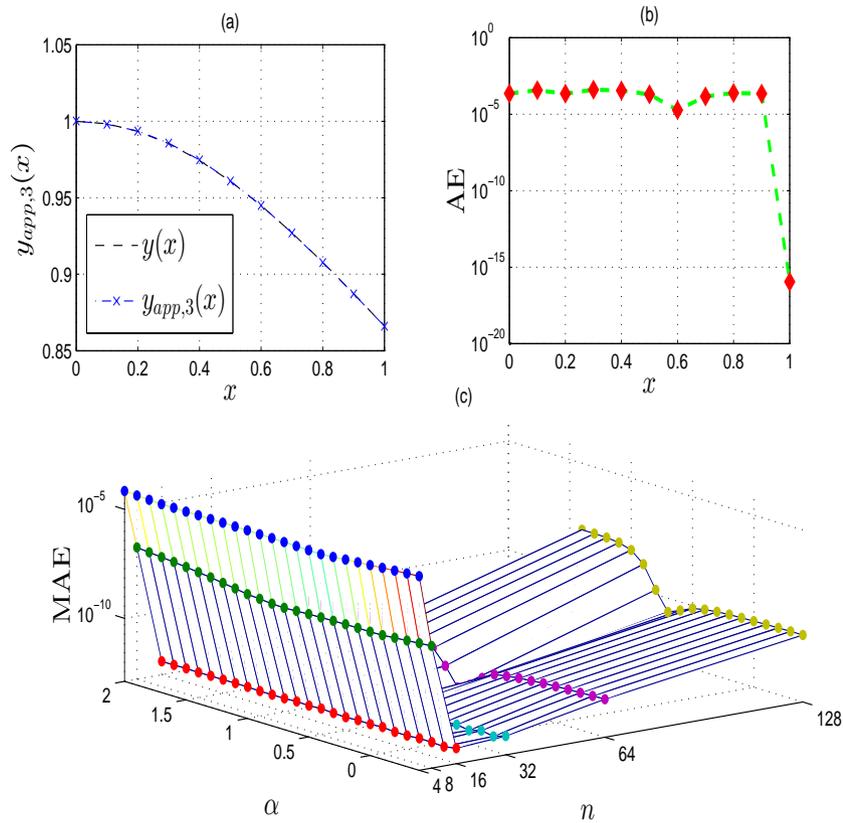}
\caption{The numerical simulation of Example $2$ using the SGIPSM. Figure (a) shows the exact and approximate solution on $[0,1]$ obtained using $n = 3$ and $\alpha=0.8$. Figure (b) shows the corresponding AE in log-lin scale. Both figures were generated using 11 linearly spaced nodes from 0 to 1. Figure (c) shows the MAE for $n = 2^{k}, k = 2(1)7$ and $\alpha = -0.4(0.1)2$ with logarithmic scale on the $z$ axis.}                
\label{fig2}
\end{figure}
\begin{table}[!ht]
\centering
\caption{The REs of the Cubic spline method \cite{kanth2007cubic} and the SGIPSM} 
\begin{tabular}{c c c}
\toprule
$x$ & \textbf{Cubic spline method} \cite{kanth2007cubic}  & \textbf{SGIPSM}  \\
&$n=50$ & {$n=7, \alpha=1.4$} \\
\midrule
  0  & 2.2341e-006 & 1.1021e-007 \\      
 0.1 & 2.1616e-006 & 3.7718e-009 \\    
 0.2 & 1.9619e-006 & 1.7350e-008 \\
 0.3 & 1.6583e-006 & 8.4623e-008 \\
 0.4 & 1.2894e-006 & 3.6471e-008 \\
 0.5 & 9.0037e-007 & 1.6719e-008 \\
 0.6 & 5.3675e-007 & 3.4350e-008 \\
 0.7 & 2.3891e-007 & 7.9305e-008 \\
 0.8 & 3.8253e-008 & 4.9866e-009 \\
 0.9 & 4.4620e-008 & 1.9892e-009 \\
 \midrule
AE$_1$ &  1.3323e-015 &    4.4409e-016 \\
\bottomrule
\end{tabular}
\label{table2.1}    
 \end{table}
\begin{table}[!ht]
\centering
\caption{The MAEs of \cite{turkyilmazoglu2013effective}'s method and the SGIPSM.} 
\begin{tabular}{c c c}
\toprule
$n$ & \textbf{\cite{turkyilmazoglu2013effective}'s method} & \textbf{SGIPSM}\\
& & {MAE/($\alpha$)} \\
\midrule
  3  & 1.1100e-003 & 3.9711e-004/($\alpha=0.8$)   \\      
  6  & 5.5622e-006 & 1.7118e-006/($\alpha=-0.1$)  \\    
  8  & 5.2440e-008 & 2.6347e-008/($\alpha=0.8$)   \\
\midrule
\end{tabular}
\label{table2.2}   
 \end{table}

\noindent \textbf{Example 3.} Consider Problem 2 with $\alpha_{1}=0, \alpha_{2}=2, \beta=1, \gamma=0, \delta=5.5, b=1, p(x)=-4$, and $g(x)=-2$. The exact solution is $y(x)=0.5 + 5 \sinh(2x)/(x \sinh(2))$. Figure \ref{fig3} shows the plots of the exact solution, approximate solution, AE, MAE, and $\kappa_{\infty}$ obtained by the SGIPSM for several values of $n$ and $\alpha$. \cite{kanth2005cubic} solved the problem by modifying the original differential equation \eqref{eq:4} at the singular point $x=0$ via L'H\^{o}pital's rule, and then collocating the modified differential equation at equispaced grid points using a cubic spline method. Such a numerical scheme discretized the problem into a tridiagonal system of equations that was solved by Thomas algorithm. Table \ref{table3.1} shows the preference of the proposed SGIPSM against the method of \cite{kanth2005cubic} in terms of higher accuracy and reduced computational cost.


\begin{figure}[H]
\centering
\includegraphics[width=11cm, height=11cm]{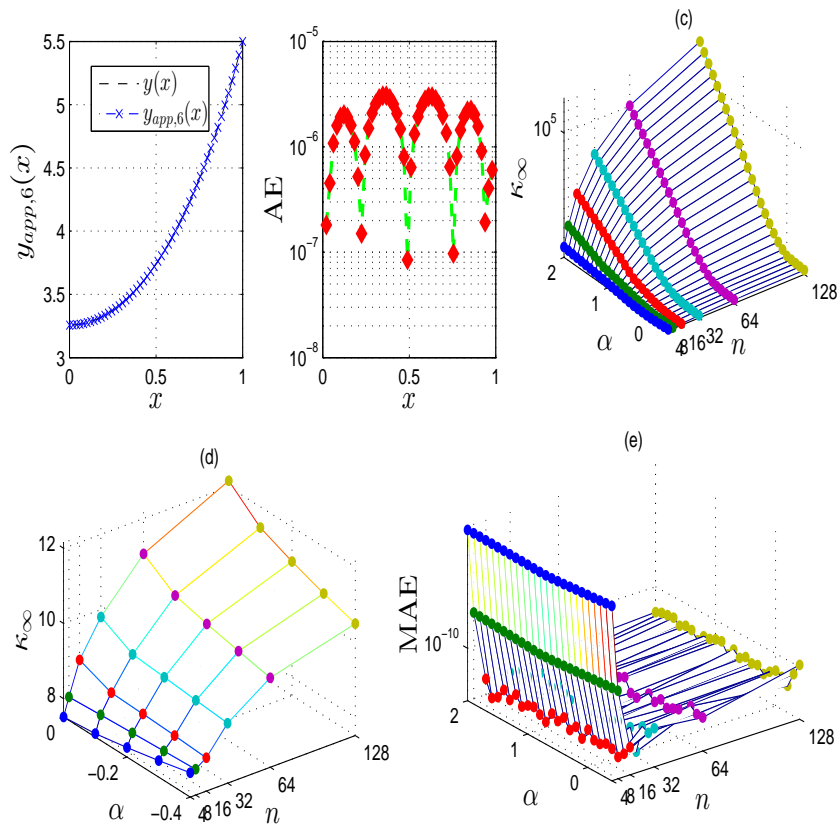}
\caption{The numerical simulation of Example $3$ using the SGIPSM. Figure (a) shows the exact and approximate solution on $[0,1]$ obtained using $n = 6$ and $\alpha=-0.2$. Figure (b) shows the corresponding AE in log-lin scale. Both figures were generated using 50 linearly spaced nodes from 0 to 1. Figure (c) shows $\kappa_{\infty}$ for $n = 2^{k}, k = 2(1)7$ and $\alpha = -0.4(0.1)2$. Figure (d) shows further $\kappa_{\infty}$ for $\alpha = -0.4(0.1)0$ with logarithmic scale on the $z$ axis. Figure (e) shows the MAE for $n = 2^{k}, k = 2(1)7$ and $\alpha = -0.4(0.1)2$ with logarithmic scale on the $z$ axis.}                                                                                                                                         
\label{fig3}
\end{figure}
  
 
  
\begin{table}[ht]
\centering
\caption{The REs of the cubic-spline method \cite{kanth2005cubic} and the SGIPSM.} 
\begin{tabular}{c c c}
\toprule
$x$ & \textbf{Cubic spline method} \cite{kanth2005cubic}  & \textbf{SGIPSM}\\
&{$n=20$}&{$n=6, \alpha=-0.2 $} \\
\midrule
 0.25  & -             & 1.4056e-008 \\
 0.05  & 8.9610e-005   & 2.2797e-007 \\
 0.075 & -             & 4.4184e-007 \\
 0.1   & 8.9087e-005   & 5.7829e-007 \\
 0.2   & 8.6627e-005   & 1.9466e-007 \\      
 0.3   & 8.2151e-005   & 7.0896e-007 \\    
 0.4   & 7.6256e-005   & 7.5662e-007 \\
 0.5   & 6.8275e-005   & 7.4427e-008 \\
 0.6   & 5.8753e-005   & 7.2706e-007 \\
 0.7   & 4.7165e-005   & 4.2395e-007 \\      
 0.8   & 3.3711e-005   & 3.3767e-007 \\    
 0.9   & 1.7861e-005   & 3.0010e-007 \\
 \midrule
AE$_1$ &  0  &      0      \\
\bottomrule
\end{tabular} 
\label{table3.1}   
 \end{table}
\noindent \textbf{Example 4.} Consider Problem 1 with $\alpha_{1}=0, \alpha_{2}=1, \beta=1, \gamma=0, \delta=2\ln\left(\left(4-2\sqrt{2}\right)/\left(7.75-4.5\sqrt{2}\right)\right), b=1.5$, and $f(x,y)=e^{y}$. The exact solution is $y(x)=2\ln\left((c+1) / \left(c x^{2}+1\right)\right)$ with $c=3-2\sqrt{2}$. Figure \ref{fig4} shows the plots of the exact solution, approximate solution, and AE on $[0,1.5]$ using $n = 5$ and $\alpha=0.9$, in addition to the MAE for $n = 2^{k}, k = 2(1)7$ and $\alpha = -0.4(0.1)2$. The REs of the SGIPSM using $n=5$ and $\alpha=0.9$ is shown in Table \ref{table4.1}. This problem was previously solved by \cite{khuri2010novel} using a decomposition method in combination with the cubic B-spline collocation technique. In particular, the former method is employed in the vicinity of the singularity $x = 0$, where the nonlinear term $f(x,y)$ is decomposed in terms of the Adomian polynomials, while the latter method is implemented outside this range. Table \ref{table4.2} shows however higher-order approximations in favor of the SGIPSM using several values of $n$.  
\begin{figure}[H]
\centering
\includegraphics[width=10cm, height=10cm]{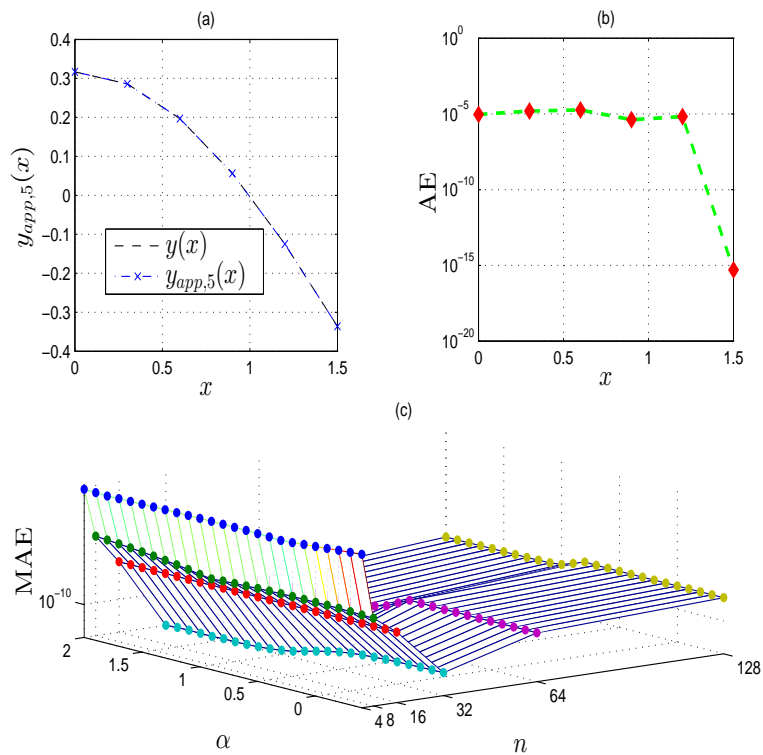}
\caption{The numerical simulation of Example $4$ using the SGIPSM. Figure (a) shows the exact and approximate solution on $[0,1.5]$ obtained using $n = 5$ and $\alpha=0.9$. Figure (b) shows the corresponding AE in log-lin scale. Both figures were generated using 6 linearly spaced nodes from 0 to 1.5. Figure (c) shows the MAE for $n = 2^{k}, k = 2(1)7$ and $\alpha = -0.4(0.1)2$ with logarithmic scale on the $z$ axis.}
\label{fig4}                                                                                      
\end{figure}


\begin{table}[!ht]
\centering
\caption{The REs of the SGIPSM using $n=5$.} 
\begin{tabular}{c c}
\toprule
$x$ &  \textbf{SGIPSM}\\
&  $ \alpha=0.9$ \\
\midrule
0   & 2.9203e-005  \\      
0.3 & 5.3820e-005  \\    
0.6 & 9.1514e-005  \\
0.9 & 7.3363e-005 \\
1.2 & 5.2870e-005  \\
\midrule
AE$_{1.5}$& 4.996e-016 \\
\bottomrule
\end{tabular}
\label{table4.1}
\end{table}  
\begin{table}[!ht]
\centering
\caption{The MAEs of \cite{khuri2010novel}'s method and the SGIPSM using $n=5(5)20$.} 
\begin{tabular}{c c c}
\toprule
$n$ & \textbf{\cite{khuri2010novel}'s method} & \textbf{SGIPSM} \\
& &{MAE/($\alpha$)} \\
\midrule
  5 & 2.37e-005  &  1.8012e-005/(0.9)\\      
 10 & 6.18e-006  &  1.3886e-009/(0.5)\\    
 15 & 3.03e-006  &  1.1013e-013 /(-0.1)\\
 20 & 1.56e-006  &  5.8287e-015 /(2)  \\
\midrule
\end{tabular}
\label{table4.2}   
 \end{table}
\noindent \textbf{Example 5.} Consider Problem 1 with $\alpha_{1}=-1, \alpha_{2}=2, \beta=0, \gamma=1, \delta=-1, b=1$, and $f(x,y)=\sin(y)-\cos(x)+2/x$. The exact solution is $y(x)=\pi/2-x$. Figure \ref{fig5} shows the plots of the exact solution, approximate solution, AE, and MAE on $[0,1]$ for several values of $n$ and $\alpha$. Table \ref{table5.1} verifies the strength of the proposed method as shown by the exceedingly accurate approximations obtained by the method using as small as $8$ collocation points.
\begin{figure}[H]
\centering
\includegraphics[width=11cm, height=11cm]{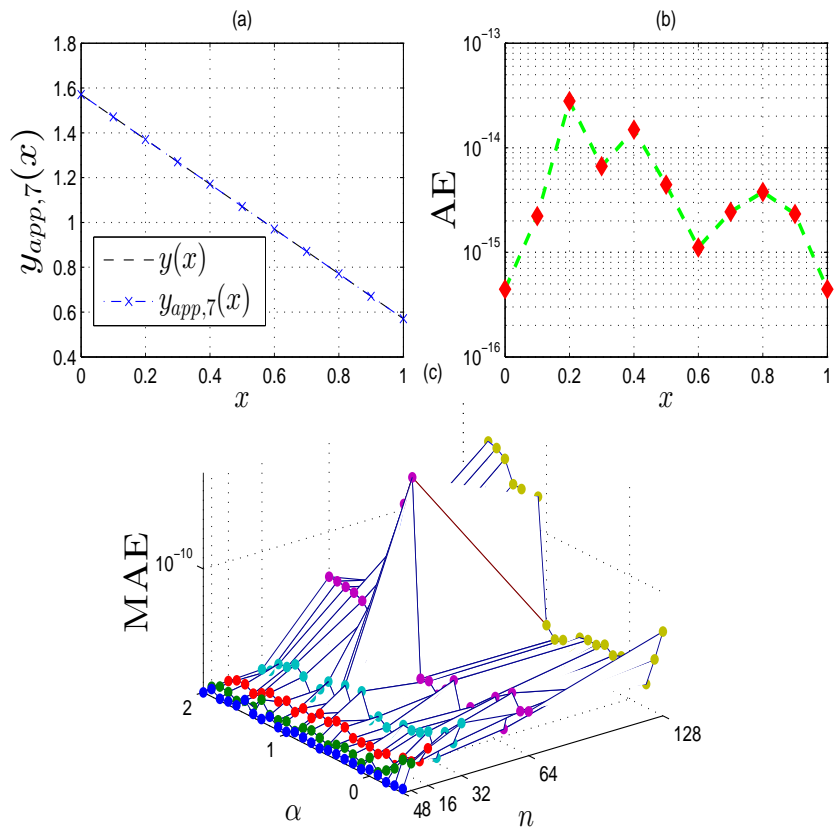}
\caption{The numerical simulation of Example $5$ using the SGIPSM. Figure (a) shows the exact and approximate solution on $[0,1]$ obtained using $n = 7$ and $\alpha=0.9$. Figure (b) shows the corresponding AE in log-lin scale. Both figures were generated using 11 linearly spaced nodes from 0 to 1. Figure (c) shows the MAE for $n = 2^{k}, k = 2(1)7$ and $\alpha = -0.4(0.1)2$ with logarithmic scale on the $z$ axis.}
\label{fig5}
\end{figure}
%
\begin{table}[!ht]
\centering
\caption{The REs of the SGIPSM using $n=7$.} 
\begin{tabular}{c c}
\toprule
$x$ &  \textbf{SGIPSM} \\
 & {$\alpha=0.9$} \\
\midrule
  0  & 2.8272e-016 \\      
 0.1 & 1.6607e-015 \\  
 0.2 & 2.0410e-014 \\
 0.3 & 5.2419e-015 \\
 0.4 & 1.2707e-014 \\
 0.5 & 4.1473e-015 \\
 0.6 & 1.1436e-015 \\
 0.7 & 2.8049e-015 \\
 0.8 & 5.0413e-015 \\  
 0.9 & 3.3102e-015 \\    
 \midrule
AE$_1$ & 4.4409e-016\\  
\bottomrule
\end{tabular}
 \label{table5.1}
 \end{table}  
%
\section{Conclusion and discussion}
\label{Conc}
In this work, we introduced the SGIPSM: a novel, highly accurate, exponentially convergent, and efficient numerical method for solving singular linear and nonlinear Lane-Emden equations provided with certain mixed Neumann and Robin boundary conditions. The proposed method exploits the well conditioning of numerical integral operators by working on the integral formulation of the problem. The reduced problem is then collocated at the SFGGR nodes, 
and the definite integrals are approximated using some novel SFGGR-based shifted Gegenbauer integration matrices. Such numerical integration operators are constant operators that can be stored for certain sets of SFGGR points, and invoked later when implementing the proposed computational algorithms. The numerical scheme eventually discretizes the problem into a system of linear/nonlinear equations that can be solved using standard linear/nonlinear system solvers. The exponential convergence of the SGIPSM is verified theoretically through Theorem \ref{th:9} and Corollary \ref{cor:101}. Moreover, estimates of the residual error bounds were provided by Theorems \ref{th:9new} and \ref{th:9newk1} and Corollaries \ref{cor:3} and \ref{cor:4}. Five test examples were solved to assess the power of the SGIPSM. In all test examples, the exponential convergence of the SGIPSM is clearly demonstrated, where higher-order approximations are achieved using relatively small numbers of collocation nodes. Moreover, the MAE of the method generally approaches the machine epsilon at either $n = 16$ or $n = 32$, and then degrades 
slightly for increasing values of $n$ due to the presence of round-off errors. Adopting the SFGGR nodes for collocating Problems 1 and 2 permits for approximating the solution function $y$ at $x = b$ very accurately. The numerical simulations of Examples 1 and 3 manifest that $\kappa_{\infty}$ increases monotonically for increasing values of $n$ with a swelling rate of increase for increasing positive values of $\alpha$. For instance, the value of $\kappa_{\infty}$ of Example 1 jumps from about $4$ at $\alpha = -0.4$ and $n = 128$ into more than $10^5$ at $\alpha = 2$ for the same value of $n$. A similar pattern is also observed in Example 3, where the value of $\kappa_{\infty}$ surges from about $10$ at $\alpha = -0.4$ and $n = 128$ into more than $10^6$ at $\alpha = 2$ for the same value of $n$. Therefore, for very large values of $n$, we would expect a significant drop in the precision of the approximate solutions for increasing positive values of $\alpha$-- a result that is reflected by the theoretical aftermath of Eqs. \eqref{eq:erruppb1} and \eqref{eq:erruppb2}. We draw the attention of the reader also to the significant observation pointed out by \cite{elgindy2013optimal} that the Gegenbauer quadratures employed in the discretization process `may become sensitive to round-off errors
for positive and large values of the parameter $\alpha$ due to the narrowing effect of the Gegenbauer weight function,' which drives the quadratures to become more extrapolatory and possibly give rise to poor integral approximations. On the other hand, we can infer from both examples that the resulting linear algebraic systems remain well conditioned as $n$ tends to infinity for $\alpha \in [-0.4,0]$ with a general monotonic increase of $\kappa_{\infty}$ for increasing values of $n$ and $\alpha$. That is, Gegenbauer basis polynomials associated with negative values of $\alpha$ generally lead to more well conditioned linear systems than their rival Chebyshev and Legendre bases polynomials as $n$ tends to infinity. We observe though that $\kappa_{\infty}$ remains plausible for $\alpha \in [-0.4,1]$ in the range $n \le 16$. In fact, if we denote the interval $[-1/2 + \varepsilon, r]$ by ${I_{\varepsilon,r}^G}$, for some relatively small positive number $\varepsilon$ and $r \in [1,2]$, then we closely follow the rule of thumb recommended by \cite{elgindy2016a} in the sense of generally performing collocations for values of $\alpha \in {I_{\varepsilon,r}^G}$, for small/medium numbers of collocation points and shifted Gegenbauer expansion terms; however, collocations at the shifted flipped-Chebyshev-Gauss-Radau points should be put into effect for large numbers of collocation points and shifted Gegenbauer expansion terms if the approximations are sought in the infinity norm (Chebyshev norm). Numerical comparisons with other traditional methods shown in Tables \ref{table1.2}--\ref{table3.1}, and \ref{table4.2} prove the preference of the proposed method over traditional methods in the literature with respect to the accuracy of the calculated approximations, convergence rate, and the employed number of collocation points. The presented SGIPSM is easily programmed, and can be extended to solve various problems in many areas of science and applications.         
%
\appendix
\section{Computational algorithms}
\label{app:Alg1}
\begin{algorithm}
\renewcommand{\thealgorithm}{A.1}
\caption{The SGIPSM for solving Problem 1 under Assumption $1$}
\label{algorithm:1}
\begin{algorithmic}[1]
\State Input: Real numbers $\alpha_1, \alpha_2, \beta, \gamma, \delta$; a positive real number $b$; a real-valued function $f$; $\bm{x}_{b,n}^{(\alpha )} \in \mathbb{S}_{b,n}^{(\alpha )}$, for some positive integer $n$; ${\mathbf{Q}}_b^{(1)}; {\mathbf{Q}}_b^{(2)}$; solution points vector $\bm{z} = (z_i) \in \mathbb{R}^m: z_i \in (0, b], i = 0, \ldots, m$, for some $m \in \mathbb{Z}^{+}_0$.
\State Output: Approximate solution vector $y_{\text{app},n}\left(\bm{z}\right)$.\quad \Comment{$y_{\text{app},n}(0)$ can be computed using Eq. \eqref{eq:51}.}
\State Calculate $\mathbf{H}_{b,n}^{(\alpha)}, \bar{\bm{x}}_{b,n}^{(\alpha)}$, and $\mathbf{\Theta }_{b,n}$ using Eqs. \eqref{eq:53}-\eqref{eq:55}.
\State Solve the nonlinear system \eqref{eq:52} for $\bm{\Phi}$.
\State Calculate the approximate solution vector ${y_{\text{app},n}}\left( {{\bm{x}}_{b,n}^{(\alpha )}} \right)$ using Eq. \eqref{eq:56}.
\State Calculate ${y_{\text{app},n}}\left( \bm{z} \right)$ using Eq. \eqref{eq:57}; Stop.
\end{algorithmic}
\end{algorithm}
\begin{algorithm}
\renewcommand{\thealgorithm}{A.2}
\caption{The SGIPSM for solving Problem 2 under Assumption $1$}
\label{algorithm:2}
\begin{algorithmic}[1]
\State Input: Real numbers $\alpha_1, \alpha_2, \beta, \gamma, \delta$; a positive real number $b$; real-valued functions $p, g$; $\bm{x}_{b,n}^{(\alpha )} \in \mathbb{S}_{b,n}^{(\alpha )}$, for some positive integer $n$; ${\mathbf{Q}}_b^{(1)}; {\mathbf{Q}}_b^{(2)}$; solution points vector $\bm{z} = (z_i) \in \mathbb{R}^m: z_i \in (0, b], i = 0, \ldots, m$, for some $m \in \mathbb{Z}^{+}_0$.
\State Output: Approximate solution vector $y_{\text{app},n}\left(\bm{z}\right)$.\quad \Comment{$y_{\text{app},n}(0)$ can be computed using Eq. \eqref{eq:51}.}
\State Calculate $\mathbf{H}_{b,n}^{(\alpha)}, \bar{\bm{x}}_{b,n}^{(\alpha)}$, and $\mathbf{\Theta }_{b,n}$ using Eqs. \eqref{eq:53}-\eqref{eq:55}.
\State Calculate the coefficient matrix $\mathcal{A}$ and the constant vector $\mathcal{B}$ using Eqs. \eqref{eq:CoeffAB1} and \eqref{eq:CoeffAB2}, respectively.
\State Solve the linear algebraic system \eqref{eq:58} for $\bm{\Phi}$.
\State Calculate the approximate solution vector ${y_{\text{app},n}}\left( {{\bm{x}}_{b,n}^{(\alpha )}} \right)$ using Eq. \eqref{eq:56}.
\State Calculate ${y_{\text{app},n}}\left( \bm{z} \right)$ using Eq. \eqref{eq:57}; Stop.
\end{algorithmic}
\end{algorithm}
\begin{algorithm}
\renewcommand{\thealgorithm}{A.3}
\caption{The SGIPSM for solving Problem 1 under Assumption $2$}
\label{algorithm:3}
\begin{algorithmic}[1]
\State Input: Real numbers $\alpha_1, \alpha_2, \gamma, \delta$; a positive real number $b$; a real-valued function $f$; $\bm{x}_{b,n}^{(\alpha )} \in \mathbb{S}_{b,n}^{(\alpha )}$, for some positive integer $n$; ${\mathbf{Q}}_b^{(1)}; {\mathbf{Q}}_b^{(2)}$; solution points vector $\bm{z} = (z_i) \in \mathbb{R}^m: z_i \in (0, b], i = 0, \ldots, m$, for some $m \in \mathbb{Z}^{+}_0$.
\State Output: Approximate solution vector $y_{\text{app},n}\left(\bm{z}\right)$.
\State Calculate $\mathbf{H}_{b,n}^{(\alpha)}, {\mathbf{\hat H}}_{b,n}^{(\alpha )}, \hat {f}_{b,n}^{(\alpha )}(\bm{\Phi})$, and ${\bm{\hat x}}_{b,n}^{(\alpha )}$ using Eqs. \eqref{eq:53}, \eqref{eq:hatH1}-\eqref{hatx1}.
\State Solve the nonlinear system \eqref{eq:9new} for $\bm{\Psi} = \left[\bm{\Phi};y_{\text{app},n}(0)\right]$.
\State Calculate the approximate solution vector ${y_{\text{app},n}}\left( {{\bm{x}}_{b,n}^{(\alpha )}} \right)$ using Eq. \eqref{eq:14new}.
\State Calculate ${y_{\text{app},n}}\left( \bm{z} \right)$ using Eq. \eqref{eq:57}; Stop.
\end{algorithmic}
\end{algorithm}
\begin{algorithm}
\renewcommand{\thealgorithm}{A.4}
\caption{The SGIPSM for solving Problem 2 under Assumption $2$}
\label{algorithm:4}
\begin{algorithmic}[1]
\State Input: Real numbers $\alpha_1, \alpha_2, \gamma, \delta$; a positive real number $b$; real-valued functions $p, g$; $\bm{x}_{b,n}^{(\alpha )} \in \mathbb{S}_{b,n}^{(\alpha )}$, for some positive integer $n$; ${\mathbf{Q}}_b^{(1)}; {\mathbf{Q}}_b^{(2)}$; solution points vector $\bm{z} = (z_i) \in \mathbb{R}^m: z_i \in (0, b], i = 0, \ldots, m$, for some $m \in \mathbb{Z}^{+}_0$.
\State Output: Approximate solution vector $y_{\text{app},n}\left(\bm{z}\right)$.
\State Calculate $\mathbf{H}_{b,n}^{(\alpha)}$ and ${\bm{\hat x}}_{b,n}^{(\alpha )}$ using Eqs. \eqref{eq:53} and \eqref{hatx1}, respectively.
\State Calculate the coefficient matrix ${\mathcal{C}}$ and the constant vector ${\mathcal{D}}$ using Eqs. \eqref{eq:C1}-\eqref{eq:g1}.
\State Solve the linear algebraic system \eqref{eq:58newC1} for $\bm{\Psi} = \left[\bm{\Phi};y_{\text{app},n}(0)\right]$.
\State Calculate the approximate solution vector ${y_{\text{app},n}}\left( {{\bm{x}}_{b,n}^{(\alpha )}} \right)$ using Eq. \eqref{eq:14new}.
\State Calculate ${y_{\text{app},n}}\left( \bm{z} \right)$ using Eq. \eqref{eq:57}; Stop.
\end{algorithmic}
\end{algorithm}


\begin{thebibliography}{57}
\expandafter\ifx\csname natexlab\endcsname\relax\def\natexlab#1{#1}\fi
\providecommand{\bibinfo}[2]{#2}
\ifx\xfnm\relax \def\xfnm[#1]{\unskip,\space#1}\fi
\bibitem[{Chandrasekhar (1958)}]{chandrasekhar1958introduction}
\bibinfo{author}{S.~Chandrasekhar}, \bibinfo{title}{An introduction to the
  study of stellar structure}, volume~\bibinfo{volume}{2},
  \bibinfo{publisher}{Courier Corporation}, \bibinfo{year}{1958}.
\bibitem[{Harley and Momoniat (2007)}]{Harley2007}
\bibinfo{author}{C.~Harley}, \bibinfo{author}{E.~Momoniat},
\newblock \bibinfo{title}{Steady state solutions for a thermal explosion in a
  cylindrical vessel},
\newblock \bibinfo{journal}{Modern Physics Letters B} \bibinfo{volume}{21}
  (\bibinfo{year}{2007}) \bibinfo{pages}{831--841}.
\bibitem[{Singh et~al. (2009)}]{Singh2009}
\bibinfo{author}{O.~Singh}, \bibinfo{author}{R.~Pandey},
  \bibinfo{author}{V.~Singh},
\newblock \bibinfo{title}{An analytic algorithm of {Lane-Emden} type equations
  arising in astrophysics using modified homotopy analysis method},
\newblock \bibinfo{journal}{Computer Physics Communications}
  \bibinfo{volume}{180} (\bibinfo{year}{2009}) \bibinfo{pages}{1116--1124}.
\bibitem[{Kanth and Aruna (2010)}]{kanth2010he}
\bibinfo{author}{A.~R. Kanth}, \bibinfo{author}{K.~Aruna},
\newblock \bibinfo{title}{He's variational iteration method for treating
  nonlinear singular boundary value problems},
\newblock \bibinfo{journal}{Computers \& Mathematics with Applications}
  \bibinfo{volume}{60} (\bibinfo{year}{2010}) \bibinfo{pages}{821--829}.
\bibitem[{Y{\"u}zba{\c{s}}{\i} and Sezer (2013)}]{yuzbacsi2013improved}
\bibinfo{author}{{\c{S}}.~Y{\"u}zba{\c{s}}{\i}}, \bibinfo{author}{M.~Sezer},
\newblock \bibinfo{title}{An improved {B}essel collocation method with a
  residual error function to solve a class of {L}ane--{E}mden differential
  equations},
\newblock \bibinfo{journal}{Mathematical and Computer Modelling}
  \bibinfo{volume}{57} (\bibinfo{year}{2013}) \bibinfo{pages}{1298--1311}.
\bibitem[{Tohidi et~al. (2013)}]{tohidi2013new}
\bibinfo{author}{E.~Tohidi}, \bibinfo{author}{K.~Erfani},
  \bibinfo{author}{M.~Gachpazan}, \bibinfo{author}{S.~Shateyi},
\newblock \bibinfo{title}{A new tau method for solving nonlinear {Lane-Emden}
  type equations via {B}ernoulli operational matrix of differentiation},
\newblock \bibinfo{journal}{Journal of Applied Mathematics}
  \bibinfo{volume}{2013} (\bibinfo{year}{2013}).
\bibitem[{Abd-Elhameed (2015)}]{abd2015new}
\bibinfo{author}{W.~Abd-Elhameed},
\newblock \bibinfo{title}{New {G}alerkin operational matrix of derivatives for
  solving {Lane-Emden} singular-type equations},
\newblock \bibinfo{journal}{The European Physical Journal Plus}
  \bibinfo{volume}{130} (\bibinfo{year}{2015}) \bibinfo{pages}{1--12}.
\bibitem[{Wazwaz et~al. (2016)}]{Wazwaz2016}
\bibinfo{author}{A.-M. Wazwaz}, \bibinfo{author}{R.~Rach},
  \bibinfo{author}{J.-S. Duan},
\newblock \bibinfo{title}{Variational iteration method for solving oxygen and
  carbon substrate concentrations in microbial floc particles},
\newblock \bibinfo{journal}{Match} \bibinfo{volume}{76} (\bibinfo{year}{2016})
  \bibinfo{pages}{511--523}.
\bibitem[{Scott et~al. (1984)}]{Scott1984}
\bibinfo{author}{S.~Scott}, \bibinfo{author}{T.~Boddington},
  \bibinfo{author}{P.~Gray},
\newblock \bibinfo{title}{Analytical expressions for effectiveness factors in
  non-isothermal spherical catalysts},
\newblock \bibinfo{journal}{Chemical Engineering Science} \bibinfo{volume}{39}
  (\bibinfo{year}{1984}) \bibinfo{pages}{1079--1085}.
\bibitem[{Biles et~al. (2002)}]{Biles2002}
\bibinfo{author}{D.~Biles}, \bibinfo{author}{M.~Robinson},
  \bibinfo{author}{J.~Spraker},
\newblock \bibinfo{title}{A generalization of the {Lane-Emden} equation},
\newblock \bibinfo{journal}{Journal of Mathematical Analysis and Applications}
  \bibinfo{volume}{273} (\bibinfo{year}{2002}) \bibinfo{pages}{654--666}.
\bibitem[{Mohan and Al-Bayaty (1980)}]{Mohan1980}
\bibinfo{author}{C.~Mohan}, \bibinfo{author}{A.~Al-Bayaty},
\newblock \bibinfo{title}{Power-series solutions of the {Lane-Emden} equation},
\newblock \bibinfo{journal}{Astrophysics and Space Science}
  \bibinfo{volume}{73} (\bibinfo{year}{1980}) \bibinfo{pages}{227--239}.
\bibitem[{Parks (1984)}]{Parks1984power}
\bibinfo{author}{A.~Parks}, \bibinfo{title}{Power Series Solutions for Slowly
  Rotating Polytropes Using the Method of Frobenius.}, \bibinfo{type}{Technical
  Report}, DTIC Document, \bibinfo{year}{1984}.
\bibitem[{Seidov and Kuzakhmedov (1977)}]{Seidov1977solution}
\bibinfo{author}{Z.~Seidov}, \bibinfo{author}{R.~K. Kuzakhmedov},
\newblock \bibinfo{title}{Solution of the {Lane-Emden} problem in series},
\newblock \bibinfo{journal}{Soviet Astronomy} \bibinfo{volume}{21}
  (\bibinfo{year}{1977}) \bibinfo{pages}{399--400}.
\bibitem[{Horedt (1986)}]{Horedt1986}
\bibinfo{author}{G.~Horedt},
\newblock \bibinfo{title}{Seven-digit tables of {Lane-Emden} functions},
\newblock \bibinfo{journal}{Astrophysics and Space Science}
  \bibinfo{volume}{126} (\bibinfo{year}{1986}) \bibinfo{pages}{357--408}.
\bibitem[{Sadler and Miller (1932)}]{Sadler1932}
\bibinfo{author}{D.~Sadler}, \bibinfo{author}{J.~Miller},
\newblock \bibinfo{title}{British association for the advancement of science,
  mathematical tables}  (\bibinfo{year}{1932}).
\bibitem[{Chandrasekhar and Wares (1949)}]{Chandrasekhar1949}
\bibinfo{author}{S.~Chandrasekhar}, \bibinfo{author}{G.~W. Wares},
\newblock \bibinfo{title}{The isothermal function},
\newblock \bibinfo{journal}{The Astrophysical Journal} \bibinfo{volume}{109}
  (\bibinfo{year}{1949}) \bibinfo{pages}{551--554}.
\bibitem[{Shawagfeh (1993)}]{Shawagfeh1993}
\bibinfo{author}{N.~Shawagfeh},
\newblock \bibinfo{title}{Nonperturbative approximate solution for {Lane-Emden}
  equation},
\newblock \bibinfo{journal}{Journal of Mathematical Physics}
  \bibinfo{volume}{34} (\bibinfo{year}{1993}) \bibinfo{pages}{4364--4369}.
\bibitem[{He (2003)}]{He2003}
\bibinfo{author}{J.-H. He},
\newblock \bibinfo{title}{Variational approach to the {Lane--Emden} equation},
\newblock \bibinfo{journal}{Applied Mathematics and Computation}
  \bibinfo{volume}{143} (\bibinfo{year}{2003}) \bibinfo{pages}{539--541}.
\bibitem[{Ramos (2004)}]{Ramos2004}
\bibinfo{author}{J.~Ramos},
\newblock \bibinfo{title}{Piecewise quasilinearization techniques for singular
  boundary-value problems},
\newblock \bibinfo{journal}{Computer Physics Communications}
  \bibinfo{volume}{158} (\bibinfo{year}{2004}) \bibinfo{pages}{12--25}.
\bibitem[{Parand and Razzaghi (2004)}]{Parand2004}
\bibinfo{author}{K.~Parand}, \bibinfo{author}{M.~Razzaghi},
\newblock \bibinfo{title}{Rational {L}egendre approximation for solving some
  physical problems on semi-infinite intervals},
\newblock \bibinfo{journal}{Physica Scripta} \bibinfo{volume}{69}
  (\bibinfo{year}{2004}) \bibinfo{pages}{353--357}.
\bibitem[{Momoniat and Harley (2006)}]{Momoniat2006}
\bibinfo{author}{E.~Momoniat}, \bibinfo{author}{C.~Harley},
\newblock \bibinfo{title}{Approximate implicit solution of a {Lane-Emden}
  equation},
\newblock \bibinfo{journal}{New Astronomy} \bibinfo{volume}{11}
  (\bibinfo{year}{2006}) \bibinfo{pages}{520--526}.
\bibitem[{Ert\"{u}rk (2007)}]{Erturk2007}
\bibinfo{author}{V.~Ert\"{u}rk},
\newblock \bibinfo{title}{Differential transformation method for solving
  differential equations of {Lane-Emden} type},
\newblock \bibinfo{journal}{Mathematical and Computational Applications}
  \bibinfo{volume}{12} (\bibinfo{year}{2007}) \bibinfo{pages}{135--139}.
\bibitem[{Dehghan and Shakeri (2008)}]{Dehghan2008}
\bibinfo{author}{M.~Dehghan}, \bibinfo{author}{F.~Shakeri},
\newblock \bibinfo{title}{Approximate solution of a differential equation
  arising in astrophysics using the variational iteration method},
\newblock \bibinfo{journal}{New Astronomy} \bibinfo{volume}{13}
  (\bibinfo{year}{2008}) \bibinfo{pages}{53--59}.
\bibitem[{Marzban et~al. (2008)Marzban, Tabrizidooz, and Razzaghi}]{Marzban2008}
\bibinfo{author}{H.~Marzban}, \bibinfo{author}{H.~Tabrizidooz},
  \bibinfo{author}{M.~Razzaghi},
\newblock \bibinfo{title}{Hybrid functions for nonlinear initial-value problems
  with applications to {Lane-Emden} type equations},
\newblock \bibinfo{journal}{Physics Letters, Section A: General, Atomic and
  Solid State Physics} \bibinfo{volume}{372} (\bibinfo{year}{2008})
  \bibinfo{pages}{5883--5886}.
\bibitem[{Yang and Hou (2010)}]{Yang2010}
\bibinfo{author}{C.~Yang}, \bibinfo{author}{J.~Hou},
\newblock \bibinfo{title}{A numerical method for {Lane-Emden} equations using
  {C}hebyshev polynomials and the collocation method},
\newblock in: \bibinfo{booktitle}{2010 International Conference on
  Computational and Information Sciences}, pp. \bibinfo{pages}{97--100}.
\bibitem[{Pandey and Kumar (2012)}]{Pandey2012}
\bibinfo{author}{R.~Pandey}, \bibinfo{author}{N.~Kumar},
\newblock \bibinfo{title}{Solution of {Lane-Emden} type equations using
  {B}ernstein operational matrix of differentiation},
\newblock \bibinfo{journal}{New Astronomy} \bibinfo{volume}{17}
  (\bibinfo{year}{2012}) \bibinfo{pages}{303--308}.
\bibitem[{Bhrawy and Alofi (2012)}]{bhrawy2012jacobi}
\bibinfo{author}{A.~Bhrawy}, \bibinfo{author}{A.~Alofi},
\newblock \bibinfo{title}{A {Jacobi-Gauss} collocation method for solving
  nonlinear {Lane--Emden} type equations},
\newblock \bibinfo{journal}{Communications in Nonlinear Science and Numerical
  Simulation} \bibinfo{volume}{17} (\bibinfo{year}{2012})
  \bibinfo{pages}{62--70}.
\bibitem[{\"{O}zt\"{u}rk and G\"{u}lsu (2014)}]{Ozturk2014}
\bibinfo{author}{Y.~\"{O}zt\"{u}rk}, \bibinfo{author}{M.~G\"{u}lsu},
\newblock \bibinfo{title}{An approximation algorithm for the solution of the
  {Lane-Emden} type equations arising in astrophysics and engineering using
  {H}ermite polynomials},
\newblock \bibinfo{journal}{Computational and Applied Mathematics}
  \bibinfo{volume}{33} (\bibinfo{year}{2014}) \bibinfo{pages}{131--145}.
\bibitem[{\v{S}marda and Khan (2015)}]{Smarda2015}
\bibinfo{author}{Z.~\v{S}marda}, \bibinfo{author}{Y.~Khan},
\newblock \bibinfo{title}{An efficient computational approach to solving
  singular initial value problems for {Lane-Emden} type equations},
\newblock \bibinfo{journal}{Journal of Computational and Applied Mathematics}
  \bibinfo{volume}{290} (\bibinfo{year}{2015}) \bibinfo{pages}{65--73}.
\bibitem[{Calvert et~al. (2016)}]{Calvert2016}
\bibinfo{author}{V.~Calvert}, \bibinfo{author}{S.~Mashayekhi},
  \bibinfo{author}{M.~Razzaghi},
\newblock \bibinfo{title}{Solution of {Lane-Emden} type equations using
  rational {B}ernoulli functions},
\newblock \bibinfo{journal}{Mathematical Methods in the Applied Sciences}
  \bibinfo{volume}{39} (\bibinfo{year}{2016}) \bibinfo{pages}{1268--1284}.
\bibitem[{Yousefi (2006)}]{Yousefi2006}
\bibinfo{author}{S.~Yousefi},
\newblock \bibinfo{title}{Legendre wavelets method for solving differential
  equations of {Lane-Emden} type},
\newblock \bibinfo{journal}{Applied Mathematics and Computation}
  \bibinfo{volume}{181} (\bibinfo{year}{2006}) \bibinfo{pages}{1417--1422}.
\bibitem[{Karimi~Vanani and Aminataei (2010)}]{KarimiVanani2010}
\bibinfo{author}{S.~Karimi~Vanani}, \bibinfo{author}{A.~Aminataei},
\newblock \bibinfo{title}{On the numerical solution of differential equations
  of {Lane-Emden} type},
\newblock \bibinfo{journal}{Computers and Mathematics with Applications}
  \bibinfo{volume}{59} (\bibinfo{year}{2010}) \bibinfo{pages}{2815--2820}.
\bibitem[{Wazwaz et~al. (2013)}]{wazwaz2013adomian}
\bibinfo{author}{A.-M. Wazwaz}, \bibinfo{author}{R.~Rach},
  \bibinfo{author}{J.-S. Duan},
\newblock \bibinfo{title}{Adomian decomposition method for solving the
  {V}olterra integral form of the {Lane-Emden} equations with initial values
  and boundary conditions},
\newblock \bibinfo{journal}{Applied Mathematics and Computation}
  \bibinfo{volume}{219} (\bibinfo{year}{2013}) \bibinfo{pages}{5004--5019}.
\bibitem[{Rach et~al. (2014)}]{Rach2014}
\bibinfo{author}{R.~Rach}, \bibinfo{author}{A.-M. Wazwaz},
  \bibinfo{author}{J.-S. Duan},
\newblock \bibinfo{title}{The {V}olterra integral form of the {Lane-Emden}
  equation: new derivations and solution by the {A}domian decomposition
  method},
\newblock \bibinfo{journal}{Journal of Applied Mathematics and Computing}
  \bibinfo{volume}{47} (\bibinfo{year}{2014}) \bibinfo{pages}{365--379}.
\bibitem[{Fornberg (1998)}]{fornberg1998}
\bibinfo{author}{B.~Fornberg}, \bibinfo{title}{A Practical Guide to
  Pseudospectral Methods}, Cambridge Monographs on Applied and Computational
  Mathematics, \bibinfo{publisher}{Cambridge University Press},
  \bibinfo{year}{1998}.
\bibitem[{Boyd (2001)}]{boyd2001chebyshev}
\bibinfo{author}{J.~P. Boyd}, \bibinfo{title}{Chebyshev and Fourier spectral
  methods}, \bibinfo{publisher}{Courier Corporation}, \bibinfo{year}{2001}.
\bibitem[{Mason and Handscomb (2002)}]{Mason2002}
\bibinfo{author}{J.~Mason}, \bibinfo{author}{D.~Handscomb},
  \bibinfo{title}{Chebyshev Polynomials}, \bibinfo{publisher}{CRC Press},
  \bibinfo{year}{2002}.
\bibitem[{Elgindy (2017)}]{Elgindy2017}
\bibinfo{author}{K.~T. Elgindy},
\newblock \bibinfo{title}{High-order, stable, and efficient pseudospectral
  method using barycentric {G}egenbauer quadratures},
\newblock \bibinfo{journal}{Applied Numerical Mathematics}
  \bibinfo{volume}{113} (\bibinfo{year}{2017}) \bibinfo{pages}{1--25}.
\bibitem[{Suh (2009)}]{Suh2009}
\bibinfo{author}{S.~Suh},
\newblock \bibinfo{title}{Pseudospectral methods for pricing options},
\newblock \bibinfo{journal}{Quantitative Finance} \bibinfo{volume}{9}
  (\bibinfo{year}{2009}) \bibinfo{pages}{705--715}.
\bibitem[{Ng and Yan (2003)}]{Ng2003}
\bibinfo{author}{K.~T. Ng}, \bibinfo{author}{R.~Yan},
\newblock \bibinfo{title}{Three-dimensional pseudospectral modelling of cardiac
  propagation in an inhomogeneous anisotropic tissue},
\newblock \bibinfo{journal}{Medical and Biological Engineering and Computing}
  \bibinfo{volume}{41} (\bibinfo{year}{2003}) \bibinfo{pages}{618--624}.
\bibitem[{Xu et~al. (2016)}]{Xu2016}
\bibinfo{author}{G.~Xu}, \bibinfo{author}{D.~Zhao}, \bibinfo{author}{P.~Liao},
  \bibinfo{author}{G.~Shi},
\newblock \bibinfo{title}{Optimal control of {UAV} elastic formation based on
  {L}egendre pseudospectral method},
\newblock in: \bibinfo{booktitle}{2016 Chinese Control and Decision Conference
  (CCDC)}, pp. \bibinfo{pages}{6389--6394}.
\bibitem[{Elgindy (2016)}]{Elgindy2016}
\bibinfo{author}{K.~T. Elgindy},
\newblock \bibinfo{title}{Optimal control of a parabolic distributed parameter
  system using a barycentric shifted {G}egenbauer pseudospectral method},
\newblock \bibinfo{journal}{arXiv preprint arXiv:1603.01517}
  (\bibinfo{year}{2016}).
\bibitem[{Xiao et~al. (2016)}]{Xiao2016}
\bibinfo{author}{L.~Xiao}, \bibinfo{author}{X.~Liu}, \bibinfo{author}{S.~He},
\newblock \bibinfo{title}{An adaptive pseudospectral method for constrained
  dynamic optimization problems in chemical engineering},
\newblock \bibinfo{journal}{Chemical Engineering and Technology}
  \bibinfo{volume}{39} (\bibinfo{year}{2016}) \bibinfo{pages}{1884--1894}.
\bibitem[{Du (2016)}]{Du2016}
\bibinfo{author}{K.~Du},
\newblock \bibinfo{title}{On well-conditioned spectral collocation and spectral
  methods by the integral reformulation},
\newblock \bibinfo{journal}{SIAM Journal on Scientific Computing}
  \bibinfo{volume}{38} (\bibinfo{year}{2016}) \bibinfo{pages}{A3247--A3263}.
\bibitem[{Elgindy (2016)}]{elgindy2016high}
\bibinfo{author}{K.~T. Elgindy},
\newblock \bibinfo{title}{High-order numerical solution of second-order
  one-dimensional hyperbolic telegraph equation using a shifted {G}egenbauer
  pseudospectral method},
\newblock \bibinfo{journal}{Numerical Methods for Partial Differential
  Equations} \bibinfo{volume}{32} (\bibinfo{year}{2016})
  \bibinfo{pages}{307--349}.
\bibitem[{Elgindy and Smith-Miles (2013{\natexlab{a}})}]{elgindy2013solving}
\bibinfo{author}{K.~T. Elgindy}, \bibinfo{author}{K.~A. Smith-Miles},
\newblock \bibinfo{title}{Solving boundary value problems, integral, and
  integro-differential equations using {G}egenbauer integration matrices},
\newblock \bibinfo{journal}{Journal of Computational and Applied Mathematics}
  \bibinfo{volume}{237} (\bibinfo{year}{2013}{\natexlab{a}})
  \bibinfo{pages}{307--325}.
\bibitem[{Elgindy and Smith-Miles (2013{\natexlab{b}})}]{Elgindy2013}
\bibinfo{author}{K.~T. Elgindy}, \bibinfo{author}{K.~Smith-Miles},
\newblock \bibinfo{title}{Fast, accurate, and small-scale direct trajectory
  optimization using a {G}egenbauer transcription method},
\newblock \bibinfo{journal}{Journal of Computational and Applied Mathematics}
  \bibinfo{volume}{251} (\bibinfo{year}{2013}{\natexlab{b}})
  \bibinfo{pages}{93--116}.
\bibitem[{Greengard (1991)}]{Greengard1991spectral}
\bibinfo{author}{L.~Greengard},
\newblock \bibinfo{title}{Spectral integration and two-point boundary value
  problems},
\newblock \bibinfo{journal}{SIAM Journal on Numerical Analysis}
  \bibinfo{volume}{28} (\bibinfo{year}{1991}) \bibinfo{pages}{1071--1080}.
\bibitem[{Elgindy (2013)}]{Elgindy2013a}
\bibinfo{author}{K.~Elgindy}, \bibinfo{title}{Gegenbauer Collocation
  Integration Methods: Advances in Computational Optimal Control Theory}, Ph.D.
  thesis, School of Mathematical Sciences, Faculty of Science, Monash
  University, \bibinfo{address}{Australia--Victoria}, \bibinfo{year}{2013}.
\bibitem[{Keiner (2009)}]{Keiner2009computing}
\bibinfo{author}{J.~Keiner},
\newblock \bibinfo{title}{Computing with expansions in {G}egenbauer
  polynomials},
\newblock \bibinfo{journal}{SIAM Journal on Scientific Computing}
  \bibinfo{volume}{31} (\bibinfo{year}{2009}) \bibinfo{pages}{2151--2171}.
\bibitem[{Elgindy and Smith-Miles (2013)}]{elgindy2013optimal}
\bibinfo{author}{K.~T. Elgindy}, \bibinfo{author}{K.~A. Smith-Miles},
\newblock \bibinfo{title}{Optimal {G}egenbauer quadrature over arbitrary
  integration nodes},
\newblock \bibinfo{journal}{Journal of Computational and Applied Mathematics}
  \bibinfo{volume}{242} (\bibinfo{year}{2013}) \bibinfo{pages}{82--106}.
\bibitem[{Caglar and Caglar (2006)}]{caglar2006b}
\bibinfo{author}{N.~Caglar}, \bibinfo{author}{H.~Caglar},
\newblock \bibinfo{title}{B-spline solution of singular boundary value
  problems},
\newblock \bibinfo{journal}{Applied Mathematics and Computation}
  \bibinfo{volume}{182} (\bibinfo{year}{2006}) \bibinfo{pages}{1509--1513}.
\bibitem[{Kanth (2007)}]{kanth2007cubic}
\bibinfo{author}{A.~R. Kanth},
\newblock \bibinfo{title}{Cubic spline polynomial for non-linear singular
  two-point boundary value problems},
\newblock \bibinfo{journal}{Applied mathematics and computation}
  \bibinfo{volume}{189} (\bibinfo{year}{2007}) \bibinfo{pages}{2017--2022}.
\bibitem[{Turkyilmazoglu (2013)}]{turkyilmazoglu2013effective}
\bibinfo{author}{M.~Turkyilmazoglu},
\newblock \bibinfo{title}{Effective computation of exact and analytic
  approximate solutions to singular nonlinear equations of {Lane-Emden-Fowler}
  type},
\newblock \bibinfo{journal}{Applied Mathematical Modelling}
  \bibinfo{volume}{37} (\bibinfo{year}{2013}) \bibinfo{pages}{7539--7548}.
\bibitem[{Kanth and Reddy (2005)}]{kanth2005cubic}
\bibinfo{author}{A.~R. Kanth}, \bibinfo{author}{Y.~Reddy},
\newblock \bibinfo{title}{Cubic spline for a class of singular two-point
  boundary value problems},
\newblock \bibinfo{journal}{Applied Mathematics and Computation}
  \bibinfo{volume}{170} (\bibinfo{year}{2005}) \bibinfo{pages}{733--740}.
\bibitem[{Khuri and Sayfy (2010)}]{khuri2010novel}
\bibinfo{author}{S.~Khuri}, \bibinfo{author}{A.~Sayfy},
\newblock \bibinfo{title}{A novel approach for the solution of a class of
  singular boundary value problems arising in physiology},
\newblock \bibinfo{journal}{Mathematical and Computer Modelling}
  \bibinfo{volume}{52} (\bibinfo{year}{2010}) \bibinfo{pages}{626--636}.
\bibitem[{Elgindy (2016)}]{elgindy2016a}
\bibinfo{author}{K.~T. Elgindy},
\newblock \bibinfo{title}{High-order {G}egenbauer integral spectral element
  method for solving nonlinear optimal control problems},
\newblock \bibinfo{journal}{arXiv preprint arXiv:1608.00935}
  (\bibinfo{year}{2016}).

\end{thebibliography}

\end{document}